\newtheorem{dummy}{anything}[section]
\newtheorem{theorem}[dummy]{Theorem}
\newtheorem{lemma}[dummy]{Lemma}
\newtheorem{question}[dummy]{Question}
\newtheorem{proposition}[dummy]{Proposition}
\newtheorem{conjecture}[dummy]{Conjecture}
\theoremstyle{definition}
\newtheorem{definition}[dummy]{Definition}
\newtheorem{example}[dummy]{Example}
\newtheorem{remark}[dummy]{Remark}
\newtheorem*{acknowledgements}{Acknowledgements}
\begin{document}

\title{Nonexistence and existence of fillable contact structures on 3-manifolds}

\author{Fan Ding, Youlin Li and Zhongtao Wu}

\address{School of Mathematical Sciences and LMAM, Peking University, Beijing 100871, China}
\email{dingfan@math.pku.edu.cn}

\address{School of Mathematical Sciences, Shanghai Jiao Tong University, Shanghai 200240, China}
\email{liyoulin@sjtu.edu.cn}

\address{Department of Mathematics, The Chinese University of Hong Kong, Shatin, Hong Kong}
\email{ztwu@math.cuhk.edu.hk}

\subjclass[2000]{}

\begin{abstract}
In the first part of this paper, we construct infinitely many hyperbolic closed 3-manifolds which admit no symplectic fillable contact structure.  All these 3-manifolds are obtained by Dehn surgeries along L-space knots or L-space two-component links. In the second part of this paper, we show that Dehn surgeries along certain knots and links, including those considered in the first part, admit Stein fillable contact structures as long as the surgery coefficients are sufficiently large. This provides some new evidence for the high surgery conjecture raised by Stipsicz.
\end{abstract}

\maketitle

\section{Introduction}\label{sec: intro}
A contact 3-manifold is either tight or overtwisted. Given a contact 3-manifold $(Y,\xi)$, it is a fundamental question to ask whether it is tight or overtwisted. If a contact 3-manifold is weakly symplectic fillable, then it is tight. If a contact 3-manifold $(Y,\xi)$ is Stein fillable, then it is strongly symplectic fillable, and hence weakly symplectic fillable. The reader may refer to \cite{ge} for these definitions.

In this paper, we are concerned with the existence and non-existence of the various kind of contact structures on 3-manifolds.
\begin{question}\label{Q1}
Given an irreducible closed oriented 3-manifold,\\
(1) does it admit a tight contact structure? \\
(2) does it admit a (weakly or strongly) symplectic fillable contact structure? \\
(3) does it admit a Stein fillable contact structure?
\end{question}

Suppose $Y$ is an irreducible closed oriented 3-manifold. If $b_{1}(Y)>0$, then $Y$ admits a taut foliation (cf. \cite{et}). By \cite[Corollary 3.2.5]{et}, the taut foliation can be perturbed to a weakly symplectic semi-fillable, and hence a weakly symplectic fillable contact structure by gluing some symplectic caps \cite{e2,e, ge1}. If $b_{1}(Y)=0$, recall that a rational homology sphere is called an L-space if the rank of its Heegaard Floer homology $\widehat{HF}(Y)$ equals to the order of $H_{1}(Y)$.  If $Y$ is not an L-space, the L-space conjecture in \cite{BGW} says that $Y$ admits a taut foliation, and hence a weakly symplectic fillable contact structure. So the existence of (weakly or strongly) symplectic fillable contact structures on L-space is worth investigating. 

According to \cite{e0} and \cite{OhOn}, for a rational homology sphere, the existence of a weakly symplectic fillable contact structure is equivalent to the existence of a strongly symplectic fillable one. So, we only consider the existence of weakly symplectic fillable contact structure on a rational homology sphere. In the sequel a symplectic fillable contact structure always stand for a weakly symplectic fillable contact structure unless specified otherwise.

In \cite{LeLi}, Lecuona and Lisca classified the Seifert fibered spaces which admit no fillable contact structure. In particular, they show that a Seifert fibered space admits a symplectic fillable contact structure if and only if it admits a Stein fillable contact structure.

In \cite{KaTo} and \cite{liliu}, some hyperbolic Dehn surgeries along the pretzel knot $P(-2,3,2n+1)$, $n\geq3$, are found to admit no symplectic fillable contact structures. In this paper, we consider Dehn surgeries along closed 3-braids which are L-space knots.  Recall that a knot $K$ in $S^3$ is an L-space knot if some positive Dehn surgery along $K$ yields an L-space.

A twisted torus knot $T(u,v;u',v')$ is a knot which can be obtained by adding $v'$ full twists along $u'$ adjacent strands to the torus knot $T(u,v)$, where $2\leq u'\leq u-1$. Suppose $n\geq 2$ and $m\geq 1$. Let $K_{n,m}$ be the twisted torus knot $T(3,3m+2;2,n-2)$, and $K'_{n,m}$ be the twisted torus knot $T(3,3m+1;2,n-2)$. See Figure~\ref{figure:Knm}. Note that  $K_{n,1}$ is the pretzel knot $P(-2,3,2n+1)$ \cite{t}, while both $K_{2,m}$ and $K'_{2,m}$ are positive torus knots. By the main result of \cite{LeeVa}, a closed 3-braid which is an L-space knot must have the type of $K_{n,m}$ or $K'_{n,m}$.

\begin{figure}[htb]
\begin{overpic}
{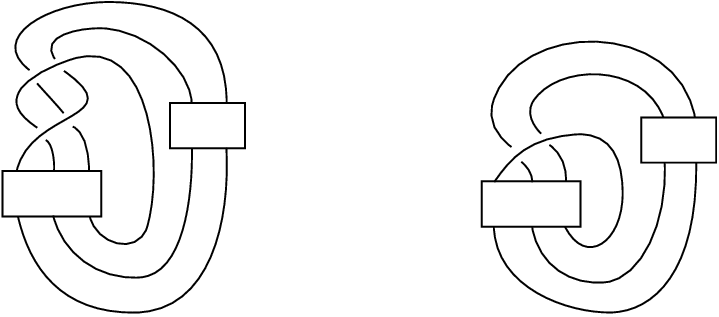}
\put(20, 57){$m$}
\put(86, 89){$n-2$}
\put(250, 52){$m$}
\put(314, 82){$n-2$}
\end{overpic}
\caption{The left picture is the twisted torus knot $K_{n,m}$. The right picture is the twisted torus knot $K'_{n,m}$. The boxes represent $m$ full right handed twists and $n-2$ full right handed twists.}
\label{figure:Knm}
\end{figure}

Our first main result concerns surgeries on these braid-index-three L-space knots that admit no symplectic fillable contact structures. It is an extension of the results in \cite{KaTo} and \cite{liliu}.

\begin{theorem}\label{Theorem:Main1}
Suppose $K$ is a closed 3-braid which is an L-space knot, then the rational  $r$-surgery  along $K$ yields a 3-manifold admitting no symplectic fillable contact structure for $r\in[2g(K)-1, 2g(K)]$, where $g(K)$ is the genus of $K$. Precisely speaking, the 3-manifold $S^{3}_{r}(K_{n,m})$ (resp. $S^{3}_{r}(K'_{n,m})$) admits no symplectic fillable contact structures for $r\in[2n+6m-3, 2n+6m-2]$ (resp. $r\in[2n+6m-5, 2n+6m-4]$).
\end{theorem}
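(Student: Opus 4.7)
My approach would follow and extend the strategy of Kaloti--Tosun \cite{KaTo} and Li--Liu \cite{liliu}, who handled the subfamily $K_{n,1} = P(-2,3,2n+1)$. The idea is to assume a symplectic filling of $Y = S^{3}_{r}(K)$ and derive a contradiction from Heegaard Floer correction terms.

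First, I would verify that each such $Y$ is an L-space. Since $K_{n,m}$ and $K'_{n,m}$ are L-space knots and the slope satisfies $r \geq 2g(K)-1$, the rational surgery theorem of Ozsv\'ath--Szab\'o produces an L-space. The Seifert genera can be recorded as $g(K_{n,m}) = n + 3m - 1$ and $g(K'_{n,m}) = n + 3m - 2$, which match the stated surgery ranges. Being a rational homology sphere, weakly and strongly symplectic fillability coincide by \cite{e0,OhOn}, so it suffices to rule out strong symplectic fillings.

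Next I would compute the $d$-invariants of $Y$ across all Spin$^{c}$ structures using the Ozsv\'ath--Szab\'o rational surgery formula for L-space knots, which expresses them in terms of the torsion coefficients $t_{j}(K)$ determined by the Alexander polynomial $\Delta_{K}(t)$. The Alexander polynomial of a twisted torus knot $T(3,3m+2;2,n-2)$ or $T(3,3m+1;2,n-2)$ can be extracted directly from its positive $3$-braid presentation; I expect either a clean closed form or at least a structure amenable to induction on the twist parameter $m$, with the $m=1$ slice reproducing the known pretzel-knot computation.

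Finally, assuming a strong symplectic filling $(W,\omega)$, I would apply the Eliashberg--Etnyre symplectic cap construction and reduce to an obstruction of the form
\begin{equation*}
c_{1}(\mathfrak{s})^{2} + b_{2}^{-}(W') \leq 4\, d(Y, \mathfrak{s}|_{Y})
\end{equation*}
for a suitable negative-definite piece $W'$ of the capped manifold. The goal is then to show that for every Spin$^{c}$ structure $\mathfrak{s}_{\xi}$ that could be the restriction of a filling, and every characteristic element in the relevant intersection lattice, the $d$-invariants computed above force a violation of this inequality, thereby precluding a filling.

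The main obstacle will be treating the two-parameter families $(n,m)$ uniformly. While $m = 1$ recovers the pretzel case already in the literature, as $m$ grows the torsion coefficients become more intricate, and one must either find a single \emph{witness} Spin$^{c}$ structure whose $d$-invariant alone suffices to rule out fillings for all $(n,m)$, or argue by induction on $m$ via the recursion satisfied by $\Delta_{K}$ under the addition of a full twist on two strands. The careful bookkeeping of torsion coefficients, together with verifying the numerical obstruction over the entire parameter range and both surgery coefficients $r \in \{2g(K)-1, 2g(K)\}$ (and rational slopes in between), is where I expect the real work to lie.
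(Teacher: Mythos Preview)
Your overall strategy is sound---verify the surgeries are L-spaces, compute $d$-invariants from the Alexander polynomial via torsion coefficients, then obstruct negative-definite fillings---and this is indeed what the paper does. However, your plan diverges from the paper's proof in two substantive ways, and one of them would cost you real effort.

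First, the paper does \emph{not} analyze both endpoints $2g(K)-1$, $2g(K)$ and the rational slopes in between separately. It computes the $d$-invariants only at the single integer slope $p=2g(K)$, shows they are all negative there, and then invokes the Owens--Strle cobordism lemma (Lemma~\ref{lemma:negdefcob}): for $0<s<r$ there is a negative-definite $2$-handle cobordism from $S^{3}_{s}(K)$ to $S^{3}_{r}(K)$. Thus if $S^{3}_{2g}(K)$ fails to bound a negative-definite $4$-manifold, so does $S^{3}_{r}(K)$ for every $r\in[2g-1,2g]$. This reduces the entire interval to one integer computation and is the step your proposal is missing.

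Second, your formulation in terms of a ``witness Spin$^{c}$ structure'' is backwards. The obstruction (Proposition~\ref{proposition: d0}, due to Owens--Strle) requires $\max_{\mathfrak{t}} d(Y,\mathfrak{t})<0$; a single negative $d$-invariant proves nothing. The paper therefore shows directly that \emph{every} $d(Y,i)<0$ for $0\le i\le g(K)$, splitting into the ranges $0\le i\le n-2$ and $n-1\le i\le g(K)$ and using the explicit torsion coefficients computed from the Burau representation of the $3$-braid. There is no induction on $m$ and no cap construction; once the torsion coefficients are tabulated, each inequality $d(Y,i)<0$ reduces to an elementary algebraic check. Your proposed route through symplectic caps and characteristic-element inequalities would eventually recover the same Owens--Strle criterion, but it is unnecessary scaffolding here: Theorem~\ref{thm: negdefinite} already forces any symplectic filling of an L-space to be negative-definite, so Proposition~\ref{proposition: d0} applies immediately.
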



\begin{remark}
By \cite[Corollary 1.2]{Lee}, both $K_{n,m}$ and $K'_{n,m}$ are hyperbolic knots whenever $n\geq4$. So if $n\geq4$ then $S^{3}_{r}(K_{n,m})$ (resp. $S^{3}_{r}(K'_{n,m})$) is a hyperbolic 3-manifold for all but finitely many $r\in[2n+6m-3, 2n+6m-2]$ (resp. $r\in[2n+6m-5, 2n+6m-4]$). The genus of $K_{n,m}$ is $n+3m-1$.  The maximal Thurston-Bennequin invariant of  $K_{n,m}$ is $2n+6m-3$ which equals to $2g(K_{n,m})-1$. The genus of $K'_{n,m}$ is $n+3m-2$. The maximal Thurston-Bennequin invariant of  $K'_{n,m}$ is $2n+6m-5$ which equals to $2g(K'_{n,m})-1$. By the main result in \cite{ls3},  if $K$ is either $K_{n,m}$ or $K'_{n,m}$, and $r>2g(K)-1$, then  $S^{3}_{r}(K)$ admits tight contact structures. However, to the authors' knowledge, it is still unknown whether the $(2g(K)-1)$-surgery along such a knot $K$ admits a tight contact structure.
\end{remark}


In fact, for any L-space knot $K\subset S^3$, one can use the approach in this paper and likely find more surgeries that yield 3-manifolds admitting no symplectic fillable contact structure.  Here we give a sufficient condition for the $(2g-1)$-surgery.

\begin{theorem}\label{Theorem:Main2}
Suppose $K\subset S^3$ is an L-space knot and $g(K)=g$. For $0\leq k\leq [\frac{g-1}{4}]+1$ and $k\in \mathbb{Z}$, let $$i_{k}=\min\{i\in\mathbb{Z}_{\geq0}\mid 2i> 2g-1-\sqrt{(8k+1)(2g-1)} \}.$$  If the torsion coefficients (defined by the formula (\ref{t_i})) $t_{i_{k}}(K)\geq k+1$ for all integers $k\in [0, [\frac{g-1}{4}]+1]$, then the manifold $S^{3}_{2g-1}(K)$ admits no symplectic fillable contact structure.
\end{theorem}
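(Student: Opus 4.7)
The proof proceeds by contradiction, following a strategy similar to that of Theorem \ref{Theorem:Main1}. Suppose that $Y := S^3_{2g-1}(K)$ admits a symplectic fillable contact structure $\xi$ with symplectic filling $(X, \omega)$. Since $K$ is an L-space knot and $p := 2g-1 \ge 2g(K)-1$, the manifold $Y$ is an L-space, so by Ozsv\'ath--Szab\'o the filling $X$ is negative-definite; also $b_1(X)=0$ as $Y$ is a rational homology sphere. Cap off $X$ with the reverse of the surgery trace to obtain a closed, negative-definite 4-manifold $Z := X \cup_Y (-W_p(K))$, where $W_p(K)$ is the standard $p$-framed 2-handle trace. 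Its intersection form is $Q_X \oplus (-p)$, and by Donaldson's Theorem A, $Q_Z \cong (-I)^n$ with $n := b_2(X) + 1$; under this identification the generator of the $(-p)$-summand corresponds to a primitive vector $e \in \mathbb{Z}^n$ with $\sum_j e_j^2 = p$.

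Every characteristic vector $v \in \mathbb{Z}^n$ of $(-I)^n$ (i.e.\ every $v_j$ odd) restricts to a Spin$^c$ structure $\mathfrak{s}_i$ on $Y$, where $i$ is determined by $v\cdot e \equiv p - 2i \pmod{2p}$ (taking the representative with $|v\cdot e| = |p-2i|$). Setting $L := -v^2 = \sum_j v_j^2$ one has $L \ge n$ and $L \equiv n \pmod 8$. Orthogonal projection of $v$ onto $\mathbb{Z}\langle e\rangle$ computes
\[
c_1(v|_X)^2 \;=\; v^2 + \frac{(v\cdot e)^2}{p}.
\]
Plugging this into the Ozsv\'ath--Szab\'o $d$-invariant inequality $c_1(v|_X)^2 + b_2(X) \le 4\,d(Y, \mathfrak{s}_i)$, together with the correction-term formula for L-space surgeries
\[
d(Y, \mathfrak{s}_i) \;=\; \frac{(p-2i)^2 - p}{4p} \;-\; 2\,t_{\min(i,\,p-i)}(K),
\]
and using $|v\cdot e| = |p-2i|$, the $(p-2i)^2$ terms cancel and one obtains the clean bound
\[
t_i(K) \;\le\; \frac{L - n}{8}.
\]
In particular, choosing $L = n + 8k$ gives $t_i(K) \le k$ for any Spin$^c$ structure $\mathfrak{s}_i$ arising in this way.

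To finish, for each $k \in [0, \lfloor(g-1)/4\rfloor + 1]$ one constructs a characteristic vector $v_k$ with $-v_k^2 = n+8k$ and $v_k \cdot e = p - 2i_k$. The Cauchy--Schwarz necessary condition is $(p-2i_k)^2 \le (n+8k)\,p$, which is implied by the strictly stronger hypothesis $(p-2i_k)^2 < (8k+1)\,p$ built into the definition of $i_k$; the extra slack allows one to realize $v_k$, for every admissible $n \ge 1$, by adjusting the standard all-ones characteristic vector in sign and magnitude so as to both increase its squared length by $8k$ and land on the desired residue $p-2i_k$ modulo $2p$. The resulting bound $t_{i_k}(K)\le k$ contradicts the hypothesis $t_{i_k}(K) \ge k+1$, completing the proof. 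The main technical obstacle is precisely this realizability step: given an embedding of the rank-one lattice $\langle p\rangle$ into $(-I)^n$, one must exhibit, for each admissible $k$, a characteristic vector of prescribed squared length and prescribed inner product with $e$; the feasibility of this combinatorial sign/coefficient choice, uniformly in $n$, is what dictates the range $0 \le k \le \lfloor(g-1)/4\rfloor + 1$.
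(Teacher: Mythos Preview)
Your approach is substantially more involved than the paper's and leaves the decisive step unproved. The paper's argument is direct: using Proposition~\ref{proposition: d1} one writes
\[
d(Y,i)=\frac{(2g-1-2i)^2}{4(2g-1)}-\frac14-2t_i(K),\qquad 0\le i\le g-1,
\]
and then checks that each $d(Y,i)<0$. The point of the definition of $i_k$ is that $i\ge i_k$ forces $\frac{(2g-1-2i)^2}{4(2g-1)}-\frac14<2k$, so on the interval $i_k\le i\le i_{k-1}$ monotonicity gives $t_i\ge t_{i_{k-1}}\ge k$ by hypothesis and hence $d(Y,i)<2k-2t_i\le 0$. Since $i_{[\frac{g-1}{4}]+1}=0$, these intervals cover $[0,g-1]$. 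With all $d$-invariants negative, Proposition~\ref{proposition: d0} rules out any negative-definite bounding, and Theorem~\ref{thm: negdefinite} finishes. No diagonalization and no characteristic-vector combinatorics are needed; the hypotheses are tailored precisely to this estimate.

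Your route instead attempts to reprove the Owens--Strle obstruction from scratch via Donaldson's theorem, and you correctly identify the realizability of the vectors $v_k$ as ``the main technical obstacle'' --- but then you do not establish it. The assertion that ``the extra slack allows one to realize $v_k$, for every admissible $n\ge 1$'' is a genuine gap: the vector $e\in\mathbb{Z}^n$ with $\sum e_j^2=p$ is not under your control (it is determined by the unknown filling $X$), and you must produce, for \emph{every} such $e$, a characteristic $v$ of squared length exactly $n+8k$ whose pairing with $e$ lands in the required residue class modulo $2p$. The Cauchy--Schwarz inequality you invoke is only a necessary condition; the actual construction is a subset-sum--type problem in the entries of $e$ and is not addressed. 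A secondary imprecision: the intersection form of $Z=X\cup_Y(-W_p(K))$ is in general not $Q_X\oplus(-p)$ over $\mathbb{Z}$ (only over $\mathbb{Q}$, since $H_1(Y)\neq 0$), so the lattice picture needs more care, though this alone would be repairable.
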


Now we consider Dehn surgeries along two-component L-space links.  Recall that a two-component link in $S^3$ is an L-space link if all of its positive large Dehn surgeries yield L-spaces \cite[Definition 1.8]{liu}.

Let $\mathbb{L}_n$, $n\geq 1$, be the link shown in Figure~\ref{figure:Ln}, where the first component $L_1$ is the unknot, and the second component $L_2$ is the torus knot $T(2,2n+1)$.

\begin{theorem}\label{Theorem:Main3}
The $(p_1, p_2)$-surgery along $\mathbb{L}_{n}$ yields a 3-manifold which admits no symplectic fillable contact structure, where \\
1, $p_{1}\in\{2, 3, 4\}$, $p_{2}=2n+1$, or \\
2, $p_{1}=5$, $p_{2}=2n+1$ and $5<2n+1$, or \\
3, $p_{1}=2$, $p_{2}=2n+2$ and $n+1$ is not a square.
\end{theorem}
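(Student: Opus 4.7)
The plan is to obstruct weak symplectic fillings using Heegaard Floer correction terms together with Donaldson's diagonalization theorem, following the scheme employed in \cite{KaTo,liliu}. Write $Y_{p_1,p_2} = S^3_{p_1,p_2}(\mathbb{L}_n)$ and let $\ell = \operatorname{lk}(L_1, L_2)$ be the linking number determined by Figure~\ref{figure:Ln}. The 2-handle cobordism presenting $Y_{p_1,p_2}$ has intersection form $\Lambda = \begin{pmatrix} p_1 & \ell \\ \ell & p_2 \end{pmatrix}$; in all three cases $\det\Lambda>0$ and $p_i>0$, so $-Y_{p_1,p_2}$ bounds a negative definite 4-manifold $X$ with intersection form $-\Lambda$.

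The first step is to identify $Y_{p_1,p_2}$ as an L-space and to compute its correction terms. Since $L_2=T(2,2n+1)$ is an L-space knot and $L_1$ is an unknot encircling the torus knot in the standard way, one checks that $\mathbb{L}_n$ is a two-component L-space link in the sense of \cite{liu}; the chosen surgery coefficients then lie in the L-space surgery cone, and the $d$-invariants of $Y_{p_1,p_2}$ can be extracted from the $H$-function of $\mathbb{L}_n$ via the link surgery formula. Alternatively, after an isotopy one may recognize $Y_{p_1,p_2}$ as a small Seifert fibered or graph manifold and extract the $d$-invariants from the Ozsv\'ath--Szab\'o plumbing formula.

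The key obstruction step runs as follows. Suppose for contradiction that $(Y_{p_1,p_2},\xi)$ admits a weak symplectic filling $(W,\omega)$. Because $Y_{p_1,p_2}$ is a rational homology sphere, $(W,\omega)$ may be deformed to a strong filling. Gluing $X$ to $-W$ along $Y_{p_1,p_2}$ produces a closed smooth 4-manifold $Z$; the L-space condition together with the Ozsv\'ath--Szab\'o correction term inequality forces $Z$ to be negative definite, so by Donaldson's theorem $Q_Z \cong -\mathbb{Z}^N$ as a lattice. Restricting gives an isometric embedding $\varphi\colon (H_2(X;\mathbb{Z}),-\Lambda) \hookrightarrow (-\mathbb{Z}^N,-\operatorname{Id})$ which must be compatible with the extremal $d$-invariants of $-Y_{p_1,p_2}$, in the sense that for each relevant spin$^c$ structure $\mathfrak{s}$ there is a characteristic vector $K\in -\mathbb{Z}^N$ extending $c_1(\mathfrak{s})$ satisfying $K^2 + N = 4 d(-Y_{p_1,p_2},\mathfrak{s})$. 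Concretely, one must produce vectors $v_1,v_2 \in \mathbb{Z}^N$ with $v_i \cdot v_i = -p_i$ and $v_1\cdot v_2 = -\ell$, subject to these characteristic-vector equations.

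It remains to carry out a case-by-case lattice calculation showing that no such $(v_1,v_2)$ exists. In Case~1, the small value $p_1\in\{2,3,4\}$ leaves only finitely many shapes for $v_1$ up to lattice automorphism, and checking each against the admissibility constraints on $v_2$ coming from the $d$-invariants produces a direct contradiction. Case~2 is similar with $v_1$ of squared norm $5$; the hypothesis $5<2n+1$ rules out the one remaining potentially consistent configuration. In Case~3, with $p_1=2$ and $p_2=2n+2$, fixing $v_1$ up to symmetry (essentially forced since it has norm $-2$) reduces the characteristic-vector condition for $v_2$ to the requirement that $n+1$ be a perfect square, and the hypothesis then delivers the contradiction. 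I expect the main obstacle to be this last step: keeping careful track of which residue classes modulo $2\mathbb{Z}^N$ and which characteristic vectors correspond to the extremal $d$-invariants, and verifying that for precisely the listed $(p_1,p_2)$ the resulting Diophantine system is unsolvable. The appearance of the perfect-square condition in Case~3 is the cleanest signal that a lattice-embedding obstruction is operative.
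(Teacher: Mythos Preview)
Your proposal has a genuine gap rooted in an uncomputed quantity: the linking number of $\mathbb{L}_n$ is zero. (The paper uses this throughout---Theorem~\ref{thm: d2} requires it, and Lemma~\ref{lemma:-defcob} invokes it explicitly.) With $\ell=0$ your lattice is $-\Lambda=\langle -p_1\rangle\oplus\langle -p_2\rangle$, and this embeds in $-\mathbb{Z}^N$ for every $N\ge p_1+p_2$ via $v_1=e_1+\cdots+e_{p_1}$, $v_2=e_{p_1+1}+\cdots+e_{p_1+p_2}$. So the lattice-embedding step is vacuous, and your ``finitely many shapes for $v_1$'' case analysis cannot by itself produce a contradiction. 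The additional characteristic-vector constraint you invoke is also not well posed: in the closed manifold $Z$ there is no spin$^c$ structure on $Y$ to ``extend'', and the Ozsv\'ath--Szab\'o inequality for the filling $W$ is an inequality, not an equality matched to $d(-Y,\mathfrak{s})$. If you unwind what the correct constraint should be, you recover precisely the Owens--Strle bound of Proposition~\ref{proposition: d0}, which the paper applies as a black box. Your alternative suggestion of recognizing $Y_{p_1,p_2}$ as Seifert fibered also fails: as the paper notes, these manifolds are hyperbolic.

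The paper's argument is more direct and structurally different. It computes the multivariable Alexander polynomial of $\mathbb{L}_n$ (Lemma~\ref{lemma:alex3}), hence the $h$-function, and then uses the Gorsky--Liu--Moore formula (Theorem~\ref{thm: d2}) to show that \emph{every} $d$-invariant of $S^3_{p_1,p_2}(\mathbb{L}_n)$ is negative in Cases~1--2 (Lemma~\ref{lemma:subcase1}) and strictly less than $\tfrac{1}{6}$ in Case~3 (Lemma~\ref{lemma:subcase2}). Proposition~\ref{proposition: d0} then rules out \emph{any} negative-definite bounding, and Theorem~\ref{thm: negdefinite} finishes. The paper verifies the $d$-invariant inequalities only for the largest value of $p_1$ in each case and reaches the smaller $p_1$'s in Case~1 via the negative-definite cobordism of Lemma~\ref{lemma:-defcob}. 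Finally, the hypothesis ``$n+1$ not a square'' in Case~3 does not arise from a Diophantine constraint on $v_2$ as you conjecture; rather $|H_1|=p_1p_2=4(n+1)$, and Proposition~\ref{proposition: d0} yields a strictly positive lower bound on $\max_{\mathfrak t} d(Y,\mathfrak t)$ exactly when $|H_1|$ is not a perfect square---which is what the computed bound $\max d<\tfrac{1}{6}$ then contradicts.
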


\begin{figure}[htb]
\begin{overpic}
{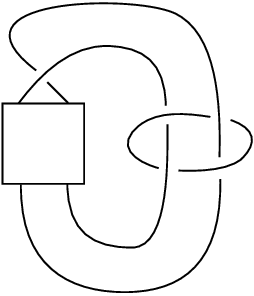}

\put(120, 86){$p_1$ }
\put(-10, 128){$p_2$ }
\put(20, 70){$n$ }
\end{overpic}
\caption{ Two-component link $\mathbb{L}_n$. The box represents $n$ full right handed twists.}
\label{figure:Ln}
\end{figure}

\begin{remark} In the Thistlethwaite Link Table,  $\mathbb{L}_{1}$ is  $L7a3$,  $\mathbb{L}_{2}$ is $L9a14$, and $\mathbb{L}_{3}$ is $L11a110$. By verification from SnapPy, the $(2, 3)$, $(2,4)$, $(3,3)$-surgeries along $\mathbb{L}_{1}$,  the $(2, 5)$, $(3,5)$, $(4,5)$, $(2,6)$-surgeries along $\mathbb{L}_{2}$, and the $(2,7)$, $(3,7)$, $(4,7)$, $(5,7)$, $(2,8)$-surgeries along $\mathbb{L}_{3}$ all yield hyperbolic 3-manifolds.
\end{remark}


\begin{figure}[htb]
\begin{overpic}
{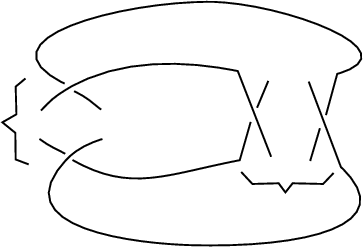}
\put(33, 60){$\cdot$}
\put(33, 64){$\cdot$}
\put(33, 56){$\cdot$}
\put(134, 60){$\cdot$}
\put(138, 60){$\cdot$}
\put(142, 60){$\cdot$}
\put(-10, 60){$a_1$}
\put(135, 20){$a_2$}
\end{overpic}
\caption{Two-bridge link $K(a_1,a_2)$. For $i=1,2$, if $a_i\geq0$, then there are $a_i$ positive crossings. If $a_i<0$, then there are $-a_i$ negative crossings. The link $K(a_1,a_2)$ is a knot if and only if either $a_1$ or $a_2$ is even. }
\label{figure:simple2bridge0}
\end{figure}

Let $K(a_{1}, a_{2})$ be the two-bridge link shown in Figure~\ref{figure:simple2bridge0},  where both $a_{1}$ and $a_{2}$ are positive odd integers. By \cite[Theorem 3.8]{liu}, $K(a_{1}, a_{2})$ is an L-space link if both $a_{1}$ and $a_{2}$ are positive odd integers.  Note that the two-bridge link $K(a_{1}, a_{2})$ is denoted by $b(a_{1}a_{2}-1, -a_{1})$ in \cite{liu} and \cite{glm}.


\begin{proposition}\label{proposition:K(5,5)}
The $(3,3)$-surgery along the two-bridge link $K(5, 5)$ yields a 3-manifold which admits no symplectic fillable contact structure.
\end{proposition}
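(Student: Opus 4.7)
The plan is to obstruct the existence of a symplectic filling of $Y := S^3_{3,3}(K(5,5))$ by combining the correction-term inequalities of Ozsv\'ath--Szab\'o with a lattice-enumeration argument, in the same spirit as the proofs of Theorems \ref{Theorem:Main1} and \ref{Theorem:Main3}. The first step is to verify that $Y$ is a rational homology sphere and in fact an L-space. Since $K(5,5)$ is a two-component L-space link by \cite[Theorem 3.8]{liu}, this follows once we check that the surgery coefficients $(3,3)$ lie in the L-space surgery region of $K(5,5)$, using Liu's characterization in terms of the link $H$-function.

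Next, we compute every correction term $d(Y,\mathfrak{s})$ for $\mathfrak{s} \in \mathrm{Spin}^c(Y)$. For surgery on a two-component L-space link these are determined explicitly by the link $H$-function, which in our case can be extracted from the multi-variable Alexander polynomial of the two-bridge link $K(5,5)$; the computation is completely explicit though combinatorially nontrivial.

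Suppose, for contradiction, that $(Y,\xi)$ is symplectically fillable and let $(W,\omega)$ be a filling. Since $Y$ is an L-space, a theorem of Ozsv\'ath--Szab\'o forces $b_2^+(W)=0$, and after a small modification we may take $W$ to be smooth with negative definite intersection form $Q_W$ and $b_1(W)=0$. The Ozsv\'ath--Szab\'o correction-term inequality then says that for every characteristic element $\kappa \in H^2(W;\mathbb{Z})$,
\[
\kappa^2 + b_2(W) \leq 4 \, d\bigl(Y, \mathfrak{t}_\kappa|_Y\bigr),
\]
and every $\mathrm{Spin}^c$ structure on $Y$ is realised by the restriction $\mathfrak{t}_\kappa|_Y$ of some such $\mathfrak{t}_\kappa$. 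Combined with the determinant constraint $|\det Q_W|=|H_1(Y)|$ and the explicit values of $d(Y,\cdot)$ from the previous step, these inequalities cut out a finite set of candidate negative definite lattices for $(H_2(W;\mathbb{Z}),Q_W)$.

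The main obstacle will be the final combinatorial step: we must show that no negative definite lattice on this candidate list admits enough short characteristic covectors to simultaneously realise all of the computed $d$-invariants. In analogous situations this is carried out either by a direct case analysis on small rank or by invoking obstructions of Elkies or Owens--Strle type; since $|H_1(Y)|$ is expected to be small here, the enumeration should terminate quickly. A secondary technical issue is the accurate computation of the link $H$-function of $K(5,5)$, where, because the link is genuinely two-component rather than a knot, the bookkeeping over both Alexander filtrations must be done carefully.
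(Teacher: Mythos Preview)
Your overall strategy is sound and would eventually succeed, but you are planning to work much harder than necessary in the final step, and the paper's route is cleaner. The paper proceeds exactly as you do through the computation of the $h$-function from the multivariable Alexander polynomial of $K(5,5)$, and then uses Theorem~\ref{thm: d2} to compute all nine correction terms $d(S^3_{3,3}(K(5,5)),(i_1,i_2))$ explicitly. The punchline is that every single one of them is strictly negative (the maximum is $-\tfrac{1}{3}$). At that point no lattice enumeration is needed: Proposition~\ref{proposition: d0} (Owens--Strle) immediately implies that $Y$ bounds no negative-definite $4$-manifold, since here $|H_1(Y)|=9$ so $z=1$ and the bound reads $\max_{\mathfrak t} 4d(Y,\mathfrak t)\ge 0$. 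Combined with Theorem~\ref{thm: negdefinite} this finishes the proof.

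So the difference is not in the input data but in how you package the obstruction: you propose to feed the $d$-invariants into the Ozsv\'ath--Szab\'o characteristic-element inequality and then enumerate candidate lattices, whereas the paper observes that the single scalar inequality of Proposition~\ref{proposition: d0} already fails. Your route would work, but it hides the simple reason the argument goes through. Two small side remarks: (i) the identity $|\det Q_W|=|H_1(Y)|$ you invoke is not correct in general for non--simply-connected fillings---only a divisibility relation holds---so if you really pursued the enumeration you would need to be more careful there; (ii) that $(3,3)$ lies in the L-space region is already recorded in \cite[Example~5.4]{glm}, so you can cite it rather than rederive it from the $H$-function.
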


\begin{remark}
In the Thistlethwaite Link Table,  $K(5,5)$ is  the mirror image of $L9a40$. Verified by SnapPy, $S^{3}_{3,3}(K(5, 5))$ is a hyperbolic 3-manifold. 
\end{remark}

As all known examples of irreducible closed 3-manifolds which admit no symplectic fillable contact structures are L-spaces, the following questions may be interesting.

\begin{question}
Is there an irreducible rational homology sphere which is not an L-space and admits no Stein fillable contact structure?
\end{question}

\begin{question}
Is there a closed irreducible 3-manifold which is not a rational homology sphere and admits no Stein fillable contact structure, or no strongly symplectic fillable contact structure?
\end{question}

There is a potential approach to finding an irreducible rational homology sphere which is not an L-space and admits no Stein fillable contact structure.

\begin{proposition}\label{proposition:Main5}
Suppose $Y$ is a rational homology sphere whose correction terms are all negative, and $HF^{+}(Y,\mathfrak{s})$ $=\mathcal{T}^{+}\oplus \mathbb{F}$ for a self-conjugate Spin$^{c}$ structure $\mathfrak{s}\in Spin^{c}(Y)$. If\\
(1) $d(Y,\mathfrak{s})<-2.5$, or \\
(2) $d(Y,\mathfrak{s})<-2.25$ and $HF^{+}(Y)=\mathcal{T}^{+}_{d}\oplus \mathbb{F}_{d}$, or\\ (3) $d(Y,\mathfrak{s})<0$ and $\alpha(Y,\mathfrak{s})<\frac{d(Y,\mathfrak{s})}{2}+1$, \\where $d(Y,\mathfrak{s})$ is the correction term, $\alpha(Y,\mathfrak{s})$ is a Manolescu invariant,
then $Y$ admits no Stein fillable contact structures whose underlying Spin$^{c}$ structure is $\mathfrak{s}$.
\end{proposition}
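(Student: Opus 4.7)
The plan is to argue by contradiction: suppose $\xi$ is a Stein fillable contact structure on $Y$ with $\mathfrak{s}_\xi=\mathfrak{s}$, and fix a Stein filling $W$. In all three cases the argument is driven by the Ozsv\'ath--Szab\'o contact invariant $c^+(\xi)\in HF^+(-Y,\mathfrak{s})$, which is nonzero by fillability and whose absolute grading is determined by the Gompf formula $d_3(\xi)=\tfrac{1}{4}(c_1(W,J)^2-3\sigma(W)-2\chi(W))$ in terms of the topology of $W$.

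First I would translate the hypothesis $HF^+(Y,\mathfrak{s})=\mathcal{T}^+\oplus\mathbb{F}$ into information about $HF^+(-Y,\mathfrak{s})$. Because $\mathfrak{s}$ is self-conjugate, Ozsv\'ath--Szab\'o duality together with conjugation symmetry gives $HF^+(-Y,\mathfrak{s})=\mathcal{T}^+\oplus\mathbb{F}$, with the tower beginning at grading $-d(Y,\mathfrak{s})$ and the reduced generator at a grading $g'$ determined by the grading of the reduced summand on the $Y$-side; in case (2) the extra hypothesis on $HF^+(Y)$ pins down $g'=-d(Y,\mathfrak{s})-1$. Hence the grading of $c^+(\xi)$ must lie in the set $\{-d(Y,\mathfrak{s})+2j:j\ge 0\}\cup\{g'\}$.

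For parts (1) and (2), I would combine this placement with the Ozsv\'ath--Szab\'o $d$-invariant inequality applied to $W$. After reducing to $b_2^+(W)=0$ by blow-up if necessary, the estimate $c_1(W,J)^2+b_2^-(W)\le 4d(Y,\mathfrak{s})$ rearranges, using the Stein formula for $d_3$, to the clean upper bound $d_3(\xi)\le d(Y,\mathfrak{s})-\tfrac{1}{2}$. A short case analysis, depending on whether $c^+(\xi)$ is a tower class (so $d_3=d-2j$ for some $j\ge 1$) or the reduced class (so $d_3=-g'$), matches $d_3$ against both constraints. In case (2) the pinned value of $g'$ tightens the analysis, forcing $d(Y,\mathfrak{s})\ge -\tfrac{9}{4}$, contradicting $d<-2.25$. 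In case (1), without the extra datum, the same analysis only pushes $d$ up to $-\tfrac{5}{2}$, still a contradiction.

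For part (3), I would pass to Manolescu's Pin(2)-equivariant Seiberg--Witten Floer spectrum. Since $\mathfrak{s}$ is self-conjugate the invariant $\alpha(Y,\mathfrak{s})$ is defined, and a Stein filling of $(Y,\xi)$ produces, via Lin's Pin(2)-equivariant relative Bauer--Furuta map, the inequality $\alpha(Y,\mathfrak{s})\ge\tfrac{d(Y,\mathfrak{s})}{2}+1$, directly contradicting the hypothesis. The main obstacle is the grading bookkeeping behind the fractional thresholds $-\tfrac{5}{2}$ and $-\tfrac{9}{4}$ of (1) and (2): these values track, modulo $2$, how the single $\mathbb{F}$-summand of $HF_{\mathrm{red}}$ can or cannot accommodate the grading of $c^+(\xi)$ that the Stein filling forces. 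The Pin(2) step in (3) is a cleaner input once Lin's Bauer--Furuta inequality is invoked.
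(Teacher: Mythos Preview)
Your argument for parts (1) and (2) has a genuine gap at the step ``after reducing to $b_2^+(W)=0$ by blow-up if necessary.'' Blowing up adds a $(-1)$-summand to the intersection form and hence increases $b_2^-$; it never changes $b_2^+$. There is no elementary topological operation that converts a Stein (or symplectic) filling with $b_2^+(W)>0$ into one with $b_2^+(W)=0$, and without negative-definiteness the Ozsv\'ath--Szab\'o inequality $c_1(W,J)^2+b_2^-(W)\le 4d(Y,\mathfrak{s})$ you want to invoke simply does not hold. Consequently your bound $d_3(\xi)\le d(Y,\mathfrak{s})-\tfrac12$ is unjustified, and the subsequent case analysis on the grading of $c^+(\xi)$ has no traction.

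The paper's proof proceeds quite differently. It quotes Lin's classification theorem for symplectic fillings of rational homology spheres with $\mathrm{rk}\,HF^+_{\mathrm{red}}\le 1$: any such filling with $b_2^+(W)>0$ and self-conjugate canonical $\mathrm{Spin}^c$ structure is of Type~I ($b_2^+=1$, $b_2^-=4d+9$, with $HF^+=\mathcal{T}^+_d\oplus\mathbb{F}_d$) or Type~II ($b_2^+=2$, $b_2^-=4d+10$, with $HF^+=\mathcal{T}^+_d\oplus\mathbb{F}_{d-1}$). The thresholds $-\tfrac52$ and $-\tfrac94$ then come immediately from $b_2^-(W)\ge 0$. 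This forces $b_2^+(W)=0$, which contradicts $d(Y,\mathfrak{s})<0$ via Owens--Strle. Your outline for part (3) has the same issue: the inequality $\alpha(Y,\mathfrak{s})\ge\tfrac{d}{2}+1$ is not a direct consequence of a relative Bauer--Furuta map for an arbitrary Stein filling; in the paper it is obtained by plugging the specific Type~I/II values of $b_2^\pm$ into Lin's Fr{\o}yshov-type bound $\alpha(Y,\mathfrak{s})\ge\tfrac18(b_2^-(W)-b_2^+(W))$, and then again reaching a contradiction with $b_2^+(W)=0$.
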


In the special case where $Y$ is an integer homology sphere, the unique Spin$^{c}$ structure $\mathfrak{s}$ on $Y$ is self-conjugate. We denote $d(Y,\mathfrak{s})$ by $d(Y)$, and $\alpha(Y,\mathfrak{s})$ by $\alpha(Y)$. It is known that $\alpha(Y)$ is an integer, $d(Y)$ is an even integer, and $\frac{1}{2}d(Y)\leq \alpha(Y)$ \cite[Theorem1.2]{st}. Suppose furthermore that $HF^{+}(Y)=\mathcal{T}^{+}\oplus \mathbb{F}$. According to Proposition~\ref{proposition:Main5}, if $Y$ satisfies either $d(Y)<-2$ or $d(Y)=2\alpha(Y)=-2$, then $Y$ admits no Stein fillable contact structures.  

\begin{question}
Is there an irreducible integer homology sphere $Y$ which satisfies \\
(1) $HF^{+}(Y)=\mathcal{T}^{+}\oplus \mathbb{F}$, and \\
(2) either $d(Y)<-2$; or $d(Y)=2\alpha(Y)=-2$?
\end{question}

\bigskip

On the opposite side of the existence problem, it is well-known that $S^{3}_{r}(K)$ admits a Stein fillable contact structure if the rational number $r$ is smaller than the maximal Thurston-Bennequin invariant $TB(K)$ of a knot $K$ in $S^3$.  While Theorem~\ref{Theorem:Main1} tells us that certain surgeries along $K$ may admit no symplectic fillable contact structure, we will see that the 3-manifolds resulting from sufficiently large surgeries along many knots, including the ones in Theorem~\ref{Theorem:Main1}, do admit Stein fillable contact structures, or equivalently, bound Stein domains.



\begin{theorem}\label{thm:closed3braids}
Suppose $K$ is a closed 3-braid which is an L-space knot, or the connected sum of any number of such knots. Then the $r$-surgery along $K$ yields a 3-manifold which admits a Stein fillable contact structure if the rational $r$ is sufficiently large. In particular, the 3-manifold $S^{3}_{r}(K_{n,m})$ (resp. $S^{3}_{r}(K'_{n,m})$) admits a Stein fillable contact structure if $r\geq 9m+4n-4$ (resp. if $r\geq 9m+4n-8$).
\end{theorem}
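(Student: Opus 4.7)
The plan is to exhibit an explicit positive open book decomposition of $S^3_r(K_{n,m})$ (and of $S^3_r(K'_{n,m})$) for $r$ in the stated range, from which Stein fillability will follow via the Giroux correspondence together with the Loi--Piergallini theorem (equivalently, via the construction of an allowable Lefschetz fibration with the required boundary). The numerical bound $9m+4n-8$ will emerge as the threshold at which a certain monodromy factorization becomes a product of positive Dehn twists.

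Starting from the positive $3$-braid presentation $\beta_{n,m}=(\sigma_1\sigma_2)^{3m+2}\sigma_1^{2(n-2)}$ whose closure is $K_{n,m}$, there is a Bennequin open book $(\Sigma_0,\phi_\beta)$ of $(S^3,\xi_{\mathrm{std}})$ with page $\Sigma_0$ a disk with three holes and monodromy $\phi_\beta$ the positive product of $6m+2n$ Dehn twists specified by $\beta_{n,m}$. I would first perform a sequence of positive Hopf plumbings, enlarging $\Sigma_0$ to a surface $\Sigma$ that contains a genus-$(3m+n-1)$ Seifert surface for $K_{n,m}$ as a subsurface. On $\Sigma$, the page framing of $K_{n,m}$ is arranged to equal the Seifert framing, and the open book $(\Sigma,\phi)$ still supports $\xi_{\mathrm{std}}$. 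By the standard open book surgery formula, the $r$-surgery along $K_{n,m}$ is supported by $(\Sigma,\phi\cdot\tau_{K_{n,m}}^{-r})$, where $\tau_{K_{n,m}}$ denotes the right-handed Dehn twist along the curve $K_{n,m}\subset\Sigma$.

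The central step is to rewrite the monodromy $\phi\cdot\tau_{K_{n,m}}^{-r}$ as a product of positive Dehn twists whenever $r\geq 9m+4n-8$. I would accomplish this via a chain-relation (or lantern-type) identity on $\Sigma$ that expresses $\tau_{K_{n,m}}^{-1}$ as a product of positive Dehn twists along curves disjoint from the boundary, multiplied by a controlled negative boundary-parallel twist. Iterating this identity $r$ times and absorbing the resulting negative boundary twists into the positive boundary-parallel twists already present in $\phi$ (coming from both the Hopf plumbings and the braid structure) yields a positive factorization precisely when $r\geq 9m+4n-8$. The parallel analysis for $K'_{n,m}$, whose braid word $(\sigma_1\sigma_2)^{3m+1}\sigma_1^{2(n-2)}$ has two fewer crossings and whose Seifert genus drops by one, produces the threshold $9m+4n-4$.

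For the connected-sum case, I would observe that a connected sum of closed $3$-braid L-space knots is itself the closure of a positive braid (of larger strand number obtained by juxtaposing the braid words), so the same open book construction applies directly to $K_1\#\cdots\#K_s$; the resulting bound on $r$ is the sum of the individual bounds for the summands. The main technical obstacle will be pinpointing the explicit chain/lantern identity on the genus-$(3m+n-1)$ page of $\Sigma$ and carrying out the precise bookkeeping of boundary-parallel Dehn twists to match the exact thresholds $9m+4n-8$ and $9m+4n-4$. This combinatorial verification, which depends heavily on the specific positive $3$-braid structure of $K_{n,m}$ and $K'_{n,m}$, is where I expect the heart of the argument to lie.
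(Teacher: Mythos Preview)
Your approach via open books is entirely different from the paper's and, as described, cannot yield the stated inequality. The paper works purely through Kirby calculus and Gompf's handlebody criterion: the $r$-surgery on the twisted torus knot is rewritten (after blowing down the full-twist boxes) as surgery on a three-component link in $S^1\times S^2$, with one $1$-handle and rational coefficients $-\tfrac1m,\,-\tfrac1l,\,-\tfrac1{r'}$, where $r'=r-mu^2-l(u-1)^2$ (here $u=3$, $l=n-2$). Each component is then put into Legendrian position with Thurston--Bennequin number $0$ or $1$; once $r'>0$ every coefficient lies strictly below the corresponding $tb$, and Proposition~\ref{char1} produces the Stein domain. Connected sums are handled by merging the $1$-handles of the individual Stein diagrams into a single $1$-handle. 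No mapping-class-group relations enter, and rational $r$ is covered automatically.

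The genuine gap in your plan is that the absorption mechanism points the wrong way. Writing $\tau_K^{-1}=(\text{positives})\cdot\tau_\partial^{-c}$ with $c>0$ and iterating gives $\phi\cdot\tau_K^{-r}=(\text{positives})\cdot\tau_\partial^{\,d-rc}$, where $d$ is the fixed supply of positive boundary twists in $\phi$; this is a positive product only when $r\le d/c$, so your argument would establish a positive factorization for $r$ \emph{bounded above}, not for $r\ge 9m+4n-8$. More conceptually, the open book $(\Sigma,\phi\cdot\tau_K^{-r})$ supports the contact structure obtained from $(S^3,\xi_{\mathrm{std}})$ by contact $(+r)$-surgery on a $tb=0$ Legendrian representative of $K$; there is no a~priori reason this particular contact structure is Stein fillable, and the theorem only asserts that \emph{some} contact structure on $S^3_r(K)$ is. The paper's filling comes from a handle decomposition unrelated to contact surgery on $K$. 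Finally, your scheme as stated handles only integer $r$, whereas the theorem is for rational $r$.
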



\begin{theorem}\label{thm:2bridgeknot}
Let $K$ be a two-bridge knot $K(a_1,a_2)$ of the form shown in Figure~\ref{figure:simple2bridge0}, then $S^3_{r}(K)$ admits a Stein fillable contact structure if the rational $r$ is sufficiently large.
\end{theorem}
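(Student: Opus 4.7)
The plan is to adapt the strategy from the proof of Theorem~\ref{thm:closed3braids}: realize $S^3_r(K)$ as the boundary of a Stein $4$-manifold built from $B^4$ by attaching Stein $1$-handles and Stein $2$-handles, where the $2$-handles are attached along Legendrian knots with framing $\mathit{tb}-1$ (Eliashberg's criterion).

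The first step is to produce a surgery description of $S^3_r(K)$ well-suited to Legendrian realization. The two-bridge knot $K(a_1,a_2)$ is built from two twist regions of $a_1$ and $a_2$ crossings joined by trivial arcs, so I would insert a small meridional unknot $U_i$ encircling the $i$-th twist region with surgery coefficient $-1/a_i$, absorbing the twists into the new surgery. After this slam-dunk, the knot $K$ straightens to an unknot $U_0$ carrying an integer surgery coefficient $r_0$ that is a linear function of $r$, $a_1$, and $a_2$, with $r_0\to\infty$ as $r\to\infty$. Hence $S^3_r(K)$ is realized as Dehn surgery on the three-component link $U_0\cup U_1\cup U_2$ with coefficients $(r_0,\,-1/a_1,\,-1/a_2)$, and this link has a simple geometric structure (a Hopf-type chain) after isotopy.

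The second step is to upgrade this topological surgery diagram to a contact $(-1)$-surgery diagram in a Stein handlebody. Each rational coefficient $-1/a_i$ on $U_i$ is expanded via continued fraction into a string of integers $[-b_{i,1},-b_{i,2},\ldots]$ with each $b_{i,j}\geq 2$, and the corresponding chain of Legendrian unknots with $\mathit{tb}=-b_{i,j}+1$ realizes it in $(S^3,\xi_{\mathrm{std}})$. The coefficient $r_0$ on $U_0$, however, grows without bound, whereas the maximum Thurston--Bennequin of an unknot in $(S^3,\xi_{\mathrm{std}})$ is $-1$. To accommodate the requirement $\mathit{tb}(U_0)=r_0+1$, I would introduce Stein $1$-handles to the base $B^4$ and route the Legendrian $U_0$ over them; each pass over a $1$-handle increases $\mathit{tb}$ by $1$, so sufficiently many passes supply the needed framing.

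The main obstacle, and the reason for the hypothesis that $r$ be sufficiently large, is arranging the auxiliary Stein $1$-handles to be cancelled by Stein $2$-handles in such a way that the boundary of the resulting Stein handlebody is exactly $S^3_r(K)$ rather than a connected sum with extra $S^1\times S^2$ factors. Every cancelling $2$-handle must itself be attached along a Legendrian knot with $\mathit{tb}-1$ framing, which constrains the combinatorics of the attaching circles and their interaction with the continued-fraction chains. The constraints become satisfiable precisely when $r$ exceeds a threshold determined by $a_1$ and $a_2$, namely the point at which the Legendrian representatives have enough stabilization room to absorb the needed framing adjustments. Once this combinatorial verification is carried out, Eliashberg's theorem immediately produces the desired Stein fillable contact structure on $S^3_r(K)$.
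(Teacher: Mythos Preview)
Your outline has the right flavor—reduce to surgery on a simple link, then build a Stein handlebody—but the execution contains a concrete error and a decisive gap.

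First, the continued–fraction step fails. For a nonzero integer $a_i$ the number $-1/a_i$ lies in $(-1,0)\cup(0,1)$, whereas any continued fraction $[-b_1,-b_2,\ldots,-b_k]$ with every $b_j\ge 2$ has value $\le -1$. Hence your chain of Legendrian unknots in $(S^3,\xi_{\mathrm{std}})$ (each with $\mathit{tb}=-1$) cannot realize the coefficient $-1/a_i$ by Proposition~\ref{char1}. Relatedly, a $-1/k$ surgery on a meridian of two strands inserts $k$ \emph{full} twists, i.e.\ $2k$ crossings, so absorbing $a_i$ crossings into a single meridian only works when $a_i$ is even; this is exactly why the paper splits into parity cases.

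Second, and more seriously, the heart of your argument is the sentence ``the constraints become satisfiable precisely when $r$ exceeds a threshold\ldots once this combinatorial verification is carried out.'' You never carry it out. Threading $U_0$ through many $1$-handles to raise $\mathit{tb}$ is easy; arranging the rest of the diagram so that the boundary is $S^3_r(K)$ rather than $S^3_r(K)\,\#\,(\#_m S^1\times S^2)$, while keeping every $2$-handle Legendrian with framing below $\mathit{tb}$, is the whole problem, and nothing you wrote addresses it.

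The paper's proof avoids both issues by a different mechanism. It introduces a \emph{single} $1$-handle and isotopes the diagram so that the auxiliary unknots pass through it, becoming Legendrian knots in $S^1\times S^2$ with $\mathit{tb}=1$; their coefficients $\tfrac{1}{q_i+1}$ are then either $\infty$ or strictly less than $1$. The large coefficient $r'$ on the remaining component is converted, via the $1$-handle, into a small coefficient $-1/r'$ on a Legendrian meridian. No continued–fraction chains and no proliferation of $1$-handles are needed; the explicit Kirby moves in Figures~\ref{figure:2bridge} and~\ref{figure:2bridge1} do all the work.
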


\begin{theorem}\label{thm:torusknots}
Suppose $K$ is a torus knot or the connected sum of any number of torus knots. Then $S^3_{r}(K)$ admits a Stein fillable contact structure if the rational $r$ is sufficiently large.
\end{theorem}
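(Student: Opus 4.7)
The plan is to follow the strategy used in the proofs of Theorems~\ref{thm:closed3braids} and~\ref{thm:2bridgeknot}: exhibit an explicit Stein filling of $S^3_r(K)$ for sufficiently large $r$, using the Giroux correspondence together with the Loi-Piergallini / Akbulut-Ozbagci theorem that an open book with monodromy a product of right-handed Dehn twists supports a Stein fillable contact structure, supplemented with Eliashberg's theorem that contact $(-1)$-surgery on a Legendrian link in $(S^3,\xi_{\mathrm{std}})$ yields a Stein fillable contact manifold.

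For integer $r = n$, the positive open book approach is clean. Every positive torus knot $K = T(p,q)$ is the closure of the positive braid $(\sigma_1\cdots\sigma_{p-1})^q$, hence is a fibered knot with fiber $\Sigma$ of genus $g = \tfrac{(p-1)(q-1)}{2}$ and monodromy $\phi$ a product of right-handed Dehn twists along explicit curves on $\Sigma$ (one Dehn twist per crossing in the braid). For any positive integer $n$ the open book $(\Sigma,\phi\cdot\tau_\partial^{n})$, with $\tau_\partial$ a right-handed boundary-parallel Dehn twist, supports a contact structure on $S^3_n(K)$ whose monodromy is still a positive product of Dehn twists, which by Loi-Piergallini / Akbulut-Ozbagci admits a Stein filling. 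For a connect sum $K = K_1\#\cdots\#K_m$, I would Murasugi-sum the individual Milnor open books to obtain an open book of $S^3$ with binding $K$, page $\Sigma_1\natural\cdots\natural\Sigma_m$, and monodromy still a positive product of Dehn twists; the same argument applies after appending boundary twists.

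For rational $r = a/b$ with $b>1$ and $r$ sufficiently large, I would exploit the fact that $S^3_r(T(p,q))$ is a small Seifert fibered L-space that bounds a star-shaped plumbing whose weights are determined by $p$, $q$, and $r$. For $r$ large enough, all of these weights are at most $-2$; such a plumbing admits a Legendrian realization in $(S^3,\xi_{\mathrm{std}})$ (each vertex is represented by a Legendrian unknot stabilized to the appropriate $tb$), and contact $(-1)$-surgery on this Legendrian link produces a Stein fillable contact structure by Eliashberg's theorem. The connect-sum case follows analogously via the JSJ decomposition of $S^3_r(K_1\#\cdots\#K_m)$ into Seifert fibered pieces, each of which bounds a negative-definite plumbing with weights $\leq -2$. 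The main obstacle is the continued-fraction / Seifert-invariants bookkeeping needed to confirm that all plumbing weights fall into this range for every sufficiently large rational $r$, and handling the graph-manifold arithmetic in the connect-sum setting; once this is verified, Eliashberg's theorem immediately finishes the proof.
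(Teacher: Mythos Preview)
Your proposal has two genuine gaps. First, the open-book step is based on an incorrect surgery formula: for a fibered knot $K\subset S^3$ with fiber $\Sigma$ and monodromy $\phi$, the open book $(\Sigma,\phi\cdot\tau_\partial^{n})$ describes $S^3_{\pm 1/n}(K)$ (surgery slope $\pm 1/n$ relative to the page framing, which here is the Seifert framing), \emph{not} $S^3_n(K)$. Integer $n$-surgery on the binding does not keep the same page; you would need to stabilize, and the resulting monodromy is no longer simply $\phi\tau_\partial^n$. So the positive-factorization argument for integer surgeries, as written, does not apply. This also means the Murasugi-sum extension to connected sums (and in particular the case where some summand is a \emph{negative} torus knot, whose monodromy is a product of left-handed twists) is not established.

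Second, the sketch for rational surgery on connected sums via the JSJ decomposition does not produce a Stein filling: even if each Seifert piece bounds a plumbing with weights $\le -2$, these are $4$-manifolds with torus boundary components, and Stein structures do not glue along the JSJ tori. One would need a single global negative-definite plumbing (or other Stein handlebody) bounding the graph manifold $S^3_r(K_1\#\cdots\#K_m)$, and this is exactly the nontrivial content. The paper's proof proceeds quite differently: for a single torus knot (positive or negative) it writes $S^3_r(T(\pm p,q))$ explicitly as a small Seifert fibered space $M(-2;\frac{r_1-1}{r_1},\frac{r_2-1}{r_2},\frac{r_3-1}{r_3})$ and checks that for $r$ large all three surgery coefficients satisfy $-r_i/(r_i-1)<-1$, yielding a Stein handlebody directly. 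For the connected sum, it exhibits an explicit surgery diagram linking these Seifert pictures through a common chain, verifies (using Ni--Zhang's characterizing-slopes theorem for torus knots) that the diagram really represents $S^3_r(K_1\#\cdots\#K_n)$, and then performs concrete handle slides and blow-downs to reach a Legendrian surgery diagram satisfying Gompf's criterion. No open-book or JSJ argument is used.
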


\begin{theorem}\label{thm:pretzel}
Suppose $K$ is the pretzel knot $P(-2l-1,2m+1,2n+1)$, where $l,m,n$ are positive integers. Then $S^3_{r}(K)$ admits a Stein fillable contact structure if the rational $r$ is sufficiently large.
\end{theorem}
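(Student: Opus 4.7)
The plan is to reduce Theorem~\ref{thm:pretzel} to the two-bridge knot case handled by Theorem~\ref{thm:2bridgeknot}, via a Rolfsen twist that simplifies the negative tangle of $K = P(-2l-1, 2m+1, 2n+1)$. The key observation is that the pretzel $P(-1, 2m+1, 2n+1)$ is isotopic to the two-bridge knot $K(2m, 2n)$ of Figure~\ref{figure:simple2bridge0}: both have determinant $4mn - 1$, and the 2-bridge fraction $b(4mn-1, -2m)$ agrees with the prescription $K(a_1, a_2) = b(a_1 a_2 - 1, -a_1)$ at $(a_1, a_2) = (2m, 2n)$.

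Concretely, let $U$ be an unknot bounding a disk transverse to the two strands of the $(-2l-1)$-tangle of $K$. Orient the tangle so that these two strands pierce the disk in opposite directions, giving $\mathrm{lk}(K, U) = 0$. Introducing $U$ into the surgery diagram with coefficient $\pm 1/l$ does not alter the 3-manifold $S^3_r(K)$, since $\pm 1/l$-surgery on an unknot in $S^3$ again yields $S^3$. Choosing the sign so that the resulting Rolfsen twist adds $+l$ full twists through the disk bounded by $U$, we convert the $(-2l-1)$-tangle into a $(-1)$-tangle, while the surgery coefficient on $K$ is unchanged (the framing shift $l \cdot \mathrm{lk}(K,U)^2$ vanishes). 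Hence
\[
S^3_r(K) \;\cong\; S^3_r\bigl(P(-1, 2m+1, 2n+1)\bigr) \;\cong\; S^3_r\bigl(K(2m, 2n)\bigr).
\]

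Applying Theorem~\ref{thm:2bridgeknot} to $K(2m, 2n)$ then completes the proof: for all sufficiently large rational $r$, the 3-manifold $S^3_r(K(2m, 2n))$ admits a Stein fillable contact structure, and hence so does $S^3_r(K)$.

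The main step requiring care is the Kirby-calculus verification: (i) the isotopy $P(-1, 2m+1, 2n+1) \simeq K(2m, 2n)$, which one can check either by the 2-bridge fraction computation above or by an explicit diagrammatic isotopy rotating the single-crossing $(-1)$-tangle into the position connecting the two twist boxes of Figure~\ref{figure:simple2bridge0}; and (ii) the orientation and linking-number verification for the Rolfsen twist, to ensure it converts $-2l-1$ to $-1$ and does not shift the framing coefficient. Neither step is conceptually difficult, but both demand attention to sign conventions; a mirror-image ambiguity, should it arise, would force a parallel argument using the mirror of $K(2m, 2n)$, which also falls under the scope of Theorem~\ref{thm:2bridgeknot} once negative twist parameters are allowed in Figure~\ref{figure:simple2bridge0}.
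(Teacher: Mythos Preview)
Your argument contains a fatal Kirby-calculus error.  You assert that introducing the unknot $U$ with coefficient $\pm 1/l$ ``does not alter the 3-manifold $S^3_r(K)$, since $\pm 1/l$-surgery on an unknot in $S^3$ again yields $S^3$.''  That inference is invalid: $\pm 1/l$-surgery on an unknot returns $S^3$ only when the unknot is geometrically split from the rest of the diagram.  Here $U$ encircles two strands of $K$, so adjoining it with a finite coefficient genuinely changes the surgered manifold.  What the Rolfsen twist actually proves is
\[
S^3_{r,\;\pm 1/l}(K\cup U)\;\cong\;S^3_{r}\bigl(P(-1,2m+1,2n+1)\bigr),
\]
an identification between a \emph{two-component} link surgery and a knot surgery.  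It does \emph{not} yield $S^3_r(K)\cong S^3_r\bigl(P(-1,2m+1,2n+1)\bigr)$.  Indeed, if that homeomorphism held for all large $r$, the pretzel $P(-2l-1,2m+1,2n+1)$ and the two-bridge knot $K(2m,2n)$ would share all sufficiently large rational surgeries, contradicting (for generic $l,m,n$) the fact that the former has bridge number three.  Put differently: your Step~A (adding $U$) and Step~B (twisting $U$ away) both transform $K$ into $P(-1,2m+1,2n+1)$; you have counted the same twist twice.

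The paper's proof avoids this by never discarding the auxiliary curve.  It rewrites $S^3_r(K)$ as a $(\tfrac{1}{l},\,-\tfrac{1}{m+1},\,-\tfrac{1}{n},\,r)$-surgery on a four-component link, then isotopes and converts this into a Stein handlebody diagram in $\#_1\,S^1\times S^2$ meeting Gompf's framing criterion (Proposition~\ref{char1}).  The curve carrying the $\tfrac{1}{l}$ information survives to the final picture (as a component with coefficient $-l$); it is precisely what encodes the dependence on $l$ that your reduction erases.
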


So we formulate the following question.

\begin{question}\label{ques:highsurgery}
Suppose $K$ is a knot in $S^3$. Does $S^{3}_{r}(K)$ admit a Stein fillable contact structure for $r$ sufficiently large?
\end{question}

This question is closely related to the following conjecture raised by Stipsicz \cite{s} in 2010.

\begin{conjecture}[High surgery conjecture]\label{conj:highsurgery}
Suppose $K\subset S^3$ is a knot. Then there is an integer $n_K$ such that for all $r\geq n_K$ the surgered 3-manifold $S^{3}_{r}(K)$ admits a tight contact structure.
\end{conjecture}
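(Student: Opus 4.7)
The plan is to extend the constructive approach used in Theorems~\ref{thm:closed3braids}--\ref{thm:pretzel} to successively larger families of knots in $S^3$, aiming eventually to cover all of them. In each of those preceding theorems, $S^3_r(K)$ is realized for large $r$ as the boundary of an explicit Stein handlebody built from $B^4$ by attaching Weinstein 2-handles along Legendrian curves in $(S^3,\xi_{std})$ whose contact framing is one less than the prescribed surgery framing; Eliashberg--Gompf then guarantees that the resulting contact structure is Stein fillable. For $r \leq TB(K)-1$, Legendrian surgery along a maximal Thurston--Bennequin representative of $K$ gives Stein fillability directly, so the real content of the conjecture lies in the range $r > TB(K)-1$, particularly for knots where $TB(K)$ is far below $2g(K)-1$.

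The principal strategy would be to realize $S^3_r(K)$, for $r$ larger than an effective bound $n_K$, as contact $(-1)$-surgery along a Legendrian link that contains a Legendrian representative of $K$ together with additional components: either chains of stabilized Legendrian unknots that encode a continued-fraction expansion of $r$ with all entries $\leq -2$, or meridional stabilized unknots that effectively ``absorb'' the extra surgery coefficient. A parallel line of attack, via the Giroux correspondence, is to produce an open book decomposition of $S^3_r(K)$ whose page contains a Seifert surface of $K$ and whose monodromy factors as a product of positive Dehn twists. For $r$ sufficiently large, the boundary-parallel twist coming from the surgery framing should contribute enough positivity to dominate any negative twists inherited from the monodromy associated to $K$, and the existence of such a positive factorization is equivalent to Stein fillability.

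The main obstacle is that for an arbitrary knot $K$, we have essentially no uniform control over $TB(K)$ in terms of $g(K)$, nor any canonical way to factor the natural monodromy into positive and negative Dehn twists. Every family handled in the preceding theorems exploits a specific structural feature --- a positive 3-braid presentation, two-bridge form, torus knot structure, or alternating pretzel form --- that makes the requisite positivity visible. For a genuinely arbitrary knot, for instance one built from a complicated tangle with many negative crossings and very negative $TB(K)$, the natural monodromy may have no apparent positive factorization on the obvious open book, and the Legendrian-surgery presentation sketched above may require stabilizations that increase the number of $+1$ contact surgeries beyond what Stein fillability tolerates. The hardest step, and the real heart of the conjecture, is therefore to establish a structural theorem guaranteeing that for every knot the high-surgery open book eventually admits a positive factorization, or alternatively to construct Stein fillings of $S^3_r(K)$ by a fundamentally different method --- for instance by realizing these 4-manifolds as branched covers of Stein fillable pieces, or as abstract Lefschetz fibrations whose positivity is forced by topological constraints on the vanishing cycles rather than by a preferred Legendrian representative of $K$.
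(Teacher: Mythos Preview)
This statement is a \emph{conjecture} in the paper, attributed to Stipsicz, and the paper does not prove it. The paper's contribution is to verify the conjecture for several specific families (closed 3-braid L-space knots and their connected sums, certain two-bridge knots, torus knots and their connected sums, and the pretzel knots $P(-2l-1,2m+1,2n+1)$) by exhibiting explicit Stein handlebody diagrams. There is therefore no ``paper's own proof'' to compare against.

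Your proposal is not a proof either, and you say as much: you outline a program (extend the Stein-handlebody constructions, or find positive open-book factorizations) and then correctly identify the obstruction, namely that for an arbitrary knot there is no known uniform mechanism producing the needed positivity. The sentence beginning ``The hardest step, and the real heart of the conjecture, is therefore to establish a structural theorem\ldots'' is an honest admission that the key step is missing. Concretely: neither the continued-fraction/meridional-unknot trick nor the boundary-twist-dominates-monodromy heuristic is known to work in general; the first can introduce contact $(+1)$-surgeries that destroy fillability, and the second requires a positive factorization result for mapping classes that is itself open. So what you have written is a reasonable survey of possible approaches and their difficulties, but it does not constitute a proof of the conjecture, and no such proof exists in the paper.
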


If the answer to Question~\ref{ques:highsurgery} is yes, then Conjecture~\ref{conj:highsurgery} is true, but not vice-versa. It is known that some knots in $S^3$ satisfy the high surgery conjecture, see \cite{ls3, g1, mt, hp, c, cm}. However, as we will see in Section~\ref{sec:existence}, one cannot conclude that Conjecture~\ref{conj:highsurgery} holds for the knots considered in Theorems~\ref{thm:2bridgeknot}, \ref{thm:torusknots} and \ref{thm:pretzel} from the previously known results. 

In view of this phenomenon, we propose to define an invariant for knots in terms of the Stein fillability of surgeries on knots.

\begin{definition}
The {\it Stein fillable coefficient} of a knot $K$ in $S^3$ is defined as $$\inf\{r\in\mathbb{Q}\cup\{\infty\} \mid r\geq TB(K), \text{and}~S^{3}_{r'}(K)~\text{bounds a Stein domain for any}~r'\geq r\}.$$
\end{definition}
We denote the Stein fillable coefficient of $K$ by $Sfc(K)$.

For a knot $K$ in $S^3$, $m(K)$ is an invariant of $K$ introduced by Owens and Strle \cite{OwSt1}, and is defined as $$m(K)=\inf\{r\in \mathbb{Q}_{>0}\mid S^{3}_{r}(K)~\text{bounds a negative-definite 4-manifold}\}.$$ In some cases, $m(K)$ provides a lower bound on the Stein fillable coefficients.

\begin{proposition}\label{sfcandm}
Suppose $K\subset S^3$ is an L-space knot and $TB(K)=2g(K)-1$. Then $Sfc(K)\geq m(K)$.
\end{proposition}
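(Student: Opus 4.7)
The plan is to show that for every rational $r > Sfc(K)$ the surgered manifold $S^3_r(K)$ itself bounds a negative-definite $4$-manifold; this forces $m(K) \leq r$, and letting $r$ decrease to $Sfc(K)$ then yields $m(K) \leq Sfc(K)$. If $Sfc(K) = \infty$ the inequality is automatic, so I may assume $Sfc(K) < \infty$.

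Fix a rational $r > Sfc(K)$. By the definition of $Sfc(K)$, the $3$-manifold $S^3_r(K)$ bounds a Stein domain $W$. Since $K$ is an L-space knot and $r \geq Sfc(K) \geq TB(K) = 2g(K)-1$, the Ozsv\'ath--Szab\'o surgery classification for L-space knots implies that $S^3_r(K)$ is a rational homology sphere L-space.

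It remains to verify that $W$ is negative-definite. The vanishing $b_1(W) = 0$ is essentially topological: a Stein domain admits a handle decomposition with handles of index at most $2$, so $H^3(W;\mathbb{Q}) = 0$; Lefschetz duality then gives $H_1(W, \partial W;\mathbb{Q}) \cong H^3(W;\mathbb{Q}) = 0$, and the long exact sequence of the pair together with $H_1(\partial W;\mathbb{Q}) = 0$ forces $b_1(W) = 0$. The vanishing $b_2^+(W) = 0$ is the crucial Floer-theoretic input: by a theorem of Ozsv\'ath--Szab\'o, any weak symplectic filling of a rational homology sphere L-space has $b_2^+ = 0$, and every Stein filling is in particular a weak symplectic filling. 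Hence $W$ is negative-definite, so $r$ lies in the set defining $m(K)$, which gives $m(K) \leq r$ and therefore $m(K) \leq Sfc(K)$ upon taking the infimum. The only non-routine step is invoking the correct Ozsv\'ath--Szab\'o theorem forcing $b_2^+(W) = 0$ for symplectic fillings of L-spaces; the rest is bookkeeping with the Stein handle decomposition, Lefschetz duality, and the definitions of $Sfc$ and $m$.
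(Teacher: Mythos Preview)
Your proof is correct and follows essentially the same approach as the paper's: both argue that for $r$ beyond $Sfc(K)$ the surgered manifold is an L-space bounding a Stein domain, which must then be negative-definite by the Ozsv\'ath--Szab\'o theorem (Theorem~\ref{thm: negdefinite}), forcing $r \geq m(K)$. Your version is somewhat more careful in spelling out why $b_1(W)=0$ and in taking $r > Sfc(K)$ before passing to the infimum, but the core idea is identical.
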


Note that Lidman and Sivek conjecture that $TB(K)=2g(K)-1$ holds for any L-space knot $K$ \cite{LiSi}.

More generally, we formulate the following bolder question for links.

\begin{question}\label{ques:link}
Suppose $L=K_{1}\cup K_{2}\cdots\cup K_{n}$ is a link in $S^3$. Does the $(r_{1}, r_{2},\cdots, r_{n})$-surgery along $L$ yields a 3-manifold which admits a Stein fillable contact structure for the rational $r_{i}$ sufficiently large? where $i=1,2,\cdots,n$.
\end{question}

It is not hard to show that this question is true for Hopf link,  Whitehead link and Borromean ring. Moreover, we answer this question for the link $\mathbb{L}_n$ appeared in Theorem~\ref{Theorem:Main3} and the two-bridge links $K(a_1,a_2)$ with two components shown in Figure~\ref{figure:simple2bridge0}.

\begin{theorem}\label{thm:biglink}
The $(r_1, r_2)$-surgery along the link $\mathbb{L}_n$ shown in Figure~\ref{figure:Ln} yields a 3-manifold which admits a Stein fillable contact structure if the rationals $r_1>0$ and $r_2>4n+4$.
\end{theorem}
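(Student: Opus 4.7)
The plan is to realize the $(r_1,r_2)$-surgery on $\mathbb{L}_n$ as Legendrian surgery on a Legendrian link in $(S^3,\xi_{std})$ and invoke Eliashberg's theorem, which states that attaching Weinstein $2$-handles to the standard Stein ball along a Legendrian link yields a Stein domain whose boundary carries a Stein fillable contact structure, with the smooth surgery framing being one less than the Thurston-Bennequin number of each component.

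First, I would exploit a Rolfsen-twist reformulation to simplify the surgery description. The link $\mathbb{L}_n$ is built from a ``base link'' $\mathbb{L}_0$ by inserting $n$ right-handed full twists in the two-strand region controlled by $L_1$; this turns the untwisted unknot into $T(2,2n+1)$. Performing the corresponding Rolfsen twist along $L_1$ (an operation that preserves the underlying $3$-manifold) removes the twist box and converts $L_2$ back to an unknot, while shifting the surgery coefficient on $L_2$ by $-4n$ (reflecting the fact that the two strands of $L_2$ contribute linking $2$ with the disk bounded by $L_1$). The hypothesis $r_2>4n+4$ is precisely calibrated so that after this transformation the new surgery coefficient on the now-unknotted $L_2$ is strictly greater than $4$, placing us in the regime of the baseline link $\mathbb{L}_0$.

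Second, I would realize the resulting surgery description as Legendrian surgery in $(S^3,\xi_{std})$. Each component of the simplified link is a Legendrian unknot that can be drawn in the front projection and stabilized freely to drive its Thurston-Bennequin invariant as negative as needed. To accommodate the positive residual coefficients $r_1>0$ and $r_2-4n>4$, I would introduce auxiliary Legendrian unknots and apply slam-dunk/Kirby moves to break each positive surgery into a chain of Legendrian $(-1)$-surgeries on stabilized unknots, in the spirit of the constructions used in Theorems~\ref{thm:closed3braids}--\ref{thm:pretzel} and in the handling of the Hopf link mentioned above. Once every surgery coefficient is expressed as $tb-1$ of a Legendrian component in $(S^3,\xi_{std})$, Eliashberg's theorem produces the desired Stein filling.

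The main obstacle is the second step: carrying out the Kirby calculus so that \emph{every} resulting integral framing matches $tb-1$ of some Legendrian representative, and verifying that the auxiliary chains of unknots can be introduced simultaneously for both components without obstruction. The numerical bound $r_2>4n+4$ should drop out of this bookkeeping: $4n$ is the framing shift absorbed by the Rolfsen untwisting, and the residual $4$ is the baseline threshold needed for the unknotted component in $\mathbb{L}_0$ to admit a Legendrian chain realization after the twist box has been removed. No special difficulty is expected for $L_1$ since $r_1>0$ is otherwise unconstrained and an unknot admits arbitrarily stabilized Legendrian representatives.
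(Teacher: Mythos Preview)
There are two concrete gaps in your plan.

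First, the Rolfsen move in step 1 is along the wrong component. You assert that the two strands of $L_2$ ``contribute linking $2$ with the disk bounded by $L_1$,'' but the linking number of $L_1$ and $L_2$ in $\mathbb{L}_n$ is zero (this is used throughout Section~\ref{section: links}; see e.g.\ Lemma~\ref{lemma:-defcob}). Hence a Rolfsen twist along $L_1$ shifts the coefficient on $L_2$ by a multiple of the square of this linking number, namely by $0$, not by $-4n$, and it would moreover change $r_1$ in a way you have not tracked. The twist box in Figure~\ref{figure:Ln} sits on two parallel strands of $L_2$ and is not encircled by $L_1$. What the paper actually does is introduce a \emph{new} auxiliary unknot around that region with coefficient $-\frac{1}{n+1}$, producing a three-component surgery diagram with coefficients $(r_1,\, r_2',\, -\frac{1}{n+1})$ where $r_2' = r_2 - 4(n+1)$; it never reduces to the two-component link $\mathbb{L}_0$.

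Second, and more seriously, your step 2 omits the mechanism that makes the argument work. You commit to attaching only $2$-handles to the standard Stein ball, i.e.\ to Legendrian surgery in $(S^3,\xi_{std})$, and hope to absorb the positive coefficients into chains of stabilized unknots. But the paper's construction---like those in Theorems~\ref{thm:closed3braids}--\ref{thm:pretzel} you invoke as models---hinges on introducing a \emph{1-handle}. After isotopies and a dotted-circle exchange (Figure~\ref{figure:BigLn}(5)--(8)), the three components become Legendrian knots in $S^1 \times S^2$ carrying coefficients $-\frac{1}{r_1}$, $-\frac{1}{r_2'}$, $-\frac{1}{n+1}$, all negative, with the relevant component having $tb = 0$, so Proposition~\ref{char1} applies. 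In $(S^3,\xi_{std})$ a Legendrian unknot has $tb \le -1$, so a coefficient such as $-\frac{1}{n+1} \in [-\tfrac{1}{2},0)$ for $n\ge 1$ can never satisfy $r < tb$ there; the $1$-handle is precisely what allows $tb = 0$. Your chain-of-unknots idea does not substitute for this, and the Gompf diagrams behind the Whitehead and Borromean cases you would ultimately appeal to also rely on $1$-handles.
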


\begin{theorem}\label{thm:biglink1}
Let $\mathbb{L}$ be a two-bridge link $K(a_1,a_2)$ with two components in $S^3$ of the form shown in Figure~\ref{figure:simple2bridge0}, then $S^3_{r_1, r_2}(\mathbb{L})$ admits a Stein fillable contact structure if both the rationals $r_1$ and $r_2$ are sufficiently large.
\end{theorem}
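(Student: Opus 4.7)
The plan is to exhibit $S^{3}_{r_1,r_2}(\mathbb{L})$, for $r_1,r_2$ sufficiently large, as Legendrian $(-1)$-contact surgery on a Legendrian link in $(S^{3},\xi_{\mathrm{std}})$. By the Eliashberg--Gompf theorem, this would immediately produce a Stein fillable contact structure on $S^{3}_{r_1,r_2}(\mathbb{L})$.

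Because a two-component link of bridge number two must have each component unknotted (each component contributes at least one bridge), I can take both $L_1$ and $L_2$ as Legendrian unknots in $(S^{3},\xi_{\mathrm{std}})$. I would first produce an explicit Legendrian front projection of $K(a_1,a_2)$ from the diagram in Figure~\ref{figure:simple2bridge0}, reading off $tb(L_i)$, $\mathrm{rot}(L_i)$, and the linking number $\ell=\mathrm{lk}(L_1,L_2)$ together with the clasp structure determined by $(a_1,a_2)$.

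Next, I would convert the $(r_1,r_2)$-surgery on $\mathbb{L}$ into a Legendrian $(-1)$-contact surgery diagram on a larger Legendrian link $\widetilde{\mathbb{L}}$. The key Kirby manoeuvres involved are: adding Legendrian meridional unknot chains to each of $L_1$ and $L_2$, so that iterated slam dunks realize the prescribed surgery coefficients as continued fractions; and performing Rolfsen twists (equivalent to handle slides over $\pm 1$-framed unknots) to trade large positive coefficients on $L_i$ for chains of small negative coefficients on auxiliary components. At the end of this process, every component of $\widetilde{\mathbb{L}}$ should carry a smooth surgery coefficient equal to $tb-1$ of its Legendrian representative, so that the whole diagram is a Legendrian $(-1)$-contact surgery diagram; Eliashberg--Gompf then yields the Stein filling.

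The principal obstacle is precisely the realization of large positive surgery coefficients by Legendrian $(-1)$-contact surgery: a direct meridional chain on a Legendrian unknot produces only effective coefficients in the interval $(-\infty,-1)$, so something more than meridional chains is needed to absorb the large positive $r_i$. To get around this, I would introduce auxiliary Legendrian components with positive Thurston--Bennequin number, for instance appropriately chosen Legendrian positive torus knots, linked with the rest of the diagram so that, after Kirby moves, the resulting 3-manifold is diffeomorphic to $S^{3}_{r_1,r_2}(\mathbb{L})$ while still respecting the linking and clasp pattern of $K(a_1,a_2)$. The hypothesis that $r_1$ and $r_2$ be sufficiently large guarantees that the chains have enough length to absorb the stabilizations required for all linking and framing data to fit Legendrian constraints, ensuring that the final Legendrian $(-1)$-contact surgery diagram exists and thereby gives the desired Stein filling.
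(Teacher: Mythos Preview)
Your overall framework---transform the surgery presentation via Kirby moves until every component carries a coefficient below its Thurston--Bennequin number, then invoke Gompf's criterion (Proposition~\ref{char1})---is indeed the paper's strategy. But the key step, which you flag as ``the principal obstacle,'' is not actually resolved in your proposal. You correctly observe that meridional chains alone cannot absorb a large positive coefficient, and then suggest introducing auxiliary Legendrian torus knots with positive $tb$ ``linked with the rest of the diagram so that, after Kirby moves, the resulting 3-manifold is diffeomorphic to $S^{3}_{r_1,r_2}(\mathbb{L})$.'' That is not a method; it is a restatement of the goal. Nothing in your outline tells us which torus knots to use, how to link them into the two-bridge pattern, or why the resulting diagram would still present $S^{3}_{r_1,r_2}(K(a_1,a_2))$.

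The paper's actual mechanism is quite different and exploits the specific structure of $K(a_1,a_2)$. First, the $a_i$-twist regions are unwound by introducing auxiliary $(-\tfrac{1}{q_i})$-framed unknots encircling the twist strands; this replaces $\mathbb{L}$ by a simpler four-component link on which the components now carry coefficients $r_i' = r_i - q_1 - q_2$. The large positive $r_i'$ are then dealt with by a slam-dunk/handle-slide/blow-down routine (the same one used in the proof of Theorem~\ref{thm:torusknots}): write $r_i' = 2 - \tfrac{1}{r_i''}$ so that $r_i''<0$ for $r_i$ large, introduce a short chain of $(-1)$- and $2$-framed unknots, slide and blow down to leave only components with coefficients $\le -1$. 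The endpoint is a link of unknots, each with coefficient strictly below the $tb$ of its Legendrian realization, so Proposition~\ref{char1} applies. No torus knots enter; the work is done entirely by Kirby calculus on chains of unknots, and the trick $r' = 2 - \tfrac{1}{r''}$ is what converts a large positive coefficient into a negative one on an auxiliary component. Your outline is missing this idea, and without it the argument does not close.
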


\medskip
\begin{acknowledgements}
The authors would like to thank John Etnyre, Matt Hedden, Jianfeng Lin, Beibei Liu, B\"ulent Tosun and Xiao Wang for some correspondence. We are also grateful to the
referees for valuable suggestions.
The first author was partially supported by National Key R\&D Program of China (No. 2020YFA0712800) and Grant No. 12131009 of the National Natural Science Foundation of China. The second author was partially supported by Grant No. 12271349 of the NNSFC. The third author was partially supported by grants from the Research Grants Council of the Hong Kong Special Administrative Region, China (Project No. 14300018 and 14301819).
\end{acknowledgements}

\section{Preliminaries} \label{sec: preliminaries}

In this section, we recall some  tools used in this paper. First, Ozsv\'ath and Szab\'o proved that the symplectic fillings of L-spaces must be negative-definite.
\begin{theorem}\cite{OSzGen}\label{thm: negdefinite}
An L-space $Y$ has no symplectic semi-filling with disconnected boundary; and all its symplectic fillings have $b^{+}_{2}(W)=0$. In particular, $Y$ admits no taut foliation.
\end{theorem}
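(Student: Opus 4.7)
The plan is to exploit the Ozsv\'ath--Szab\'o mixed invariants of closed symplectic 4-manifolds together with the vanishing of $HF^+_{\mathrm{red}}$ for L-spaces. Suppose for contradiction that $(W,\omega)$ is either a symplectic filling of $(Y,\xi)$ with $b_2^+(W)>0$, or a symplectic semi-filling whose boundary contains $Y$ together with at least one other contact component.

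First I would embed $W$ into a closed symplectic 4-manifold using Eliashberg's symplectic cap construction (together with the refinements of Etnyre--Honda which allow one to enlarge $b_2^+$ of the cap). This produces a closed symplectic $(X,\Omega)$ with a separating hypersurface along which $X$ decomposes as $X=W\cup_Y Z$, where $\partial Z=-Y$ and both $b_2^+(W), b_2^+(Z)\geq 1$ can be arranged (in the semi-filling case, the extra boundary components of $W$ are absorbed into $Z$). By the Ozsv\'ath--Szab\'o analog of Taubes's $SW=Gr$ theorem, the closed-manifold mixed invariant $\Phi_{X,\mathfrak{s}_\Omega}$ for the canonical Spin$^c$ structure $\mathfrak{s}_\Omega$ is nontrivial.

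The core step is a gluing argument: because $Y$ separates $X$ into two pieces, each with $b_2^+>0$, the mixed invariant $\Phi_{X,\mathfrak{s}_\Omega}$ factors as a pairing of relative Heegaard Floer invariants of $W$ and $Z$ that passes through the reduced group $HF^+_{\mathrm{red}}(Y)$. Since $Y$ is an L-space, $HF^+_{\mathrm{red}}(Y)=0$, so this factored invariant must vanish, contradicting the previous step. This simultaneously rules out fillings with $b_2^+(W)>0$ and semi-fillings with disconnected boundary.

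For the final assertion, if $Y$ admitted a taut foliation, then Eliashberg--Thurston's perturbation theorem would provide a symplectic structure on $Y\times[-1,1]$ for which both boundary components are weakly convex, giving a symplectic semi-filling with disconnected boundary $Y\sqcup(-Y)$, contradicting the first half. The main obstacle is the middle factorization step: one must carefully assemble the $HF^+$/$HF^-$ cobordism maps, exploit the long exact sequences relating them to $HF^\infty$ and $HF^+_{\mathrm{red}}$, and check that the positivity of $b_2^+$ on each side of $Y$ forces the naturally defined mixed invariant to be computed entirely through the reduced group, which is where the L-space hypothesis delivers the contradiction.
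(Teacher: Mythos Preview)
The paper does not supply its own proof of this theorem: it is quoted verbatim from Ozsv\'ath--Szab\'o \cite{OSzGen} as background in the Preliminaries section, with no argument given. So there is nothing in the paper to compare your proposal against.

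That said, your outline is essentially the original Ozsv\'ath--Szab\'o argument from \cite{OSzGen}. The structure---cap off to a closed symplectic $X$ with $b_2^+>1$, invoke non-vanishing of the closed mixed invariant $\Phi_{X,\mathfrak{s}_\omega}$, then use the composition law to factor $\Phi_X$ through $HF^+_{\mathrm{red}}(Y)$ along the separating $Y$, and finally use $HF^+_{\mathrm{red}}(Y)=0$ for an L-space---is exactly how the result is proved there. Your treatment of the taut foliation corollary via Eliashberg--Thurston is likewise the standard one. One technical point you gloss over (and correctly flag as the ``main obstacle'') is that for genuinely weak fillings the cobordism maps and the factorization must be carried out with $\omega$-twisted coefficients; the L-space hypothesis still gives vanishing of the twisted reduced group, but this is where the actual work in \cite{OSzGen} lies.
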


For a rational homology sphere $Y$ with a Spin$^{c}$ structure $\mathfrak{s}$, $d(Y,\mathfrak{s})$ denotes the {\it correction term} or the {\it d-invariant} defined by Ozsv\'ath and Szab\'o in \cite{OSzdinv}. Owens and Strle give a necessary condition for a rational homology sphere to bound a negative-definite 4-manifold using the d-invariants.

\begin{proposition}\cite[Proposition 5.2]{OwSt}\label{proposition: d0}
Let $Y$ be a rational homology sphere with $|H_{1}(Y;\mathbb{Z})|$ $=zw^2$, where both $z$ and $w$ are positive integers, and $z$ is square-free. If $Y$ bounds a negative-definite four-manifold $X$, then $$\max\limits_{\mathfrak{t}\in \text{Spin}^{c}(Y)} 4d(Y,\mathfrak{t})\geq \left\{
                \begin{array}{ll}
                  1-\frac{1}{z},~~~ \text{if}~~ z~~ \text{is odd},\\
                  1, ~~~~~~~~~~~~~~ \text{if}~~ z~~ \text{is even}.

                \end{array}
              \right.$$
\end{proposition}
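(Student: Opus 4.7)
The plan is to combine the Ozsv\'ath--Szab\'o $d$-invariant inequality for negative-definite fillings with a lattice-theoretic bound on characteristic vectors that refines Elkies' theorem.

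First, I would invoke the Ozsv\'ath--Szab\'o inequality: since $X$ is a smooth, compact, negative-definite four-manifold with $\partial X = Y$, for every $\mathrm{Spin}^c$ structure $\mathfrak{s}$ on $X$ restricting to $\mathfrak{t}$ on $Y$,
$$c_1(\mathfrak{s})^2 + b_2(X) \;\leq\; 4\,d(Y,\mathfrak{t}).$$
Since every $\mathfrak{t}\in \mathrm{Spin}^c(Y)$ arises as such a restriction, it suffices to exhibit a single $\mathfrak{s}$ on $X$ with $c_1(\mathfrak{s})^2 \geq 1 - b_2(X) - 1/z$ in the odd case, respectively $c_1(\mathfrak{s})^2 \geq 1 - b_2(X)$ in the even case.

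Next I would translate the problem into lattice theory. Let $L := (H^2(X;\mathbb{Z})/\mathrm{Tors},\,-Q_X)$, a positive-definite integer lattice of rank $n = b_2(X)$. Its discriminant equals $|\mathrm{Tors}\,H_1(X;\mathbb{Z})|^2 \cdot |H_1(Y;\mathbb{Z})|$, so the square-free part of $\det L$ is exactly $z$. By Wu's formula, the first Chern classes of $\mathrm{Spin}^c$ structures on $X$ are precisely the characteristic vectors of $L$ (elements $\xi\in L$ with $\xi\cdot v \equiv v\cdot v \pmod 2$ for all $v\in L$), and $c_1(\mathfrak{s})^2 = -|\xi|^2$. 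The proposition thus reduces to the lattice estimate
$$\min_{\xi \in L\ \text{characteristic}} |\xi|^2 \;\leq\; \begin{cases} n - 1 + 1/z, & z\ \text{odd}, \\ n - 1, & z\ \text{even}. \end{cases}$$

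The main obstacle is this lattice estimate. For unimodular $L$ (the case $z=1$) it reduces to Elkies' classical theorem $\min_\xi |\xi|^2 \leq n$. For non-unimodular $L$ the improvement must come from the non-trivial discriminant form on $L^*/L$: the presence of a non-zero element of order divisible by $z$ produces a dual vector $\eta\in L^*$ such that $\xi - 2\eta$ is again a characteristic vector whose squared norm is controllably smaller on average. I would carry this out by an averaging argument, summing $|\xi + 2\eta|^2$ over $\eta$ ranging through a suitable subgroup of $L^*/L$ and showing that the average is smaller than $n$ by at least $1 - 1/z$; in the even case an additional $\mathbb{Z}/2$-parity constraint coming from the $2$-primary summand of $L^*/L$ sharpens the bound to the clean integer $n-1$. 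Making this averaging rigorous, and treating the parity of $z$ carefully, is the technical heart of the argument.
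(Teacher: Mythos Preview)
The paper does not prove this proposition at all: it is quoted verbatim from Owens--Strle \cite[Proposition~5.2]{OwSt} and used as a black box, with no argument given. So there is no ``paper's own proof'' to compare your proposal against.

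That said, your outline is broadly the strategy Owens--Strle use, but two points need correction. First, the characteristic vectors you want are elements of the \emph{dual} lattice $L^*$, not of $L$: for a non-unimodular intersection form, $c_1(\mathfrak{s})\in H^2(X;\mathbb{Z})$ lives (mod torsion) in $L^*$, and its square is computed in $L^*\otimes\mathbb{Q}$. This is not cosmetic --- the whole point of the refinement over Elkies is that one can find a short characteristic covector in $L^*\setminus L$, and your formulation ``elements $\xi\in L$'' precludes exactly that. Second, your averaging sketch is too loose to constitute a proof: the actual lattice result (the main theorem of \cite{OwSt}) is not obtained by a simple average over a subgroup of $L^*/L$ but requires a careful analysis of the discriminant form and its metabolizers, with the even/odd dichotomy for $z$ arising from whether or not the $2$-primary part of $L^*/L$ admits a certain isotropic element. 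You have identified the right two ingredients (the $d$-invariant inequality and an Elkies-type bound), but the second ingredient is the substantive content of the Owens--Strle paper and your paragraph does not yet supply it.
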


\medskip
Hence, $Y$ does not bound a negative-definite 4-manifold as long as $\max\limits_{\mathfrak{t}\in \text{Spin}^{c}(Y)} d(Y,\mathfrak{t})< 0.$
Moreover, if $|H_{1}(Y;\mathbb{Z})|$ is not a square, then $Y$ does not bound a negative-definite 4-manifold as long as $\max\limits_{\mathfrak{t}\in \text{Spin}^{c}(Y)} d(Y,\mathfrak{t})< \frac{1}{6}.$
In particular, this together with Theorem \ref{thm: negdefinite} implies that an L-space with small d-invariants admits no symplectic fillable contact structure.  Most of our nonexistence results in this paper will be proved by this observation.  

The following lemma of Owens and Strle is also very useful.

\begin{lemma}\cite[Lemma 2.6]{OwSt1}\label{lemma:negdefcob}
Let $K$ be a knot in $S^3$ and let $r, s$ be rational numbers with $r>s>0$.
Then there exists a negative-definite two-handle cobordism from $S^{3}_{s}(K)$ to $S^{3}_{r}(K)$.
\end{lemma}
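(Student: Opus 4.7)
The plan is to realize $W$ as a stack of elementary 2-handle attachments, each adding a meridian of $K$ with negative framing, with the number of handles dictated by an Egyptian fraction expansion of $r-s$. For any rational $s_0>0$ and positive integer $n$, the elementary building block is the 4-manifold $W_n$ obtained from $S^3_{s_0}(K)\times I$ by attaching a single 2-handle along a small meridian $\mu$ of $K$ in $S^3_{s_0}(K)\times\{1\}$ with framing $-n$. The slam-dunk move applied to the two-component Kirby diagram consisting of $K$ at slope $s_0$ and $\mu$ at slope $-n$ identifies the new top boundary as $S^3_{s_0-1/(-n)}(K)=S^3_{s_0+1/n}(K)$. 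Since $S^3_{s_0}(K)$ is a rational homology sphere (as $s_0\ne 0$), the long exact sequence of the pair $(W_n,S^3_{s_0}(K))$ yields $b_2(W_n)=1$.

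Next I would show $W_n$ is negative-definite by computing its signature through Novikov additivity. Fix a negative continued fraction expansion $s_0=c_0-1/(c_1-1/(\cdots-1/c_\ell))$ with $c_1,\ldots,c_\ell\geq 2$, giving a standard positive-definite 2-handle cobordism $W_{s_0}$ from $S^3$ to $S^3_{s_0}(K)$ with $b_2(W_{s_0})=\sigma(W_{s_0})=\ell+1$ and tridiagonal linking matrix $A$ satisfying the classical identity $(A^{-1})_{1,1}=1/s_0$. The concatenation $\tilde W=W_{s_0}\cup W_n$ has a block linking matrix with $A$ in the top-left, $-n$ in the bottom-right, and a single entry $1$ in the off-diagonal positions recording the linking of $\mu$ with $K$. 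A Schur complement with respect to $A$ reduces the bottom-right block to the scalar $-n-(A^{-1})_{1,1}=-n-1/s_0<0$, so $\sigma(\tilde W)=(\ell+1)-1=\ell$. Novikov additivity across the rational homology sphere $S^3_{s_0}(K)$ then yields $\sigma(W_n)=\sigma(\tilde W)-\sigma(W_{s_0})=-1=-b_2(W_n)$, so $W_n$ is negative-definite.

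For the general case, I would write $r-s=\sum_{i=1}^{k}1/n_i$ with positive integers $n_i$ using the greedy (Sylvester--Fibonacci) algorithm for Egyptian fraction decompositions, set $s_i=s+\sum_{j\leq i}1/n_j$, and stack the elementary cobordisms $W_{n_i}$ from $S^3_{s_{i-1}}(K)$ to $S^3_{s_i}(K)$. Each intermediate boundary is a rational homology sphere, so repeated application of Novikov additivity gives $b_2(W)=k$ and $\sigma(W)=-k$, hence $W$ is the desired negative-definite 2-handle cobordism from $S^3_s(K)$ to $S^3_r(K)$. The principal technical obstacle is the signature computation in the elementary step: the attaching meridian $\mu$ is $p$-torsion rather than null-homologous in $S^3_{s_0}(K)$ when $s_0=p/q$ with $p>1$, so the intersection form of $W_n$ is not captured by the framing alone, and enlarging to $\tilde W$ with integer linking matrix is what lets one compute it rigorously.
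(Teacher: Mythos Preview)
The paper does not prove this lemma---it is quoted from Owens--Strle \cite{OwSt1}, and the later two-component link version (Lemma~\ref{lemma:-defcob}) is likewise deferred to their argument---so there is no in-paper proof to compare against.

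Your proof is correct. The slam-dunk identification of the outgoing boundary as $S^3_{s_0+1/n}(K)$ is right; the Schur-complement signature calculation for $\tilde W$ (yielding the residual scalar $-n-(A^{-1})_{1,1}=-n-1/s_0<0$) and the appeal to Novikov additivity across the rational homology sphere $S^3_{s_0}(K)$ are valid; and the greedy Egyptian-fraction decomposition of $r-s$, which terminates for any positive rational, lets you stack the elementary cobordisms. The one point worth stating explicitly is that for every $s_0>0$ the negative continued-fraction expansion $s_0=[c_0,c_1,\dots,c_\ell]^-$ with $c_0\geq 1$ and $c_i\geq 2$ for $i\geq 1$ exists and the associated tridiagonal matrix $A$ is genuinely positive-definite (the leading principal minors are nondecreasing and $\geq 1$ under these hypotheses); you rely on both facts.

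The Owens--Strle argument is in the same spirit---negatively framed meridians and continued-fraction bookkeeping---but is organised directly via the expansions of $s$ and $r$ rather than through an Egyptian-fraction decomposition of $r-s$. Your packaging has the merit of isolating a single uniform elementary step whose negative-definiteness is verified once, at the cost of introducing the auxiliary positive-definite filling $W_{s_0}$ to make that verification; their more direct route avoids that detour but requires tracking how the chain-link description changes as the slope moves from $s$ to $r$.
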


An efficient method of constructing an L-space is Dehn surgery along an L-space knot or an L-space link. The d-invariants of such L-spaces depend on the Alexander polynomials or multi-variable Alexander polynomials.



More specifically, one can use the Alexander polynomials to compute the d-invariants of the Dehn surgeries along L-space knots as follows.
\begin{proposition}\cite[Proposition 6.1]{OwSt}\label{proposition: d1}
Let $K\subset S^{3}$ be an L-space knot. Then for any integer $p>0$ the d-invariants of the $p$-surgery on $K$ are given by $$d(S^{3}_{p}(K),i)=d(S^{3}_{p}(U),i)-2t_{i}(K)$$ for $|i|\leq p/2$, where $U$ is the unknot,
\begin{equation} \label{eq:dinvofLp1}
d(S^{3}_{p}(U),i)=\frac{(p-2i)^2}{4p}-\frac{1}{4},
\end{equation}
and
\begin{equation}
\label{t_i}
t_{i}(K)=\sum\limits_{j>0}ja_{|i|+j}
\end{equation}
where the $a_i$ are the coefficients of the symmetrized Alexander polynomial of $K$.
\end{proposition}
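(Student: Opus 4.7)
The plan is to deduce this formula from the Ozsv\'ath--Szab\'o rational surgery formula (the mapping cone formula) combined with the special structure of the knot Floer chain complex for L-space knots. The first key input is that if $K$ is an L-space knot, then $CFK^\infty(K)$ is completely determined by the symmetrized Alexander polynomial of $K$ and has a staircase shape. Consequently the subquotient complexes $A^+_s$ indexed by Alexander grading $s$ each have a single $U$-tower $\mathcal{T}^+$, and the bottom of that tower sits in Maslov grading exactly $-2V_s(K)$, where the integer $V_s(K)$ coincides with the torsion coefficient $t_s(K)$; the equality $V_s = t_s$ is forced by the combinatorics of the staircase together with the identity $\tau(K) = g(K)$ for L-space knots.

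Next I would apply the Ozsv\'ath--Szab\'o mapping cone formula: $HF^+(S^3_p(K),i)$ is the homology of a mapping cone assembled from the $A^+_s$ (for $s \equiv i \pmod p$), from copies of $B^+_s = HF^+(S^3) = \mathcal{T}^+$, and from the structure maps $v^+_s : A^+_s \to B^+_s$ and $h^+_s : A^+_s \to B^+_{s-p}$. For an L-space knot these maps are particularly simple on towers: $v^+_s$ is multiplication by $U^{V_s(K)}$ on $\mathcal{T}^+$ and $h^+_s$ is the identity (up to a degree shift). Running the same formula for the unknot $U$, every $V_s(U) = 0$ and $A^+_s(U) = B^+_s$, so the resulting cone computes the well-known lens space correction term $d(L(p,1),i) = (p-2i)^2/(4p) - 1/4$. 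For $|i| \leq p/2$ only the summands with $s$ close to $i$ contribute to the correction term, and comparing the two mapping cones summand by summand, the $U$-tower in $HF^+(S^3_p(K),i)$ is obtained from the $U$-tower in $HF^+(S^3_p(U),i)$ by shifting the bottom down by exactly $2V_i(K) = 2t_i(K)$. This yields
\begin{equation*}
d(S^3_p(K),i) \;=\; d(S^3_p(U),i) \;-\; 2t_i(K).
\end{equation*}

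The main obstacle is the bookkeeping of Maslov gradings in the mapping cone: one must carefully identify the Spin$^c$ structures on $S^3_p(K)$ with integers in $[-p/2, p/2]$, match them with the correct $s$-summands in the cone, and verify that it is the shift from $v^+_i$ that controls the bottom Maslov grading of the $U$-tower while the $h^+$ maps and the other $A^+_s$ summands contribute only to $U$-torsion (hence do not affect $d$). For large $p$ this collapses to the usual large surgery formula $d = -2V_i$ relative to the unknot and the statement is immediate; for general $p > 0$ one must run the full cone, but the L-space condition ensures the towers align cleanly and the argument goes through uniformly in $p$.
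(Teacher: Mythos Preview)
The paper does not give a proof of this proposition at all: it is stated in the Preliminaries section with the citation \cite[Proposition~6.1]{OwSt} and used as a black box. So there is no ``paper's own proof'' to compare your attempt against.

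That said, your sketch is the standard route to this formula (essentially the argument in Owens--Strle, or equivalently the Ni--Wu computation of $d$-invariants of surgeries). One imprecision worth flagging: the map $h^+_s$ is not the identity in general---it is multiplication by $U^{H_s}$ on the tower, with $H_s = V_{-s}$. The correct statement is that the $d$-invariant in the Spin$^c$ structure labeled $i$ is $d(L(p,1),i) - 2\max(V_i, H_{i})$ (equivalently $-2\max(V_i,V_{p-i})$ after using the symmetry and the $\bmod\ p$ identification), and the restriction $|i|\le p/2$ is precisely what forces this maximum to be $V_i = t_i(K)$. Your paragraph about ``only the summands with $s$ close to $i$ contribute'' is gesturing at this, but the actual mechanism is the comparison $V_i \ge V_{p-i}$ for $0\le i\le p/2$, not a vanishing of other summands. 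With that correction the argument is sound.
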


Meanwhile, Gorsky, Liu and Moore established a formula for the d-invariants for Dehn-surgeries along two-component L-space links with linking number $0$. We tailor their result to our needs.
\begin{theorem} \cite[Theorem 1.1]{glm}\label{thm: d2}
The $d$-invariants of positive integral surgeries on a two-component L-space link with linking number zero can be computed as follows:  $$d(S^{3}_{p_1,p_2}(\mathbb{L}), (i_1, i_2))=d(S^{3}_{p_1}(U),i_1)+d(S^{3}_{p_2}(U),i_2)-2\max\{h(s_{\pm\pm}(i_1,i_2))\},$$ where $U$ is the unknot, $p_1$ and $p_2$ are positive integers, and $s_{\pm\pm}(i_1,i_2)=(s_{\pm}^{(1)},s_{\pm}^{(2)})$ are four lattice points in Spin$^c$-structure $(i_1,i_2)$ which are closest to the origin in each quadrant.
\end{theorem}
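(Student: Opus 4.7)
The plan is to invoke the Manolescu--Ozsv\'ath link surgery formula, specialized to L-space links. For a 2-component L-space link $\mathbb{L}$ with linking number zero, that machinery identifies the chain complex computing $HF^-(S^3_{p_1,p_2}(\mathbb{L}))$ with a lattice chain complex whose generators are indexed by $\mathbf{s}=(s_1,s_2)\in\mathbb{Z}^2$, each placed in an absolute Maslov grading determined by the $h$-function $h(\mathbf{s})$ plus a universal unknot-surgery shift, and with differentials coming from the link surgery exact triangles. Because $\mathbb{L}$ is an L-space link and the surgery coefficients $p_1,p_2$ are positive integers, $S^3_{p_1,p_2}(\mathbb{L})$ is itself an L-space; hence in each Spin$^c$-structure $(i_1,i_2)$ the homology $HF^+$ reduces to a single $\mathcal{T}^+$-tower, and $d(S^3_{p_1,p_2}(\mathbb{L}),(i_1,i_2))$ is read off as the top Maslov grading of a surviving generator of the lattice complex restricted to the sublattice $S(i_1,i_2)=\{\mathbf{s}\in\mathbb{Z}^2 : s_j\equiv i_j\pmod{p_j},\ j=1,2\}$.

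Next I would establish the monotonicity of $h$ for a linking-number-zero L-space link. The $h$-function is defined by an inclusion--exclusion alternating sum of $H$-functions over the four corners of a unit box, and for L-space links each consecutive difference $H(\mathbf{s}+e_j)-H(\mathbf{s})$ takes values in $\{0,-1\}$ by the surgery exact triangle together with the Alexander-polynomial identity; with linking number zero, these pieces are independent enough that combining them yields: within each closed quadrant based at the origin, $h$ is non-increasing along each coordinate ray as one moves outward. Consequently, when maximizing $h$ over the discrete sublattice $S(i_1,i_2)$, the candidate maxima within each quadrant occur at the lattice point of $S(i_1,i_2)$ lying closest to the origin inside that quadrant, which is precisely $s_{\pm\pm}(i_1,i_2)$; no other lattice point of $S(i_1,i_2)$ can produce a larger value of $h$.

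To finish, the baseline $d(S^3_{p_1}(U),i_1)+d(S^3_{p_2}(U),i_2)$ arises by specializing the Manolescu--Ozsv\'ath formula to the 2-component unlink, where $h\equiv 0$ and the lattice complex decomposes as a tensor product of two knot surgery complexes on $U$; the correction terms add by the Ozsv\'ath--Szab\'o connected-sum formula for $d$-invariants \cite{OSzdinv}, which matches the universal shift identified in step one. Subtracting $2\max\{h(s_{\pm\pm}(i_1,i_2))\}$ from this baseline accounts for the contribution of the $h$-function of $\mathbb{L}$, yielding the stated formula. The main obstacle will be the monotonicity argument in the second step: although the one-variable analogue for L-space knots is well known, the two-variable statement is delicate because $h$ need not be monotone along arbitrary directions of $\mathbb{Z}^2$, and one must verify that the maximum over all of $S(i_1,i_2)$ really collapses to the four corner candidates. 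This relies on the full large-surgery identification of the link Floer complex with the lattice complex and on the vanishing of mixed second differences of $H$ forced by linking number zero, so any subtlety in the Manolescu--Ozsv\'ath perfect formula for L-space links propagates into this step.
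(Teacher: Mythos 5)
The paper does not actually prove this statement: it is imported verbatim (in slightly tailored form) from Gorsky--Liu--Moore \cite[Theorem 1.1]{glm}, so there is no internal proof to compare yours against. Your outline does follow the same general strategy as that source --- the Manolescu--Ozsv\'ath link surgery formula, the identification of the resulting complex with a lattice complex graded via the $H$-function, and the reduction of the maximum of $h$ over the sublattice $S(i_1,i_2)$ to the four points $s_{\pm\pm}(i_1,i_2)$ closest to the origin in each quadrant.

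As a proof, however, your proposal has a genuine gap, and you locate the difficulty in the wrong place. The quadrant monotonicity of $h$ that you single out as the main obstacle is in fact elementary: for any L-space link one has $H_{\mathbb{L}}(\mathbf{s})-H_{\mathbb{L}}(\mathbf{s}+e_j)\in\{0,1\}$, while for the two-component unlink $H_{O}(\mathbf{s})-H_{O}(\mathbf{s}+e_j)$ equals $1$ when $s_j<0$ and $0$ when $s_j\geq 0$; subtracting gives immediately that $h=H_{\mathbb{L}}-H_{O}$ is non-increasing along each coordinate direction as one moves away from the origin, which is all that is needed to collapse the maximum to the four corner candidates (no ``mixed second difference'' input is required for this step). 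The real content of the theorem is your first step, which you assert rather than establish: that $d(S^{3}_{p_1,p_2}(\mathbb{L}),(i_1,i_2))$ equals the unlink baseline minus $2\max h$ over the surviving generators. For surgery coefficients that are not large, the truncated surgery complex has many generators in each Spin$^c$ structure joined by nontrivial differentials, and the bottom of the $\mathcal{T}^+$-tower in homology cannot be read off from the gradings of generators alone; one must compute the homology of the mapping cone and verify that the tower is pinned at the grading corresponding to the \emph{maximum} of $h$ (rather than a minimum or some min--max over the lattice). That computation is the bulk of the argument in \cite{glm}, and without it the stated formula is not justified.
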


Here, the h-function of $\mathbb{L}$ is determined by the Alexander polynomial of its sublinks as below.

Suppose $$\widetilde{\Delta}(\widehat{\mathbb{L}}):=t_1^{\frac{1}{2}}t_2^{\frac{1}{2}}\Delta(\widehat{\mathbb{L}})
=\sum\limits_{j_{1},j_{2}}a_{j_1,j_2}t_{1}^{j_1}t_{2}^{j_2},$$
where $\Delta(\widehat{\mathbb{L}})$ is the multi-variable Alexander polynomial of the link $\widehat{\mathbb{L}}$, and for $i=1,2$, $$\widetilde{\Delta}(L_i):=\frac{1}{1-t^{-1}}\Delta(L_i)=\sum\limits_{j}a^{i}_{j}t^{j},$$
where $\Delta(L_i)$ is the Alexander polynomial of each of its components $L_i$.

By \cite{gn} and \cite[Theorem 3.7]{bg}, the H-function for $\widehat{\mathbb{L}}$ can be computed by the formula

\begin{equation}\label{DefHfunction}
  H_{\widehat{\mathbb{L}}}(s_1, s_2)
  =\sum\limits_{j\geq s_{1}+1}a^{1}_{j}+\sum\limits_{j\geq s_{2}+1}a^{2}_{j}-\sum\limits_{\substack{j_1\geq s_1+1 \\ j_2\geq s_2+1}}a_{j_1,j_2},
\end{equation}
and the h-function for $\widehat{\mathbb{L}}$ is defined by  $$h(s_1,s_2)=H_{\widehat{\mathbb{L}}}(s_1,s_2)-H_{O}(s_1,s_2),$$ where $O$ is the two-component unlink. 

\bigskip

We also recall a topological characterization of Stein domains given by Eliashberg \cite{e1} and Gompf \cite{g}.

\begin{theorem}\cite[Theorem 1.3]{g}\label{char}
A smooth, compact, connected, oriented 4-manifold $X$ admits a Stein structure (inducing the given orientation) if and only if it can be presented as a handlebody by attaching 2-handles to a framed link in $\partial(D^{4}\cup \text{1-handles})=\#_{m} S^{1}\times S^{2}$, where the link is drawn in a Legendrian standard form and the framing coefficient on each link component $K$ is given by $tb(K)-1$.
\end{theorem}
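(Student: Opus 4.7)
The plan is to apply the Eliashberg--Gompf characterization (Theorem~\ref{char}) by exhibiting $S^3_{r_1,r_2}(\mathbb{L})$ as the boundary of a Stein 4-manifold obtained by attaching 2-handles to $D^4$ along a Legendrian link in which every framing equals $tb-1$. The argument parallels the one carried out for $\mathbb{L}_n$ in Theorem~\ref{thm:biglink}, but must be adapted to the symmetric two-bridge setting.

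First, I would Legendrian-realize the two-bridge link $\mathbb{L}=K(a_1,a_2)$ by converting the standard 4-plat diagram of Figure~\ref{figure:simple2bridge0} into a Legendrian front in $(S^3,\xi_{std})$. Since both $a_1,a_2$ are positive odd integers, each component is an unknot, and the resulting Legendrian representatives $\widehat{L}_1,\widehat{L}_2$ have Thurston--Bennequin numbers $t_1,t_2$ and linking number $\ell$ that can be read off directly from $a_1$ and $a_2$, with stabilization available to reduce $t_i$ further if needed.

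Next, for rationals $r_1,r_2$ sufficiently large, I would convert the surgery $(r_1,r_2)$ on $\mathbb{L}$ into integer surgery along an enlarged link $\mathbb{L}^+$ by attaching a chain of meridian unknots to each component $\widehat{L}_i$. Writing the appropriate transform of $r_i$ as a negative continued fraction $[b_1^{(i)},\ldots,b_{k_i}^{(i)}]^-$ with every $b_j^{(i)}\geq 2$, the chain appended to $\widehat{L}_i$ carries coefficients $-b_1^{(i)},\ldots,-b_{k_i}^{(i)}$, while the coefficient on $\widehat{L}_i$ itself is arranged (by Rolfsen twists / blow-ups that transfer integer framing onto the chain) to land exactly on $t_i-1$. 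Each meridian unknot is then realized as a Legendrian unknot obtained from the maximal-$tb$ unknot ($tb=-1$) by $b_j^{(i)}-2\geq 0$ stabilizations, so that its framing $-b_j^{(i)}$ equals $tb-1$; the components $\widehat{L}_i$ already satisfy this condition by construction. Applying Theorem~\ref{char} to $\mathbb{L}^+$ then produces a Stein 4-manifold whose boundary is $S^3_{r_1,r_2}(\mathbb{L})$.

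The main obstacle lies in the bookkeeping of the middle step: one must simultaneously arrange for both $\widehat{L}_1$ and $\widehat{L}_2$ to retain coefficient $t_i-1$ after the Rolfsen/continued-fraction procedure, despite the two components being linked with nontrivial $\ell$. This linking introduces cross-terms in the two-variable surgery matrix (the off-diagonal entry $\ell$) that interact with each blow-up on the meridian chain of one component and perturb the effective coefficient of the other. The hypothesis that $r_1,r_2$ are sufficiently large is precisely what is used to defeat this difficulty: it guarantees that the integer parts of the relevant continued fractions have enough slack to absorb these cross corrections into the meridian chains, and that every entry of each final continued fraction is at least $2$ so that a Legendrian stabilized unknot of the required $tb$ is always available. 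The remaining verification is a direct Kirby calculus check that the extended Legendrian diagram recovers $S^3_{r_1,r_2}(\mathbb{L})$, at which point Theorem~\ref{char} concludes the proof.
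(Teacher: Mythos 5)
The statement you were asked to prove is Theorem~\ref{char}, the Eliashberg--Gompf topological characterization of Stein domains: a compact, connected, oriented 4-manifold admits a Stein structure if and only if it has a handle decomposition whose 2-handles are attached along a Legendrian link in standard form with framings $tb-1$. This is a foundational result quoted verbatim from \cite[Theorem 1.3]{g}; the paper offers no proof of it, and neither does your proposal. What you have actually written is an outline of a proof of Theorem~\ref{thm:biglink1} (Stein fillability of large surgeries on the two-bridge links $K(a_1,a_2)$), which \emph{uses} Theorem~\ref{char} (in the form of Proposition~\ref{char1}) as its main tool. Invoking the statement as an ingredient in an application is not a proof of the statement, so as an answer to the stated problem the proposal is entirely off target.

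A genuine proof of Theorem~\ref{char} would require completely different input. For the ``if'' direction one needs Eliashberg's construction extending the standard Stein structure on $D^4\cup\text{1-handles}$ over Weinstein 2-handles attached along Legendrian knots with framing $tb-1$; for the ``only if'' direction one must show that every Stein domain admits such a presentation, which rests on Morse theory for plurisubharmonic functions and the fact that attaching circles of 2-handles can be isotoped to Legendrian position with handle framing equal to the contact framing minus one. None of these ideas appears in your write-up. (Even read as a sketch of Theorem~\ref{thm:biglink1}, your continued-fraction/meridian-chain scheme differs from, and is less explicit than, the paper's argument, which reaches the Stein handlebody of Figure~\ref{figure:BigLn1} by concrete Kirby moves --- slam-dunks, a handle slide, and a blow-down --- and handles the linking between the two components directly in the diagram; but that comparison is beside the point, since it is not the theorem you were asked to prove.)
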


In practice, we use the following more convenient characterization.

\begin{proposition}\cite[Proposition 5.3]{g}\label{char1}
Let $L$ be a Legendrian link in standard form in $\#_{m} S^{1}\times S^{2}$, with a rational coefficient $r_i$ assigned to each component $K_i$. If $r_{i}<tb(K_i)$ or $r_{i}=\infty$ for each $i$, then the manifold $M$ obtained by rational surgery on $L$ with these coefficients is the oriented boundary of a Stein surface.
\end{proposition}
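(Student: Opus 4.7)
The plan is to reduce to Theorem~\ref{char} by converting each rational surgery coefficient into a chain of integer surgeries via a negative continued fraction expansion combined with iterated slam-dunks. If some $r_i=\infty$, that component contributes nothing to the surgery, so I would simply delete it from the diagram; this leaves a link with finite rational coefficients whose surgery yields the same $M$. It therefore suffices to treat the case where every $r_i$ is a finite rational strictly less than $tb(K_i)$.

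For each such component $K_i$ I would expand
\[
r_i \;=\; a_0^{(i)} - \cfrac{1}{a_1^{(i)} - \cfrac{1}{a_2^{(i)} - \cfrac{1}{\ddots - \cfrac{1}{a_{n_i}^{(i)}}}}}
\]
with $a_j^{(i)}\le -2$ for $j\ge 1$; a standard induction shows that every rational admits such an expansion, and it forces $a_0^{(i)}=\lfloor r_i\rfloor$ when $r_i\notin\mathbb{Z}$ (and $a_0^{(i)}=r_i$ otherwise). In either case the hypothesis $r_i<tb(K_i)$ gives $a_0^{(i)}\le tb(K_i)-1$. Iterated slam-dunks convert the rational $r_i$-surgery on $K_i$ into an integer surgery on a chain $K_i,\,U_1^{(i)},\dots,U_{n_i}^{(i)}$, where each $U_j^{(i)}$ is a topological unknot meridional to its predecessor and the framings are $a_0^{(i)},a_1^{(i)},\dots,a_{n_i}^{(i)}$ respectively.

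Next I would promote this smooth chain to a Legendrian link in standard form. Stabilizing the given Legendrian $K_i$ a total of $tb(K_i)-1-a_0^{(i)}\ge 0$ times produces a Legendrian representative $K_i'$ of the same topological knot with $tb(K_i')=a_0^{(i)}+1$. Each unknot $U_j^{(i)}$ can be realized as a small Legendrian unknot inside a Darboux ball in a tubular neighborhood of its predecessor; choosing successively smaller Darboux balls keeps the whole chain disjoint from the remaining components of $L$. The maximal Legendrian unknot in a Darboux ball has $tb=-1$, and stabilizing it $-a_j^{(i)}-2\ge 0$ times produces $tb(U_j^{(i)})=a_j^{(i)}+1$. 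The resulting link is Legendrian in standard form in $\#_m S^1\times S^2$ with framing equal to $tb-1$ on every component and smooth surgery yielding $M$, so Theorem~\ref{char} exhibits $M$ as the boundary of a Stein surface.

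The main obstacle is essentially bookkeeping: one must verify existence of the negative continued fraction with the stated sign constraints and $a_0=\lfloor r\rfloor$ for non-integer $r$, and, more importantly, arrange the Darboux balls hosting the meridional unknots so that no two of them intersect or cross the rest of $L$, all while remaining compatible with the standard form of the Legendrian front. Each of these steps is routine, but together they constitute the essential technical content of the proposition.
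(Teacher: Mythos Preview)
The paper does not supply a proof of this proposition: it is quoted verbatim as \cite[Proposition~5.3]{g} and used as a black box throughout Section~\ref{sec:existence}. Your argument is essentially Gompf's original proof---expand each finite $r_i$ in a negative continued fraction with tail entries $\le -2$, reverse-slam-dunk to a chain of integer-framed unknots, then stabilize so that every framing equals $tb-1$ and invoke Theorem~\ref{char}---and it is correct as written; there is nothing in the paper to compare it against beyond the citation.
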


\section{Non-existence of symplectic fillable contact structures} \label{sec:nonexistence}

\subsection{Dehn surgeries along L-space knots}

According to Vafaee \cite{v}, the twisted torus knot $K_{n,m}$, as depicted in Figure \ref{figure:Knm}, is an L-space knot.

\begin{lemma}\label{alex1}
The Alexander polynomial of $K_{n,m}$ is
\begin{align*}
\Delta(K_{n,m})=&(-1)^{n-1}+\sum_{i=1}^{n-1}(-1)^{n-i-1}(t^i+t^{-i})-\sum_{k=1}^{m}(t^{n+3k-2}+t^{-n-3k+2})\\
&+\sum_{k=1}^m(t^{n+3k-1}+t^{-n-3k+1}). 
\end{align*}


\end{lemma}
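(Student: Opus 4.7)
The plan is to realize $K_{n,m}$ as the closure of the positive 3-braid
\[
\beta_{n,m} = (\sigma_{1}\sigma_{2})^{3m+2}\,\sigma_{1}^{2(n-2)}
\]
and apply the reduced Burau representation $\bar{\rho}\colon B_{3} \to GL_{2}(\mathbb{Z}[t^{\pm 1}])$, under which $\sigma_{1}$ maps to $\bigl(\begin{smallmatrix} -t & 1 \\ 0 & 1\end{smallmatrix}\bigr)$ and $\sigma_{2}$ maps to $\bigl(\begin{smallmatrix} 1 & 0 \\ t & -t\end{smallmatrix}\bigr)$. For any 3-braid $\beta$ the classical Burau formula yields $\det(I - \bar{\rho}(\beta)) \doteq (1+t+t^{2})\,\Delta_{\widehat{\beta}}(t)$ up to multiplication by a unit $\pm t^{k}$.

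First, I would verify that $\bar{\rho}((\sigma_{1}\sigma_{2})^{3}) = t^{3}\,I_{2}$; this reflects the fact that the full twist $\Delta^{2}=(\sigma_{1}\sigma_{2})^{3}$ is central in $B_{3}$. Consequently $\bar{\rho}((\sigma_{1}\sigma_{2})^{3m+2}) = t^{3m}\,\bar{\rho}((\sigma_{1}\sigma_{2})^{2})$, where the latter matrix is written down directly. Next, a one-line induction on $k$ gives
\[
\bar{\rho}(\sigma_{1}^{k}) = \begin{pmatrix} (-t)^{k} & S_{k}(t) \\ 0 & 1\end{pmatrix}, \qquad S_{k}(t)=\sum_{j=0}^{k-1}(-t)^{j}.
\]
Specializing to $k=2(n-2)$ and multiplying the three matrices produces an explicit $2\times 2$ expression for $\bar{\rho}(\beta_{n,m})$.

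The final step is to expand $\det(I-\bar{\rho}(\beta_{n,m}))$, verify that the resulting polynomial in $t$ is divisible by $1+t+t^{2}$, perform the division, and multiply by the appropriate power of $t$ to symmetrize. One then compares the resulting Laurent polynomial with the claimed formula term by term.

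The main obstacle is this last simplification: the raw determinant mixes terms involving $t^{3m+2}$, $t^{2n-4}$, and $S_{2n-4}(t)$, and both checking divisibility by $1+t+t^{2}$ and matching the quotient against the prescribed pattern, namely the alternating $(-1)^{n-i-1}$ coefficients in the middle band $|i|\le n-1$ together with the $\pm 1$ spikes at $\pm(n+3k-2)$ and $\pm(n+3k-1)$ for $1\le k\le m$, requires careful bookkeeping. Before attempting the general calculation I would sanity-check the formula in the two degenerate regimes: $n=2$, where $K_{2,m}=T(3,3m+2)$ is a torus knot with the classical Alexander polynomial $(t^{3(3m+2)}-1)(t-1)/((t^{3}-1)(t^{3m+2}-1))$, and $m=1$, where $K_{n,1}$ is the pretzel knot $P(-2,3,2n+1)$ whose Alexander polynomial is already documented in the literature. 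These checks pin down the signs and degree shifts and guide the simplification in the general case.
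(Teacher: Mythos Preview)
Your approach is essentially the same as the paper's: both compute the Alexander polynomial via the reduced Burau representation of the closed $3$-braid presentation of $K_{n,m}$. The paper writes the braid as $\sigma_{1}^{-2n+4}(\sigma_{1}^{-1}\sigma_{2}^{-1})^{3m+2}$ and assigns your matrices to $\sigma_{i}^{-1}$ rather than $\sigma_{i}$; this is only a convention mismatch and is invisible at the level of the Alexander polynomial. Instead of expanding $\det(I-\bar{\rho}(\beta))$ and then dividing by $1+t+t^{2}$, the paper invokes Birman's formula \cite[Eq.~(7)]{bi} directly, which produces the quotient
\[
\Delta(K_{n,m})=\frac{t^{n+3m}-t^{-n-3m}\bigl(-t^{2n-4}+t^{2n-5}-\cdots+t-1\bigr)t^{3m+2}+t^{-n-3m}}{t^{-1}+1+t}
\]
already symmetrized, so the ``main obstacle'' you flag (the division and the symmetrization power of $t$) is absorbed into that citation. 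Your proposed sanity checks at $n=2$ and $m=1$ are sound but unnecessary once Birman's formula is in hand.
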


\begin{proof}
The closed 3-braid presentation of $K_{n,m}$ is $ \sigma_{1}^{-2n+4}(\sigma_1^{-1}\sigma_{2}^{-1})^{3m+2}$. Here $\sigma_{1}$ and $\sigma_2$ are the generators of the three-stranded braid group, see for example \cite[Fig. 1]{bi}. According to \cite{bi}, one can use the Burau representation to compute the Alexander polynomial. Recall that the Burau representation $\psi:B_3\rightarrow GL(2, \mathbb{Z}[t,t^{-1}])$ is defined as
$$\psi(\sigma^{-1}_{1})=
\begin{bmatrix}
-t & 1 \\
0 & 1
\end{bmatrix},$$
$$\psi(\sigma^{-1}_{2})=
\begin{bmatrix}
1 & 0 \\
t & -t
\end{bmatrix}.$$ Then
\begin{align*}
\psi(\sigma_{1}^{-2n+4}(\sigma_1^{-1}\sigma_{2}^{-1})^{3m+2})&= \begin{bmatrix}
-t & 1 \\
0 & 1
\end{bmatrix}^{2n-4}\begin{bmatrix}
-1 & 1 \\
-1 & 0
\end{bmatrix} t^{3m+2}\\
&=\begin{bmatrix}
-t^{2n-4}+t^{2n-5}-t^{2n-6}+\cdots+t-1 & t^{2n-4} \\
1 & 0
\end{bmatrix}t^{3m+2}.
\end{align*}
By \cite[Eq.(7)]{bi}, the Alexander polynomial $$\Delta(K_{n,m})=\frac{t^{n+3m}-t^{-n-3m}(-t^{2n-4}+t^{2n-5}-t^{2n-6}+\cdots+t-1)t^{3m+2}+t^{-n-3m}}{t^{-1}+1+t}.$$
We then obtain the desired form in the statement by direct computation.
\end{proof}

This polynomial is also obtained in \cite{t}. So the genus of $K_{n,m}$ is $n+3m-1$.

Using (\ref{t_i}), we compute the torsion coefficients of $K_{n,m}$ as follows:
\begin{itemize}

\item If $n$ is odd and  $i\leq n-2$, then $$t_{i}= \left\{
                \begin{array}{ll}

                  m+\frac{n-i}{2}, ~~~~~~~~~~~~~~& \text{if}~~ i~~ \text{is odd},\\
                  m+\frac{n-i-1}{2}, ~~~~~~~~~~~~~~& \text{if}~~ i~~ \text{is even}.

                \end{array}
              \right. $$

\item If $n$ is even and  $i\leq n-2$, then $$t_{i}= \left\{
                \begin{array}{ll}

                  m+\frac{n-i-1}{2}, ~~~~~~~~~~~~~~& \text{if}~~ i~~ \text{is odd},\\
                  m+\frac{n-i}{2}, ~~~~~~~~~~~~~~ & \text{if}~~ i~~ \text{is even}.

                \end{array}
              \right. $$

\item If $i\geq n-1$, then $t_{i}=m-k$ for $i=n+3k+\epsilon$, where $\epsilon\in\{-1,0,1\}$ and $0\leq k\leq m-1$.
\end{itemize}

According to \cite{Rsurgery}, both $(2n+6m-3)$-surgery and  $(2n+6m-2)$-surgery along $K_{n,m}$ are L-spaces.

We consider the $(2n+6m-2)$-surgery $Y:=S^3_{2n+6m-2}(K_{n,m})$.

\begin{lemma}\label{lemma:negd1}
If  $0\leq i\leq n-2$,
then
$d(Y,i)<0.$
\end{lemma}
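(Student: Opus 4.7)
The plan is to invoke Proposition~\ref{proposition: d1} to obtain a closed-form expression for $d(Y,i)$, substitute the explicit formula for the torsion coefficient $t_i(K_{n,m})$ that was just tabulated, and then reduce the inequality $d(Y,i)<0$ to an elementary polynomial inequality in $n$, $m$, and $i$ which can be verified by direct estimation on the range $0\leq i\leq n-2$.

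Concretely, setting $p=2n+6m-2$, Proposition~\ref{proposition: d1} gives
\[
d(Y,i)=\frac{(p-2i)^{2}}{4p}-\frac{1}{4}-2t_{i}(K_{n,m}).
\]
From the case distinction on the parities of $n$ and $i$ listed above, we have the uniform lower bound
\[
t_{i}(K_{n,m})\;\geq\; m+\frac{n-i-1}{2}\qquad\text{for }0\leq i\leq n-2.
\]
Substituting this bound, the desired inequality $d(Y,i)<0$ will follow once we establish
\[
\frac{(p-2i)^{2}}{4p}<\frac{1}{4}+2m+(n-i-1).
\]
Multiplying by $4p$ and expanding $(p-2i)^{2}=p^{2}-4pi+4i^{2}$, this is equivalent to
\[
4i^{2}<(2m+2n-1)(2n+6m-2),
\]
after $p$ is substituted and the linear-in-$i$ terms cancel.

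Hence it suffices to verify that $4(n-2)^{2}<(2m+2n-1)(2n+6m-2)$ for all $n\geq 2$, $m\geq 1$. Expanding the right-hand side and subtracting gives $16mn+12m^{2}+10n-10m-14$, which is manifestly positive for $n\geq 2$ and $m\geq 1$ (already at $m=1$ it equals $26n-12>0$, and raising $m$ only increases it). This completes the strategy.

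I do not expect a genuine obstacle here: the only minor subtlety is choosing the right uniform lower bound for $t_{i}$ that works across all four parity cases, and confirming that the $i$-dependent terms in the resulting inequality can be uniformly dominated on the interval $0\leq i\leq n-2$ by taking $i=n-2$. Everything else is routine algebraic manipulation directly paralleling the (commented-out) computation the authors carried out for the $(2n+6m-3)$-surgery in a prior draft.
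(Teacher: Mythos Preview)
Your proposal is correct and follows essentially the same approach as the paper: both use the uniform bound $t_i \geq m + \frac{n-i-1}{2}$ and reduce the inequality $d(Y,i)<0$ to an equivalent polynomial inequality in $i$ that is easily verified on $0\le i\le n-2$. The paper simplifies to the form $\frac{i^2}{n+3m-1} < n+m-\frac{1}{2}$, whereas you keep the equivalent form $4i^2 < (2n+2m-1)(2n+6m-2)$; these are the same inequality, and your final check at $i=n-2$ is just a more explicit version of the paper's concluding remark.
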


\begin{proof}
Note that
\begin{align*}
d(Y,i)&=\frac{(2n+6m-2-2i)^2}{4\times (2n+6m-2)}-\frac{1}{4}-2t_{i} \\
&\leq  \frac{(2n+6m-2-2i)^2}{4\times (2n+6m-2)}-\frac{1}{4}-(2m+n-i-1).
\end{align*}
It suffices to show the following inequalities.
\begin{align*}
& \frac{(2n+6m-2-2i)^2}{4\times (2n+6m-2)}<\frac{1}{4}+(2m+n-i-1).\\
\Leftrightarrow \;\;  & \frac{(n+3m-1-i)^2}{2n+6m-2}<2m+n-i-\frac{3}{4}.\\
\Leftrightarrow \;\;  & \frac{(n+3m-1-i)^2}{n+3m-1}<2n+4m-2i-\frac{3}{2}.\\
 \Leftrightarrow \;\;  &    n+3m-1-2i+\frac{i^2}{n+3m-1}<2n+4m-2i-\frac{3}{2}.\\
  \Leftrightarrow \;\;  &    \frac{i^2}{n+3m-1}<n+m-\frac{1}{2}.
\end{align*}
The last equality follows from $0\leq i\leq n-2$.
\end{proof}

\begin{lemma}\label{lemma:negd2}
If  $n-1\leq i \leq n+3m-1$, then $d(Y,i)<0$.
\end{lemma}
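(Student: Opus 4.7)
The plan is to parallel the calculation in Lemma~\ref{lemma:negd1}, applying Proposition~\ref{proposition: d1} to the L-space knot $K_{n,m}$ to obtain
\[
d(Y,i)=\frac{(p-2i)^2}{4p}-\frac14-2t_i(K_{n,m}), \qquad p:=2n+6m-2,
\]
and then splitting the argument into a ``bulk'' range and one boundary value.

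For the bulk range $n-1\le i\le n+3m-2$, I would use the torsion coefficient formula stated right before the two lemmas: write $i=n+3k+\epsilon$ with $0\le k\le m-1$ and $\epsilon\in\{-1,0,1\}$, so that $t_i=m-k$. A direct substitution gives $p-2i=2(3m-3k-1-\epsilon)$, so
\[
d(Y,i)=\frac{(3m-3k-1-\epsilon)^2}{2(n+3m-1)}-\frac14-2(m-k).
\]
Setting $j:=m-k\in\{1,\ldots,m\}$, the worst case among $\epsilon\in\{-1,0,1\}$ is $\epsilon=-1$ (which maximizes the squared numerator to $9j^2$), so it suffices to establish
\[
\frac{9j^2}{2(n+3m-1)}<\frac14+2j, \qquad 1\le j\le m.
\]
Since $n\ge 2$ we have $3j\le 3m<n+3m-1$, hence the left side is strictly less than $\tfrac{3j}{2}$, and $\tfrac{3j}{2}\le\tfrac14+2j$ is immediate. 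This disposes of the bulk range.

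For the remaining value $i=n+3m-1$, note this equals the Seifert genus $g(K_{n,m})$ and lies outside the range in which the explicit formula for $t_i$ was stated. I would handle it directly: from the expression of $\Delta(K_{n,m})$ in Lemma~\ref{alex1}, the top symmetrized coefficient is $a_g=1$ and no higher coefficient is nonzero, so $t_g=0$. Meanwhile $p-2g=0$, and therefore $d(Y,g)=-\tfrac14<0$.

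There is no real obstacle beyond bookkeeping: the genuine work is just verifying that the $\epsilon=-1$ choice is the worst and that the resulting one-variable inequality in $j$ holds across the allowed range. The only subtle point is to notice that $i=n+3m-1$ is not covered by the cited case analysis of the torsion coefficients and so must be handled by hand from the Alexander polynomial, where it becomes trivial.
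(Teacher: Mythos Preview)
Your proof is correct and follows essentially the same approach as the paper: both split off the boundary value $i=n+3m-1$ (where $p-2i=0$), and for the bulk range both parametrize $i=n+3k+\epsilon$, reduce to the worst case $\epsilon=-1$, and verify the resulting elementary inequality. Your version is arguably a bit cleaner in that you explicitly justify why $\epsilon=-1$ is the worst case and use the substitution $j=m-k$, whereas the paper passes to that case tacitly.
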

\begin{proof}
For $i=n+3m-1$, it is clear that $d(Y,i)=-\frac{1}{4}-2t_{i}<0$ as $t_i$ is non-negative.

Now, we assume $n-1\leq i\leq n+3m-2$. Then we can uniquely express $i=n+3k+\epsilon$, where $\epsilon\in\{-1,0,1\}$ and $0\leq k\leq m-1$; and

\begin{align*}
d(Y,i)&=\frac{(2n+6m-2-2i)^2}{4\times (2n+6m-2)}-\frac{1}{4}-2t_{i}\\
      &= \frac{(2n+6m-2-2(n+3k-1))^2}{4\times (2n+6m-2)}-\frac{1}{4}-2(m-k)
\end{align*}

It suffices to show the following inequalities.

\begin{align*}
  & \frac{(2n+6m-2-2(n+3k-1))^2}{4\times (2n+6m-2)}<\frac{1}{4}+2m-2k.\\
\Leftrightarrow \;\;  & \frac{(3m-3k)^2}{2n+6m-2}<\frac{1}{4}+2m-2k.\\
\Leftrightarrow \;\;  & \frac{9(m-k)^2}{n+3m-1}<\frac{1}{2}+4m-4k.\\
 \Leftarrow \;\;  &    \frac{9(m-k)^2}{n+3m-1}<4m-4k.\\
  \Leftrightarrow \;\;  &     \frac{9(m-k)}{n+3m-1}<4.\\
  \Leftrightarrow \;\;  &      4<3m+4n+9k,
\end{align*}
where we used $0\leq k\leq m-1$ in the second last line of equivalence.
\end{proof}

Similarly, the closed 3-braid presentation of $K'_{n,m}$ is $ \sigma_{1}^{-2n+4}(\sigma_1^{-1}\sigma_{2}^{-1})^{3m+1}$, and one can use the Burau representation to compute its Alexander polynomial.

\begin{lemma}\label{alex2}
The Alexander polynomial of $K'_{n,m}$ is

\begin{align*}
\Delta(K'_{n,m})=&(-1)^n+\sum_{i=1}^{n-2}(-1)^{n-i}(t^i+t^{-i}) -\sum_{k=0}^{m-1}(t^{n+3k}+t^{-n-3k})\\ &+\sum_{k=0}^{m-1}(t^{n +3k+1}+t^{-n-3k-1}).    
\end{align*}

\end{lemma}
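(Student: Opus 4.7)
The plan is to mirror, step by step, the proof of the preceding lemma, since the closed $3$-braid presentations of $K_{n,m}$ and $K'_{n,m}$ differ only by replacing the exponent $3m+2$ on $\sigma_1^{-1}\sigma_2^{-1}$ with $3m+1$. The key observation, which streamlines both calculations, is that $\psi((\sigma_1^{-1}\sigma_2^{-1})^3)=t^3 I$; this is verified by a single matrix product from the explicit formulas for $\psi(\sigma_1^{-1})$ and $\psi(\sigma_2^{-1})$. Consequently,
\[
\psi\bigl((\sigma_1^{-1}\sigma_2^{-1})^{3m+1}\bigr)=t^{3m}\,\psi(\sigma_1^{-1}\sigma_2^{-1})=t^{3m+1}\begin{pmatrix} 0 & -1\\ 1 & -1\end{pmatrix},
\]
which plays the role taken by $t^{3m+2}\begin{pmatrix} -1 & 1\\ -1 & 0\end{pmatrix}$ in the $K_{n,m}$ case.

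Next, I would multiply this matrix on the left by $\psi(\sigma_1^{-2n+4})=\begin{pmatrix} t^{2n-4} & \sum_{j=0}^{2n-5}(-t)^j\\ 0 & 1\end{pmatrix}$, computed exactly as in the previous lemma, to obtain a closed $2\times 2$ expression for the Burau image of the full braid word $\sigma_1^{-2n+4}(\sigma_1^{-1}\sigma_2^{-1})^{3m+1}$. Then I would feed this into Birman's formula (Eq.~(7) of \cite{bi}) already used in the previous lemma, producing a rational expression with denominator $t^{-1}+1+t$; the overall exponent of $t$ in the numerator shifts by one compared to the $K_{n,m}$ case, reflecting the fact that the two braids differ in writhe by $2$.

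The final step is the polynomial division by $t^{-1}+1+t$ and regrouping the quotient into the symmetric form stated in the lemma. This is the main technical step, though entirely routine: the alternating middle terms $(-1)^n+\sum_{i=1}^{n-2}(-1)^{n-i}(t^i+t^{-i})$ come from the partial geometric series $\sum_{j=0}^{2n-5}(-t)^j$ contributed by the $\sigma_1^{-2n+4}$ factor, while the paired terms $-t^{\pm(n+3k)}+t^{\pm(n+3k+1)}$ for $k=0,\ldots,m-1$ arise from unpacking the $(\sigma_1^{-1}\sigma_2^{-1})^{3m+1}$ factor. The only plausible obstacle is bookkeeping with signs and indices; as sanity checks one can verify the palindromic symmetry $\Delta(K'_{n,m})(t^{-1})=\Delta(K'_{n,m})(t)$ and the agreement of the leading exponent $n+3m-2$ with the genus of $K'_{n,m}$ recorded in the earlier remark.
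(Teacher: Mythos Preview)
Your proposal is correct and follows exactly the approach the paper intends: the paper does not write out a proof for this lemma at all, merely stating before the lemma that the braid presentation is $\sigma_1^{-2n+4}(\sigma_1^{-1}\sigma_2^{-1})^{3m+1}$ and that one can use the Burau representation as in the proof of Lemma~\ref{alex1}. Your observation that $\psi((\sigma_1^{-1}\sigma_2^{-1})^3)=t^3 I$ is a clean way to organize the computation (and is implicitly what makes the paper's formula $\psi((\sigma_1^{-1}\sigma_2^{-1})^{3m+2})=t^{3m+2}\left(\begin{smallmatrix}-1&1\\-1&0\end{smallmatrix}\right)$ work in the $K_{n,m}$ case), and the rest of your outline---multiplying by $\psi(\sigma_1^{-2n+4})$, applying Birman's trace formula with the exponent shift of one coming from the writhe change, and dividing by $t^{-1}+1+t$---is precisely the intended argument.
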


The genus of $K'_{n,m}$ is $n+3m-2$. The torsion coefficients of $K'_{n,m}$ is as follows:

\begin{itemize}
\item If $n$ is even and  $i\leq n-3$, then $$t_{i}= \left\{
                \begin{array}{ll}

                  m+\frac{n-i-2}{2}, ~~~~~~~~~~~~~~ \text{if}~~ i~~ \text{is even},\\
                  m+\frac{n-i-1}{2}, ~~~~~~~~~~~~~~ \text{if}~~ i~~ \text{is odd}.

                \end{array}
              \right. $$

\item If $n$ is odd and  $i\leq n-3$, then $$t_{i}= \left\{
                \begin{array}{ll}

                  m+\frac{n-i-1}{2}, ~~~~~~~~~~~~~~ \text{if}~~ i~~ \text{is even},\\
                  m+\frac{n-i-2}{2}, ~~~~~~~~~~~~~~ \text{if}~~ i~~ \text{is odd}.

                \end{array}
              \right. $$


\item If $i\geq n-2$, then $t_{i}=m-k$ for $i=n+3k-1+\epsilon$, where $\epsilon\in\{-1,0,1\}$ and $0\leq k\leq m-1$.

\end{itemize}

According to \cite{Rsurgery}, both $(2n+6m-5)$-surgery and $(2n+6m-4)$-surgery along $K'_{n,m}$ are L-spaces.

We consider the $(2n+6m-4)$-surgery  $Y':=S^3_{2n+6m-4}(K'_{n,m})$.

\begin{lemma}\label{lemma:negd3}
If  $0\leq i\leq n-3$, then
$d(Y',i)<0$.
\end{lemma}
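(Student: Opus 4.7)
The plan is to mimic the proof of Lemma~\ref{lemma:negd1}, with $K_{n,m}$ replaced by $K'_{n,m}$ and the surgery coefficient $2n+6m-2$ replaced by $2n+6m-4$. Since $K'_{n,m}$ is an L-space knot and the $(2n+6m-4)$-surgery yields an L-space, Proposition~\ref{proposition: d1} applies and gives
$$d(Y',i) = \frac{(2n+6m-4-2i)^2}{4(2n+6m-4)} - \frac{1}{4} - 2t_i(K'_{n,m}).$$

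From the torsion coefficient table for $K'_{n,m}$ recorded just before the present lemma, both parity branches produce $t_i(K'_{n,m}) \geq m + \frac{n-i-2}{2}$ throughout the range $0 \leq i \leq n-3$; equivalently, $2t_i \geq 2m+n-i-2$. Substituting this lower bound, the claim reduces to the purely numerical inequality
$$\frac{(2n+6m-4-2i)^2}{4(2n+6m-4)} < \frac{1}{4} + (2m+n-i-2).$$

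Writing $A = 2n+6m-4$, clearing denominators and cancelling the common $-4Ai$ terms on both sides reduces this to $4i^2 < A(2n+2m-3)$. The range $0 \leq i \leq n-3$ forces $n \geq 3$, and in the most constrained case $m=1$ the bound to check becomes $4(n-3)^2 < (2n+2)(2n-1) = 4n^2+2n-2$, which is immediate for $n \geq 2$; for larger $m$ the right-hand side only grows, so the estimate holds throughout the prescribed range. The entire argument is essentially the parity-shifted sibling of Lemma~\ref{lemma:negd1}; the only delicacy is extracting the correct parity-independent lower bound on $t_i$, since after that the dominant $A^2$ and $4Ai$ contributions cancel, leaving a single easy quadratic inequality in $i$.
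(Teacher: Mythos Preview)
Your proof is correct and follows essentially the same approach as the paper: both start from Proposition~\ref{proposition: d1}, use the parity-independent bound $2t_i \geq 2m+n-i-2$, and reduce to an equivalent quadratic inequality in $i$ (your $4i^2 < A(2n+2m-3)$ is exactly the paper's $\frac{i^2}{n+3m-2} < n+m-\frac{3}{2}$ after clearing denominators). The only cosmetic difference is that the paper halves early and works with $n+3m-2$ instead of $A=2n+6m-4$.
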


\begin{proof}
Note that
\begin{align*}
d(Y',i)&=\frac{(2n+6m-4-2i)^2}{4\times (2n+6m-4)}-\frac{1}{4}-2t_{i}\\
&\leq \frac{(2n+6m-4-2i)^2}{4\times (2n+6m-4)}-\frac{1}{4}-(2m+n-i-2).
\end{align*}
It suffices to show the following inequalities.
\begin{align*}
& \frac{(2n+6m-4-2i)^2}{4\times (2n+6m-4)}<\frac{1}{4}+(2m+n-i-2).\\
\Leftrightarrow \;\;  & \frac{(n+3m-2-i)^2}{2n+6m-4}<2m+n-i-\frac{7}{4}. \\
\Leftrightarrow \;\;  & \frac{(n+3m-2-i)^2}{n+3m-2}<2n+4m-2i-\frac{7}{2}. \\
\Leftrightarrow \;\;  & n+3m-2-2i+\frac{i^2}{n+3m-2}<2n+4m-2i-\frac{7}{2}. \\
\Leftrightarrow \;\;  & \frac{i^2}{n+3m-1}<n+m-\frac{3}{2}.
\end{align*}
The last inequality follows from $0\leq i\leq n-3$.
\end{proof}

\begin{lemma}\label{lemma:negd4}
If $n-2\leq i\leq n+3m-2$, then $d(Y',i)<0$.
\end{lemma}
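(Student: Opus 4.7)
The plan parallels the proof of Lemma~\ref{lemma:negd2}, splitting the range of $i$ into a boundary case and a generic regime, and controlling
\[
d(Y',i)=\frac{(2n+6m-4-2i)^2}{4(2n+6m-4)}-\frac{1}{4}-2t_i(K'_{n,m}).
\]

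First I would dispose of the boundary case $i=n+3m-2=g(K'_{n,m})$ separately. Since the symmetrized Alexander polynomial of $K'_{n,m}$ has degree $g$, every coefficient $a_j$ with $|j|>g$ vanishes, so formula~(\ref{t_i}) gives $t_{n+3m-2}=0$; and the surgery coefficient $2n+6m-4$ equals $2g$, so the squared numerator also vanishes. Hence $d(Y',n+3m-2)=-\frac{1}{4}<0$ immediately.

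For the generic range $n-2\leq i\leq n+3m-3$, I would parametrize $i=n+3k-1+\epsilon$ with $0\leq k\leq m-1$ and $\epsilon\in\{-1,0,1\}$, in which case $t_i(K'_{n,m})=m-k$. Direct substitution gives
\[
2n+6m-4-2i=2\bigl(3(m-k)-1-\epsilon\bigr),
\]
a non-negative quantity (since $m-k\geq 1$) that is maximized at $\epsilon=-1$. Writing $u=m-k\geq 1$, it then suffices to establish the worst-case inequality
\[
\frac{9u^2}{2(n+3m-2)}-\frac{1}{4}-2u<0 \qquad \text{for } 1\leq u\leq m.
\]
Since $u\leq m$, we have $9u^2\leq 9mu$, so a sufficient condition after clearing denominators is $9m\leq 4(n+3m-2)$, equivalently $4n+3m\geq 8$, which holds under the standing hypothesis $n\geq 2$ and $m\geq 1$. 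The residual positive $\frac{1}{4}$ guarantees that the inequality is strict.

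The only subtlety is noticing that the top Spin$^c$ class $i=n+3m-2$ falls outside the $(k,\epsilon)$ parametrization and has $t_i=0$, so it must be handled by hand; once this case is isolated, the remaining algebra is routine and mirrors the proof of Lemma~\ref{lemma:negd2}.
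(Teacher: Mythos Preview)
Your proof is correct and follows essentially the same approach as the paper: the same case split at $i=n+3m-2$, the same parametrization $i=n+3k-1+\epsilon$ with the worst case at $\epsilon=-1$, and the same final reduction to $4n+3m\geq 8$. The only cosmetic difference is that you substitute $u=m-k$ and use $9u^2\leq 9mu$ where the paper divides through by $m-k$ directly, but the algebra is equivalent.
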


\begin{proof}

For $i=n+3m-2$, it is clear that $d(Y',i)=-\frac{1}{4}-2t_{i}<0$ as $t_i$ is non-negative.

Now, we assume $n-2\leq i\leq n+3m-3$; then we can uniquely express $i=n+3k-1+\epsilon$, where $\epsilon\in\{-1,0,1\}$ and $0\leq k\leq m-1$; and

\begin{align*}
d(Y',i)&=\frac{(2n+6m-4-2i)^2}{4\times (2n+6m-4)}-\frac{1}{4}-2t_{i} \\
       & =\frac{(2n+6m-4-2i)^2}{4\times (2n+6m-4)}-\frac{1}{4}-2(m-k).
\end{align*}

It suffices to show the following inequalities.
\begin{align*}
& \frac{(2n+6m-4-2(n+3k-2))^2}{4\times (2n+6m-4)}<\frac{1}{4}+2m-2k. \\
\Leftrightarrow \;\;  &  \frac{(3m-3k)^2}{2n+6m-4}<\frac{1}{4}+2m-2k. \\
\Leftrightarrow \;\;  &  \frac{9(m-k)^2}{n+3m-2}<\frac{1}{2}+4m-4k. \\
\Leftarrow \;\;  &  \frac{9(m-k)^2}{n+3m-2}<4m-4k. \\
\Leftrightarrow \;\;  &  \frac{9(m-k)}{n+3m-2}<4.  \\
\Leftrightarrow \;\;  &  9(m-k)<4n+12m-8. \\
\Leftrightarrow \;\;  &  8<4n+3m+9k.
\end{align*}
The last inequality follows from $n\geq2$ and $m\geq 1$.
\end{proof}

\begin{proof}[Proof of Theorem~\ref{Theorem:Main1}]
First, we prove  the case for $K_{n,m}$. By Lemma~\ref{lemma:negd1}, Lemma~\ref{lemma:negd2} and Proposition~\ref{proposition: d0}, the 3-manifold $S^{3}_{2n+6m-2}(K_{n,m})$ does not bound a negative-definite 4-manifold. Furthermore, Lemma~\ref{lemma:negdefcob} implies that $S^{3}_{r}(K_{n,m})$ does not bound a negative-definite 4-manifold either for any $r\in[2n+6m-3, 2n+6m-2]$. Since $S^{3}_{r}(K_{n,m})$ is an L-space for $r\geq 2g(K_{n,m})-1=2n+6m-3$, the theorem follows from  Theorem~\ref{thm: negdefinite}.

The proof for the case of $K'_{n,m}$ is the same if we apply Lemma~\ref{lemma:negd3} and Lemma~\ref{lemma:negd4} instead.
\end{proof}


\begin{proof}[Proof of Theorem~\ref{Theorem:Main2}]
Note that the $(2g-1)$-surgery along the L-space knot $K$ is an L-space, which we denote by $Y:=S^3_{2g-1}(K)$.  By Theorem~\ref{thm: negdefinite}, Proposition~\ref{proposition: d0} and Proposition~\ref{proposition: d1}, it suffices to show that $d(Y,i)<0$ for $i=0,1,\cdots,g-1$, where $$d(Y,i)=\frac{(2g-1-2i)^2}{4(2g-1)}-\frac{1}{4}-2t_{i}(K).$$


Recall that $i_{k}=\min\{i\in\mathbb{Z}_{\geq 0} \mid 2i>2g-1-\sqrt{(8k+1)(2g-1)}\}$ for integer $k\in [0, [\frac{g-1}{4}]+1]$. We obtain a sequence of integers
$$0=i_{[\frac{g-1}{4}]+1}\leq i_{[\frac{g-1}{4}]}\leq \cdots \leq i_1 \leq i_0 \leq g-1.$$
Note that if $i\geq i_{k}$, then $2i>2g-1-\sqrt{(8k+1)(2g-1)}$, which implies $\frac{(2g-1-2i)^2}{4(2g-1)}-\frac{1}{4}<2k$.  It follows that $$d(Y,i)<2k-2t_i(K).$$

From this observation, we prove $d(Y,i)<0$ as follows:

Suppose $i_0 \leq i \leq g-1$.  It readily follows that $d(Y,i)<-2t_i(K)\leq 0$.

Suppose $i_k\leq i \leq i_{k-1}$ for some $k\in \{1, 2, \cdots, [\frac{g-1}{4}]+1\}$.   By the monotonicity of $t_i$, we have $t_i(K)\geq t_{i_{k-1}}(K)\geq k$, where we use the assumption on the torsion coefficient for the second inequality.  Hence $d(Y,i)<2k-2t_i(K)\leq 0$.
\end{proof}

\subsection{Dehn surgeries along two-component L-space links} \label{section: links}

In this section, we consider Dehn surgeries along the two-component L-space links $\mathbb{L}_{n}$ and $K(5,5)$.  Our strategy for proving Theorem ~\ref{Theorem:Main3} and Proposition~\ref{proposition:K(5,5)} is similar to that for the case of surgeries on L-space knots in the previous subsection. On one hand, we prove the surgered manifolds are L-spaces; on the other hand, we show that in some cases their $d$-invariants are negative or smaller than $\frac{1}{6}$.  The non-existence of a symplectic fillable contact structure then follows from Theorem~\ref{thm: negdefinite} and Proposition~\ref{proposition: d0}.


\begin{lemma}\label{lemma:-defcob}
Let $K_{1}\cup K_{2}\subset S^3$ be a two-component link with linking number $0$, and let $r_{i}, s_{i}$ be rational numbers with $r_{i} \geq s_{i}>0$, for $i=1,2$.
Then there exists a negative-definite two-handle cobordism from $S_{s_1,s_2}^{3}(K_{1}\cup K_{2})$ to $S_{r_1,r_2}^{3}(K_{1}\cup K_{2})$.
\end{lemma}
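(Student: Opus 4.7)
The idea is to reduce to the knot case (Lemma~\ref{lemma:negdefcob}) applied in an auxiliary 3-manifold, by changing the two surgery coefficients one at a time.

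First, I will construct the cobordism in two stages: a negative-definite two-handle cobordism $W_1$ from $S^3_{s_1,s_2}(K_1\cup K_2)$ to $S^3_{r_1,s_2}(K_1\cup K_2)$, and a second such cobordism $W_2$ from $S^3_{r_1,s_2}(K_1\cup K_2)$ to $S^3_{r_1,r_2}(K_1\cup K_2)$. (If $r_i = s_i$ one stage is trivial.) Because $\mathrm{lk}(K_1,K_2)=0$, each of the three boundary 3-manifolds has surgery linking matrix with nonzero determinant and hence is a rational homology sphere; by Mayer--Vietoris with $\mathbb{Q}$-coefficients, $H_2(W_1\cup W_2;\mathbb{Q}) = H_2(W_1;\mathbb{Q})\oplus H_2(W_2;\mathbb{Q})$, and the intersection form of the composite is the orthogonal direct sum of those of $W_1$ and $W_2$. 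So it suffices to construct each $W_i$ separately.

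Next, I will reformulate each stage as a knot-surgery problem. For $W_1$, let $Y_2 := S^3_{s_2}(K_2)$ and view $K_1$ as a knot in $Y_2$. Since $\mathrm{lk}(K_1,K_2)=0$, a Seifert surface for $K_1\subset S^3$ may be isotoped to be disjoint from $K_2$ and therefore survives into $Y_2$; hence $K_1$ is nullhomologous in $Y_2$ and its Seifert framing in $S^3$ coincides with the natural framing in $Y_2$. Thus $S^3_{t,s_2}(K_1\cup K_2) = (Y_2)_t(K_1)$ for every rational $t$, and the construction of $W_1$ reduces to producing a negative-definite two-handle cobordism from $(Y_2)_{s_1}(K_1)$ to $(Y_2)_{r_1}(K_1)$. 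Construction of $W_2$ is analogous after exchanging the roles of $K_1$ and $K_2$, using that $K_2$ remains nullhomologous in $S^3_{r_1}(K_1)$ by the same argument.

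Finally, I would invoke the direct generalization of Lemma~\ref{lemma:negdefcob} to an arbitrary closed oriented ambient 3-manifold: for a nullhomologous knot $K$ in $Y$ and rationals $r > s > 0$, there is a negative-definite two-handle cobordism from $Y_s(K)$ to $Y_r(K)$. The construction is entirely local in a tubular neighborhood of $K$, using a chain of two-handles along meridians with integer framings from a suitable continued-fraction description of the change of coefficient, and so it transfers verbatim from the proof of Lemma~\ref{lemma:negdefcob}. The only step requiring care is ruling out contributions from the $H_2$ of the lower boundary to the intersection form; but since in each stage that boundary is a rational homology sphere, the rational $H_2$ of the cobordism embeds into the lattice spanned by the attaching circles, and negative-definiteness of the relevant linking matrix upgrades automatically to negative-definiteness of the full intersection form.
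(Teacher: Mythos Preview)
Your proposal is correct and is essentially a fleshed-out version of the paper's one-line proof, which simply says that because the linking number vanishes the argument of Owens--Strle for Lemma~\ref{lemma:negdefcob} carries over verbatim. Your decomposition into two one-coefficient stages, together with the observation that linking number zero makes each component nullhomologous in the partial surgery (so the Owens--Strle continued-fraction cobordism applies locally and the two blocks of the intersection form are orthogonal), is precisely the mechanism the paper has in mind but does not spell out.
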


\begin{proof}
Since the linking number of $K_{1}\cup K_{2}$ is $0$, the proof is essentially the same as the proof of Lemma~\ref{lemma:negdefcob} in \cite{OwSt1}.
\end{proof}

\begin{lemma}\label{lemma:lspace}
The $(p_1, p_2)$-surgery along $\mathbb{L}_{n}$ is an L-space if $p_1\geq 1$ and $p_2\geq 2n+1$.
\end{lemma}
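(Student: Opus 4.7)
My plan is to induct on $p_1 \geq 1$, using the Ozsv\'ath--Szab\'o surgery exact triangle for integer surgeries on the unknotted component $L_1$.

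First, inspection of Figure~\ref{figure:Ln} shows that the linking number $\mathrm{lk}(L_1, L_2) = 0$: once $L_2 = T(2, 2n+1)$ is coherently oriented, the two strands of $L_2$ passing through a disk bounded by $L_1$ run in opposite directions, so their signed intersections with that disk cancel. Consequently, for any positive integers $p_1, p_2$,
\[
|H_1(S^3_{p_1,\,p_2}(\mathbb{L}_n);\,\mathbb{Z})| = p_1 p_2.
\]

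\emph{Base case $p_1 = 1$.} Since $L_1$ is an unknot, $(+1)$-surgery on $L_1$ gives back $S^3$, and a Rolfsen twist converts $L_2$ into a new knot $K' \subset S^3$ obtained by adding a full twist to the two strands of $L_2$ threading $L_1$. A direct analysis of Figure~\ref{figure:Ln} identifies $K'$ as the torus knot $T(2, 2n+3)$, an L-space knot of genus $n+1$. Because $p_2 \geq 2n+1 = 2g(K') - 1$, positive $p_2$-surgery on $K'$ yields an L-space, so $S^3_{1,\,p_2}(\mathbb{L}_n) \cong S^3_{p_2}(K')$ is an L-space.

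\emph{Inductive step.} Assume $S^3_{p_1-1,\,p_2}(\mathbb{L}_n)$ is an L-space. View $L_1$ as a knot in $Y := S^3_{p_2}(T(2, 2n+1))$, which is itself an L-space since $T(2, 2n+1)$ is an L-space knot of genus $n$ and $p_2 \geq 2n+1 > 2n-1$. The surgery exact triangle for integer surgeries on $L_1 \subset Y$ reads
\[
\cdots \to \HF(Y_{p_1-1}(L_1)) \to \HF(Y_{p_1}(L_1)) \to \HF(Y) \to \cdots,
\]
where $Y_{p_i}(L_1) = S^3_{p_i,\,p_2}(\mathbb{L}_n)$. The first and third terms are L-spaces (by induction and by the observation above), so the rank inequality coming from the triangle yields
\[
\mathrm{rk}\,\HF(S^3_{p_1,\,p_2}(\mathbb{L}_n)) \leq (p_1-1)p_2 + p_2 = p_1 p_2 = |H_1(S^3_{p_1,\,p_2}(\mathbb{L}_n))|.
\]
The general lower bound $\mathrm{rk}\,\HF(Z) \geq |H_1(Z)|$ for any rational homology sphere $Z$ forces equality, so $S^3_{p_1,\,p_2}(\mathbb{L}_n)$ is an L-space, closing the induction.

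The main technical obstacle is the identification of $K'$ in the base case; this is a Kirby-calculus / Rolfsen-twist analysis on the specific diagram in Figure~\ref{figure:Ln}, requiring care with orientations and the handedness of the added full twist to confirm that the resulting knot is indeed $T(2, 2n+3)$ (so that the genus matches the threshold $p_2 \geq 2n+1$ exactly). Once the base case is pinned down, the induction is a purely formal application of the surgery exact triangle.
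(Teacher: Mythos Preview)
Your proof is correct and follows essentially the same strategy as the paper: identify the base case via a blow-down of the $+1$-framed unknot $L_1$ (yielding $T(2,2n+3)$), and then induct using the surgery exact triangle together with the fact that $S^3_{p_2}(T(2,2n+1))$ is an L-space. The only organizational difference is that the paper packages the exact-triangle step as Liu's ``L-space surgery induction lemma'' and applies it in two stages (first varying $p_1$ with $p_2=2n+1$ fixed, then varying $p_2$), whereas you handle all $p_2\geq 2n+1$ in the base case and run a single induction on $p_1$; the underlying argument is the same.
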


\begin{proof}
By blowing down, the $(1, 2n+1)$-surgery along  $\mathbb{L}_{n}$ is equivalent to an $(2n+1)$-surgery along the torus knot $T(2, 2n+3)$ which is an L-space. The $(2n+1)$-surgery along the torus knot $T(2,2n+1)$ is an L-space. So by the L-space surgery induction lemma \cite[Lemma 2.5]{liu}, $S^{3}_{p_1, 2n+1}(\mathbb{L}_{n})$ is an L-space for all positive integer $p_1$.  Now as the $p_1$-surgery along the unknot is an L-space, we can apply the L-space surgery induction lemma again and conclude that $S^{3}_{p_1, p_2}(\mathbb{L}_{n})$ is an L-space for all integers $p_1\geq 1$ and $p_2\geq 2n+1$.
\end{proof}

\begin{lemma}\label{lemma:alex3}
The two-variable Alexander polynomial of $\mathbb{L}_{n}$
$$\Delta(\mathbb{L}_{n})=-(t_{1}^{\frac{1}{2}}-t_{1}^{-\frac{1}{2}})(t_{2}^{\frac{2n+1}{2}}-t_{2}^{\frac{2n-1}{2}}+ \cdots +t_{2}^{-\frac{2n-1}{2}}-t_{2}^{-\frac{2n+1}{2}}).$$
\end{lemma}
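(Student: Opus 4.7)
The plan is to compute the multivariable Alexander polynomial of $\mathbb{L}_n$ directly from a Wirtinger presentation read off from Figure~\ref{figure:Ln}. In this diagram the unknot $L_1$ encircles two antiparallel strands of $L_2 = T(2, 2n+1)$, so that $\mathrm{lk}(L_1, L_2) = 0$; the twist box contributes $2n$ self-crossings of $L_2$, while a few additional crossings complete the $T(2, 2n+1)$ closure and record the four crossings where $L_1$ meets $L_2$. After labeling each arc by a meridian, abelianizing to send the meridians of $L_i$ to $t_i$, and taking Fox derivatives of each Wirtinger relator, one assembles the Alexander matrix of $\mathbb{L}_n$.

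I would then delete one column of this matrix and compute the determinant of the remaining square submatrix, which equals $\Delta(\mathbb{L}_n)$ up to a unit and the usual symmetrizing factor. The rows corresponding to the self-crossings of $L_2$ form a tridiagonal block over $\mathbb{Z}[t_2^{\pm 1}]$ whose determinant evaluates, by a standard three-term recursion of Chebyshev type, to the second factor $t_2^{(2n+1)/2} - t_2^{(2n-1)/2} + \cdots + t_2^{-(2n-1)/2} - t_2^{-(2n+1)/2}$. The rows coming from the crossings where $L_1$ meets $L_2$ contribute the factor $(t_1^{1/2} - t_1^{-1/2})$ together with the overall sign.

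The main obstacle is the bookkeeping of signs across the $2n+1$ crossings of $L_2$ and a careful normalization of the symmetrized multivariable Alexander polynomial; the recursion itself is elementary once set up correctly. As consistency checks, the Torres formula requires both $\Delta(\mathbb{L}_n)(1, t_2) = 0$ and $\Delta(\mathbb{L}_n)(t_1, 1) = 0$, since the linking number vanishes and $L_1$ is an unknot, and both identities are manifest in the claimed expression: the factor $(t_1^{1/2} - t_1^{-1/2})$ vanishes at $t_1 = 1$, while the alternating sum $\sum_{k=0}^{2n+1}(-1)^k t_2^{(2n+1)/2 - k}$ vanishes at $t_2 = 1$. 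I would also verify the small cases by hand, for instance comparing to the tabulated Alexander polynomial of $L7a3 = \mathbb{L}_1$. As an alternative route, if one prefers to avoid Fox calculus entirely, one may apply the Conway skein relation at a crossing inside the twist box to relate $\Delta(\mathbb{L}_n)$ to $\Delta(\mathbb{L}_{n-1})$ and the Alexander polynomial of the three-component oriented smoothing $L_1 \cup T(2, 2n)$, and then induct on $n$.
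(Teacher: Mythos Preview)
Your proposal outlines a valid route, but it is genuinely different from the paper's argument. The paper works with the Conway potential function $\nabla(\mathbb{L}_n)(t_1,t_2)=\Delta(\mathbb{L}_n)(t_1^2,t_2^2)$ and proves the formula by induction on $n$, using as base cases the Whitehead link $\mathbb{L}_0$ and $\mathbb{L}_1=L7a3$, and as inductive step the second-order full-twist recursion of Jiang,
\[
\nabla(\mathbb{L}_n)=(t_2^2+t_2^{-2})\,\nabla(\mathbb{L}_{n-1})-\nabla(\mathbb{L}_{n-2}).
\]
This avoids any Fox calculus or explicit Alexander matrix; the whole proof reduces to checking that the claimed expression satisfies the recursion and the two initial conditions.

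Your primary approach, by contrast, builds the Alexander matrix from a Wirtinger presentation and extracts the two factors from a block structure: a tridiagonal block from the $2n$ self-crossings of $L_2$ (giving the alternating $t_2$-sum via a Chebyshev-type determinant) and a residual block from the four $L_1$--$L_2$ crossings (giving $t_1^{1/2}-t_1^{-1/2}$). This is workable, but as written it is a sketch rather than a proof: the Alexander matrix does not a priori split into independent blocks, so you must actually perform the row/column reductions that isolate the tridiagonal submatrix, and you must pin down the unit normalization of the multivariable polynomial. Your alternative skein route---resolving one crossing in the twist box to relate $\mathbb{L}_n$, $\mathbb{L}_{n-1}$, and the three-component smoothing $L_1\cup T(2,2n)$---is closer in spirit to the paper's method, but it is a first-order recursion with an auxiliary link rather than the clean second-order recursion in $n$ alone that the paper invokes. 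Either of your approaches would succeed with the details filled in; the paper's is shorter because Jiang's relation has already packaged the induction step.
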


\begin{proof}
We denote the Conway potential function of $\mathbb{L}_{n}$ by $\nabla(\mathbb{L}_{n})$. See \cite{Ha}. It is related to the two-variable Alexander polynomial in the following way, $$\nabla(\mathbb{L}_{n})(t_1,t_2)=\Delta(\mathbb{L}_{n})(t^2_1,t^2_2).$$

We observe that $\mathbb{L}_{0}$ is the Whitehead link and $\mathbb{L}_{1}$ is $L7a3$, and so $$\nabla(\mathbb{L}_{0})(t_1,t_2)=-(t_{1}-t_{1}^{-1})(t_{2}-t_{2}^{-1}),$$  $$\nabla(\mathbb{L}_{1})(t_1,t_2)=-(t_{1}-t_{1}^{-1})(t^{3}_{2}-t_{2}+t_{2}^{-1}-t_{2}^{-3}).$$ Assume now that $$\nabla(\mathbb{L}_{k})(t_1,t_2)=-(t_{1}-t_{1}^{-1})(t_{2}^{2k+1}-t_{2}^{2k-1}+ \cdots +t_{2}^{-2k+1}-t_{2}^{-2k-1}),$$ for $k\leq n$. By the main result in \cite{j},
\begin{align*}
\nabla(\mathbb{L}_{n})(t_1,t_2)&=(t_{2}^{2}+t_{2}^{-2})\nabla(\mathbb{L}_{n-1})(t_1,t_2)-\nabla(\mathbb{L}_{n-2})(t_1,t_2)\\
&=-(t_{1}-t_{1}^{-1})(t_{2}^{2n+1}-t_{2}^{2n-1}+ \cdots +t_{2}^{-2n+1}-t_{2}^{-2n-1}),
\end{align*}
which completes the induction. The last step is to obtain the desired from of the two-variable Alexander polynomial of $\mathbb{L}_{n}$ from the straightforward change of variables.
\end{proof}

Thus, we have
$$\widetilde{\Delta}(\mathbb{L}_{n})=t_{1}^{\frac{1}{2}}t_{2}^{\frac{1}{2}}\Delta(\mathbb{L}_{n})=-(t_{1}-1)(t_{2}^{n+1}-t_{2}^{n}+ \cdots +t_{2}^{-n+1}-t_{2}^{-n}).$$

The component $L_1$ is the unknot, so the Alexander polynomial of $L_1$ is
$\Delta(L_1)=1$, and $$\widetilde{\Delta}(L_1)=\frac{1}{1-t^{-1}}\Delta(L_1)=1+t^{-1}+t^{-2}+\cdots.$$

The component $L_2$ is the torus knot $T(2,2n+1)$. The Alexander polynomial of $L_2$ is
$$\Delta(L_2)=t^{n}-t^{n-1}+\cdots+t^{-n+1}-t^{-n},$$ and
$$\widetilde{\Delta}(L_2)=\frac{1}{1-t^{-1}}\Delta(L_2)=t^{n}+t^{n-2}+\cdots+t^{-n+2}+t^{-n}+t^{-n-1}+t^{-n-2}+\cdots.$$

Using (\ref{DefHfunction}), we compute the $h$-function of $\mathbb{L}_n$ as follows.

\begin{itemize}
    \item If $n$ is even, and $s_1=0$, then $$h(0,k)= \left\{
                \begin{array}{ll}

                  \max\{\frac{n-|k|+1}{2},0\}, ~~~~~~~~~~~~~~ \text{if}~~ |k|~~ \text{is odd},\\
                  \max\{\frac{n-|k|+2}{2},0\}, ~~~~~~~~~~~~~~ \text{if}~~ |k|~~ \text{is even}.

                \end{array}
              \right. $$

   \item If $n$ is even, and $s_1\neq 0$, then            $$h(s_1,k)= \left\{
                \begin{array}{ll}

                  \max\{\frac{n-|k|+1}{2},0\}, ~~~~~~~~~~~~~~ \text{if}~~ |k|~~ \text{is odd},\\
                  \max\{\frac{n-|k|}{2},0\}, ~~~~~~~~~~~~~~ \text{if}~~ |k|~~ \text{is even}.

                \end{array}
              \right. $$

   \item If $n$ is odd, and $s_1=0$, then $$h(0,k)= \left\{
                \begin{array}{ll}

                  \max\{\frac{n-|k|+2}{2},0\}, ~~~~~~~~~~~~~~ \text{if}~~ |k|~~ \text{is odd},\\
                  \max\{\frac{n-|k|+1}{2},0\}, ~~~~~~~~~~~~~~ \text{if}~~ |k|~~ \text{is even}.

                \end{array}
              \right. $$

   \item If $n$ is odd, and $s_1\neq 0$, then            $$h(s_1,k)= \left\{
                \begin{array}{ll}

                  \max\{\frac{n-|k|}{2},0\}, ~~~~~~~~~~~~~~ \text{if}~~ |k|~~ \text{is odd},\\
                  \max\{\frac{n-|k|+1}{2},0\}, ~~~~~~~~~~~~~~ \text{if}~~ |k|~~ \text{is even}.

                \end{array}
              \right. $$
    \end{itemize}

We employ Theorem~\ref{thm: d2} to compute the d-invariants of $d(S^{3}_{p_1, p_2}(\mathbb{L}_{n}), (i_1, i_2))$, where $0\leq i_1\leq p_{1}-1$ and $0\leq i_2\leq p_{2}-1$, and $p_1$ and $p_2$ are positive integers.

Suppose $(p_1,p_2)=(4,2n+1)$, or $(p_1,p_2)=(5,2n+1)$ and $5<2n+1$. We first observe the following symmetry: For $n+1\leq k'\leq 2n$, we have
$$\frac{(2n+1-2k')^2}{4(2n+1)}-\frac{1}{4}=\frac{(2n+1-2k)^2}{4(2n+1)}-\frac{1}{4},$$ where $1\leq k:=2n+1-k'\leq n$. 
Also note that  $(i_1,k')$ and $(i_1,-k)$ belong to the same Spin$^c$ structure of $S^{3}_{p_{1}, 2n+1}(\mathbb{L}_{n})$, and $h(i_1,-k)=h(i_1,k)\geq h(i_1, k')$.
Thus, in order to prove that $d(S^{3}_{p_{1}, 2n+1}(\mathbb{L}_{n}), (i_1, i_2))$ is negative for $0\leq i_1\leq p_{1}-1$ and $0\leq i_2\leq 2n$, it suffices to show that
$$\frac{(p_{1}-2i_1)^2}{4p_1}-\frac{1}{4}+\frac{(2n+1-2k)^2}{4(2n+1)}-\frac{1}{4}<2h(i_{1},k)$$  for $0\leq k\leq n$.

\begin{lemma}\label{lemma:subcase1}
(1.1) If $(i_1, i_2)=(0,k)$, where $0\leq k\leq n$, $n$ is even and $k$ is odd. Then $h(0,k)=\frac{n-k+1}{2}$, and
$$\frac{p_{1}-1}{4}+\frac{(2n+1-2k)^2}{4(2n+1)}-\frac{1}{4}<n-k+1.$$

(1.2) If $(i_1, i_2)=(i_1,k)$, where $i_1\neq0$, $0\leq k\leq n$, $n$ is even and $k$ is odd. Then $h(i_1,k)=\frac{n-k+1}{2}$, and
$$\frac{(p_{1}-2i_1)^2}{4p_1}-\frac{1}{4}+\frac{(2n+1-2k)^2}{4(2n+1)}-\frac{1}{4}<n-k+1.$$

(2.1) If $(i_1, i_2)=(0,k)$, where $0\leq k\leq n$, $n$ is even and $k$ is even. Then $h(0,k)=\frac{n-k+2}{2}$, and
$$\frac{p_{1}-1}{4}+\frac{(2n+1-2k)^2}{4(2n+1)}-\frac{1}{4}<n-k+2.$$

(2.2) If $(i_1, i_2)=(i_1,k)$, where $0<i_1\leq\frac{p_1}{2}$, $0\leq k\leq n$, $n$ is even and $k$ is even. Then $h(i_1,k)=\frac{n-k}{2}$, and
$$\frac{(p_{1}-2i_1)^2}{4p_1}-\frac{1}{4}+\frac{(2n+1-2k)^2}{4(2n+1)}-\frac{1}{4}<n-k.$$

(3.1) If $(i_1, i_2)=(0,k)$, where $0\leq k\leq n$, $n$ is odd and $k$ is even. Then $h(0,k)=\frac{n-k+1}{2}$, and
$$\frac{p_{1}-1}{4}+\frac{(2n+1-2k)^2}{4(2n+1)}-\frac{1}{4}<n-k+1.$$

(3.2) If $(i_1, i_2)=(i_1,k)$, where $i_1\neq0$, $0\leq k\leq n$, $n$ is odd and $k$ is even. Then $h(i_1,k)=\frac{n-k+1}{2}$, and
$$\frac{(p_{1}-2i_1)^2}{4p_1}-\frac{1}{4}+\frac{(2n+1-2k)^2}{4(2n+1)}-\frac{1}{4}<n-k+1.$$

(4.1) If $(i_1, i_2)=(0,k)$, where $0\leq k\leq n$, $n$ is odd and $k$ is odd. Then $h(0,k)=\frac{n-k+2}{2}$, and
$$\frac{p_{1}-1}{4}+\frac{(2n+1-2k)^2}{4(2n+1)}-\frac{1}{4}<n-k+2.$$

(4.2) If $(i_1, i_2)=(i_1,k)$, where $i_1\neq0$, $0\leq k\leq n$, $n$ is odd and $k$ is odd. Then $h(i_1,k)=\frac{n-k}{2}$, and
$$\frac{(p_{1}-2i_1)^2}{4p_1}-\frac{1}{4}+\frac{(2n+1-2k)^2}{4(2n+1)}-\frac{1}{4}<n-k.$$
\end{lemma}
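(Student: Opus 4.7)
The plan is to verify each of the eight sub-inequalities directly using the formulas for the $h$-function of $\mathbb{L}_n$ established just before the lemma. Each sub-case has two parts: the evaluation of $h(i_1,k)$, which is immediate from the piecewise definition (since $0\le k\le n$ gives $|k|=k$ and the quantity inside $\max\{\cdot,0\}$ is non-negative for the prescribed parities), and a quadratic inequality relating the $d$-invariants of $p_1$-surgery and $(2n+1)$-surgery on the unknot to this $h$-value.

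For the inequality portion, the plan is to clear denominators and observe that, viewed as a function of $k$, the deficit between the two sides has a simple monotonic behavior. Concretely, setting $R(k)=n-k+C$ for the appropriate constant $C\in\{0,1,2\}$, one has
$$\frac{(2n+1-2k)^2}{4(2n+1)}-R(k)=\frac{1}{4}-\frac{n}{2}-C+\frac{k^2}{2n+1},$$
which is strictly increasing in $k$. Hence it suffices to verify each inequality at the largest allowed $k$ in $[0,n]$ with the prescribed parity, at which point the inequality becomes a short numerical check involving only $p_1\in\{2,3,4,5\}$, the permitted values of $i_1$, and $n$.

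The main obstacle is sub-case $(2.2)$ and the parallel sub-case $(4.2)$, where $R(k)=n-k$ can vanish at the extremal $k$. There the required inequality reduces to $\frac{(p_1-2i_1)^2}{4p_1}<\frac{1}{2}-\frac{1}{8n+4}$, which is tight precisely when $p_1=5$. This is exactly what forces the two structural hypotheses of the lemma: first, the restriction $0<i_1\le p_1/2$ in sub-case $(2.2)$, which together with $i_1\ne 0$ bounds $(p_1-2i_1)^2\le 9$ when $p_1=5$; and second, the standing hypothesis $5<2n+1$ for the $(p_1,p_2)=(5,2n+1)$ case, which guarantees $\frac{1}{8n+4}<\frac{1}{20}$ and thus $\frac{9}{20}<\frac{1}{2}-\frac{1}{8n+4}$.

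The remaining six sub-cases have strictly positive $R(k)$ at the extremal $k$ and admit considerable slack, so the resulting inequalities hold by a direct substitution argument using $(p_1-2i_1)^2\le p_1^2$. Overall I expect the proof to consist of the $h$-value identifications together with eight elementary one-line algebraic checks, with the bulk of the analytic content concentrated in the extremal $k=n$ computation in sub-case $(2.2)$.
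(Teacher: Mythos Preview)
Your proposal is correct and follows essentially the same approach as the paper: the paper proves cases (1.1) and (2.2) explicitly by expanding $(2n+1-2k)^2/(2n+1)$ and reducing to the bound $4k^2/(2n+1)<2n$ (respectively $<2n-\tfrac{4}{5}$ when $p_1=5$, where the hypothesis $5<2n+1$ is invoked), then declares the remaining six cases similar. Your monotonicity-in-$k$ reformulation and extremal-$k$ check amount to the same algebra; note incidentally that the restriction $0<i_1\le p_1/2$ in (2.2) is only a symmetry reduction, since the paper uses the bound $\frac{(p_1-2i_1)^2}{4p_1}-\frac{1}{4}\le\frac{1}{5}$, which already holds for every $i_1\ne 0$.
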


\begin{proof}
We prove the cases (1.1) and (2.2), and all other cases are similar.

Proof for the case (1.1):

\begin{align*}
&\frac{(2n+1-2k)^2}{2n+1}<4n-4k+1.\\
\Leftrightarrow \;\;  &  2n+1-4k+\frac{4k^2}{2n+1}<4n-4k+1.\\
\Leftrightarrow \;\;  & \frac{4k^2}{2n+1}<2n.
\end{align*}
The last inequality follows from $0\leq k\leq n$.

\bigskip

Proof for the case (2.2):  Suppose $p_{1}=4$, then $\frac{(p_{1}-2i_1)^2}{4p_1}-\frac{1}{4}\leq 0$ for $i_1\neq 0$. It suffices to show that

\begin{align*}
&\frac{(2n+1-2k)^2}{4(2n+1)}-\frac{1}{4}<n-k.\\
\Leftrightarrow \;\;  & \frac{(2n+1-2k)^2}{2n+1}<4n-4k+1.\\
\Leftrightarrow \;\;  & 2n+1-4k+\frac{4k^2}{2n+1}<4n-4k+1.\\
\Leftrightarrow \;\;  & \frac{4k^2}{2n+1}<2n.
\end{align*}
The last inequality follows from $0\leq k\leq n$.


Suppose $p_{1}=5$, then $\frac{(p_{1}-2i_1)^2}{4p_1}-\frac{1}{4}\leq \frac{1}{5}$ for $i_1\neq 0$. It suffices to show that

\begin{align*}
& \frac{1}{5}+\frac{(2n+1-2k)^2}{4(2n+1)}-\frac{1}{4}<n-k.\\
\Leftrightarrow \;\;  & \frac{(2n+1-2k)^2}{2n+1}<4n-4k+\frac{1}{5}.\\
\Leftrightarrow \;\;  & 2n+1-4k+\frac{4k^2}{2n+1}<4n-4k+\frac{1}{5}.\\
\Leftrightarrow \;\;  & \frac{4k^2}{2n+1}<2n-\frac{4}{5}.\\
\Leftrightarrow \;\;  & 4k^2<4n^{2}+\frac{2}{5}n-\frac{4}{5}.
\end{align*}

The last inequality follows from $0\leq k\leq n$ and $5<2n+1$.
\end{proof}


Suppose $(p_{1}, p_{2})=(2, 2n+2)$ and $n+1$ is not a square. Similar to the previous case, 
in order to prove that  $d(S^{3}_{2, 2n+2}(\mathbb{L}_{n}), (i_1, i_2))<\frac{1}{6}$ for $0\leq i_1\leq 1$ and $0\leq i_2\leq 2n+1$, it suffices to show that
$$\frac{(p_1-2i_1)^2}{4p_1}-\frac{1}{4}+\frac{(2n+2-2k)^2}{4(2n+2)}-\frac{1}{4}-2h(i_{1},k)<\frac{1}{6}$$ for $0\leq k\leq n+1$.

\begin{lemma}\label{lemma:subcase2}
(1.1) If $(i_1, i_2)=(0,k)$, where $0\leq k\leq n$, $n$ is even and $k$ is odd. Then $h(0,k)=\frac{n-k+1}{2}$, and
$$\frac{1}{4}+\frac{(2n+2-2k)^2}{4(2n+2)}-\frac{1}{4}<n-k+1.$$

(1.2) If $(i_1, i_2)=(0,n+1)$. Then $h(0,n+1)=0$, and $$\frac{1}{4}+\frac{(2n+2-2k)^2}{4(2n+2)}-\frac{1}{4}-2h(0, n+1)=0<\frac{1}{6}.$$

(1.3) If $(i_1, i_2)=(i_1,k)$, where $i_1\neq0$, $0\leq k\leq n$, $n$ is even and $k$ is odd. Then $h(i_1,k)=\frac{n-k+1}{2}$, and
$$-\frac{1}{4}+\frac{(2n+2-2k)^2}{4(2n+2)}-\frac{1}{4}<n-k+1.$$

(1.4) If $(i_1, i_2)=(i_1,n+1)$. Then $h(i_1,n+1)=0$, $$-\frac{1}{4}+\frac{(2n+2-2k)^2}{4(2n+2)}-\frac{1}{4}-2h(i_{1}, n+1)<0.$$

(2.1) If $(i_1, i_2)=(0,k)$, where $0\leq k\leq n$, $n$ is even and $k$ is even. Then $h(0,k)=\frac{n-k+2}{2}$, and
$$\frac{1}{4}+\frac{(2n+2-2k)^2}{4(2n+2)}-\frac{1}{4}<n-k+2.$$

(2.2) If $(i_1, i_2)=(i_1,k)$, where $i_1\neq0$, $0\leq k\leq n$, $n$ is even and $k$ is even. Then $h(i_1,k)=\frac{n-k}{2}$, and
$$-\frac{1}{4}+\frac{(2n+2-2k)^2}{4(2n+2)}-\frac{1}{4}<n-k.$$

(3.1) If $(i_1, i_2)=(0,k)$, where $0\leq k\leq n$, $n$ is odd and $k$ is even. Then $h(0,k)=\frac{n-k+1}{2}$, and
$$\frac{1}{4}+\frac{(2n+2-2k)^2}{4(2n+2)}-\frac{1}{4}<n-k+1.$$
If $(i_1, i_2)=(0,n+1)$. Then $h(0,n+1)=0$, and $$\frac{1}{4}+\frac{(2n+2-2k)^2}{4(2n+2)}-\frac{1}{4}-2h(0, n+1)=0<\frac{1}{6}.$$

(3.2) If $(i_1, i_2)=(i_1,k)$, where $i_1\neq0$, $0\leq k\leq n$, $n$ is odd and $k$ is even. Then $h(i_1,k)=\frac{n-k+1}{2}$, and
$$-\frac{1}{4}+\frac{(2n+2-2k)^2}{4(2n+2)}-\frac{1}{4}<n-k+1.$$
If $(i_1, i_2)=(i_1,n+1)$. Then $h(i_1,n+1)=0$, and $$-\frac{1}{4}+\frac{(2n+2-2k)^2}{4(2n+2)}-\frac{1}{4}-2h(i_{1}, n+1)<0.$$

(4.1) If $(i_1, i_2)=(0,k)$, where $0\leq k\leq n$, $n$ is odd and $k$ is odd. Then $h(0,k)=\frac{n-k+2}{2}$, and
$$\frac{1}{4}+\frac{(2n+2-2k)^2}{4(2n+2)}-\frac{1}{4}<n-k+2.$$

(4.2) If $(i_1, i_2)=(i_1,k)$, where $i_1\neq0$, $0\leq k\leq n$, $n$ is odd and $k$ is odd. Then $h(i_1,k)=\frac{n-k}{2}$, and
   $$-\frac{1}{4}+\frac{(2n+2-2k)^2}{4(2n+2)}-\frac{1}{4}<n-k.$$
\end{lemma}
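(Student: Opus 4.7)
The plan is to verify each subcase of the lemma by direct algebraic manipulation, exactly as in the proof of Lemma~\ref{lemma:subcase1}. Each asserted value of $h(i_1,k)$ is read off immediately from the explicit case-by-case formula for the $h$-function of $\mathbb{L}_n$ displayed just above the lemma, so the only work is to verify the stated numerical inequalities.

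First I would dispose of the subcases in which $i_2=n+1$, namely (1.2), (1.4), and the corresponding second halves of (3.1) and (3.2). Here $(2n+2-2(n+1))^2=0$ and $h(i_1,n+1)=0$, so the left-hand side collapses either to $\tfrac{1}{4}-\tfrac{1}{4}=0<\tfrac{1}{6}$ when $i_1=0$, or to $-\tfrac{1}{4}-\tfrac{1}{4}=-\tfrac{1}{2}<0$ when $i_1\neq 0$, which are trivial.

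For the remaining generic subcases I would substitute $p_1=2$, so that $\frac{(p_1-2i_1)^2}{4p_1}-\frac{1}{4}$ equals $+\tfrac{1}{4}$ or $-\tfrac{1}{4}$ according as $i_1=0$ or $i_1=1$, and rewrite $\frac{(2n+2-2k)^2}{4(2n+2)}=\frac{(n+1-k)^2}{2(n+1)}$. Setting $j=n+1-k\in\{1,\dots,n+1\}$ and clearing denominators, every target inequality reduces to one of the elementary forms $j^2<2(n+1)j$, $j^2<2(n+1)(j+1)$, or $j^2<2(n+1)j-(n+1)$. The first two are immediate from $j\leq n+1$; the third is equivalent to $j^2-2(n+1)j+(n+1)<0$, whose roots are $(n+1)\pm\sqrt{n(n+1)}$, and an easy check confirms that the whole range $1\leq j\leq n+1$ lies strictly between these roots whenever $n\geq 1$. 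Following the style of Lemma~\ref{lemma:subcase1}, it suffices to write out one or two representative subcases in full, say (1.1) and (2.2), and remark that the other generic subcases are handled by the same manipulation with the additive constants shifted appropriately.

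The main obstacle is purely bookkeeping across the many subcases; there is no conceptual difficulty. The only point to be tracked carefully is that the weaker bound $<\tfrac{1}{6}$ supplied by Proposition~\ref{proposition: d0}, rather than strict negativity, is needed only in the $i_2=n+1$ subcases (where the left-hand side is exactly $0$); the hypothesis that $n+1$ is not a square guarantees that $|H_1|=4(n+1)$ is not a perfect square, so that the strengthened version of Proposition~\ref{proposition: d0} noted in the remark following it is available.
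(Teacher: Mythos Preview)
Your proposal is correct and follows essentially the same approach as the paper: the paper likewise proves only the representative subcases (1.1) and (2.2) by direct algebraic simplification---reducing (1.1) to $\frac{n+1-k}{2(n+1)}<1$ and (2.2) to $\frac{k^2}{n+1}<n$---and declares all other subcases similar. Your substitution $j=n+1-k$ and organization into the three quadratic forms is a minor stylistic variation on the same direct computation; note only that cases (1.3) and (3.2) actually reduce to the slightly weaker $j^2<2(n+1)j+(n+1)$, which is of course implied by your first form.
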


\begin{proof} We prove the cases (1.1) and (2.2), and all other cases are similar.

Proof of case (1.1): The inequality is equivalent to
\begin{align*}
    &\frac{(2n+2-2k)^2}{4(2n+2)}<n-k+1\\
  \Leftrightarrow \;\;  &   \frac{n+1-k}{2n+2}<1,
\end{align*}
which holds as $0\leq k \leq n$.

Proof of case (2.2): The inequality if equivalent to
\begin{align*}
   &\frac{(n+1-k)^2}{2n+2}<n-k+\frac{1}{2}\\
     \Leftrightarrow \;\;  &      n+1-2k +\frac{k^2}{n+1}<2n-2k+1\\
     \Leftrightarrow \;\;  &\frac{k^2}{n+1}<n,
\end{align*}
which holds as $0\leq k \leq n$.
\end{proof}




\begin{proof}[Proof of Theorem~\ref{Theorem:Main3}]
Suppose $M=S^{3}_{p_1,p_2}(\mathbb{L}_n)$ where $p_1,p_2$ satisfies the given assumptions. It follows from
Lemma~\ref{lemma:subcase1}, Lemma~\ref{lemma:subcase2}, Proposition~\ref{proposition: d0} and Lemma~\ref{lemma:-defcob} that $M$ does not bound a negative-definite 4-manifold.
On the other hand, by Lemma~\ref{lemma:lspace}, $M$ is an L-space. So Theorem~\ref{thm: negdefinite} implies that $M$ admits no symplectic fillable contact structure.
\end{proof}

\begin{remark}
In fact, by Lemma~\ref{lemma:-defcob} and the proof of Theorem~\ref{Theorem:Main3}, if $(r_1, r_2)$-surgery along $\mathbb{L}_n$ yields an L-space, where the rationals $r_1\in[1,2]$, $r_2\in[2n+1,2n+2]$ and $n+1$ is not a square, or $r_1\in[1,4]$ and $r_{2}=2n+1$, then $S^{3}_{r_1, r_2}(\mathbb{L}_n)$ admits no symplectic fillable contact structure.
\end{remark}

\begin{proof}[Proof of Proposition~\ref{proposition:K(5,5)}]

By \cite[Theorem 3.8]{liu}, the two-bridge link $K(a_{1}, a_{2})$ with both $a_{1}$ and $a_{2}$ positive odd integers are L-space links.

By \cite[Example 5.4]{glm}, the $(3,3)$-surgery along $K(5, 5)$ results in an L-space.   So, according to Theorem~\ref{thm: negdefinite}, any symplectic filling of a contact $S^{3}_{3,3}(K(5, 5))$ is negative-definite.

The Alexander polynomial of $K(5, 5)$ is
$$\Delta(K(5, 5))=-t_{1}^{-\frac{1}{2}}t_{2}^{-\frac{3}{2}}+t_{1}^{\frac{1}{2}}t_{2}^{-\frac{3}{2}}-t_{1}^{-\frac{3}{2}}t_{2}^{-\frac{1}{2}}
+t_{1}^{-\frac{1}{2}}t_{2}^{-\frac{1}{2}}-t_{1}^{\frac{1}{2}}t_{2}^{-\frac{1}{2}}+t_{1}^{\frac{3}{2}}t_{2}^{-\frac{1}{2}}$$$$
+t_{1}^{-\frac{3}{2}}t_{2}^{\frac{1}{2}}-t_{1}^{-\frac{1}{2}}t_{2}^{\frac{1}{2}}+t_{1}^{\frac{1}{2}}t_{2}^{\frac{1}{2}}
-t_{1}^{\frac{3}{2}}t_{2}^{\frac{1}{2}}+t_{1}^{-\frac{1}{2}}t_{2}^{\frac{3}{2}}-t_{1}^{\frac{1}{2}}t_{2}^{\frac{3}{2}}.$$

So $$\widetilde{\Delta}(K(5, 5))=-t_{2}^{-1}+t_{1}t_{2}^{-1}-t_{1}^{-1}+1-t_{1}+t_{1}^{2}+t_{1}^{-1}t_{2}-t_{2}+t_{1}t_{2}-t_{1}^{2}t_{2}+t_{2}^{2}
-t_{1}t_{2}^{2}.$$

By direct computation, we find $h(s_1,s_2)=1$ for $(s_1,s_2)=(0,0),$ $(0,1),(0,-1),(1,0)$ or $(-1,0)$; and $h(s_1,s_2)=0$, otherwise.

Now we compute the d-invariants of $d(S^{3}_{3,3}(K(5, 5)),(i_1,i_2))$ for $0\leq i_1, i_2\leq 2$ using Theorem~\ref{thm: d2}.
$$d(S^{3}_{3,3}(K(5, 5)),(0,0))=\frac{1}{2}+\frac{1}{2}-2h(0,0)=-1,$$
$$d(S^{3}_{3,3}(K(5, 5)),(0,1))=\frac{1}{2}+\frac{-1}{6}-2h(0,1)=-\frac{5}{3},$$
$$d(S^{3}_{3,3}(K(5, 5)),(0,2))=\frac{1}{2}+\frac{-1}{6}-2h(0,-1)=-\frac{5}{3},$$
$$d(S^{3}_{3,3}(K(5, 5)),(1,0))=\frac{-1}{6}+\frac{1}{2}-2h(1,0)=-\frac{5}{3},$$
$$d(S^{3}_{3,3}(K(5, 5)),(1,1))=\frac{-1}{6}+\frac{-1}{6}-2h(1,1)=-\frac{1}{3},$$
$$d(S^{3}_{3,3}(K(5, 5)),(1,2))=\frac{-1}{6}+\frac{-1}{6}-2h(1,2)=-\frac{1}{3},$$
$$d(S^{3}_{3,3}(K(5, 5)),(2,0))=\frac{-1}{6}+\frac{1}{2}-2h(-1,0)=-\frac{5}{3},$$
$$d(S^{3}_{3,3}(K(5, 5)),(2,1))=\frac{-1}{6}+\frac{-1}{6}-2h(2,1)=-\frac{1}{3},$$
$$d(S^{3}_{3,3}(K(5, 5)),(2,2))=\frac{-1}{6}+\frac{-1}{6}-2h(2,2)=-\frac{1}{3}.$$

Since all the d-invariants are negative, by Proposition~\ref{proposition: d0}, $S^{3}_{3,3}(K(5, 5))$ does not bound a negative-definite 4-manifold.

Therefore, $S^{3}_{3,3}(K(5, 5))$ admits no symplectic fillable contact structure.
\end{proof}

\subsection{Non-L-space}

\begin{proof}[Proof of Proposition~\ref{proposition:Main5}]
Suppose $(W,\omega)$ is a Stein filling of $(Y,\xi)$, where the underlying Spin$^c$ structure of $\xi$ is $\mathfrak{s}$. By \cite[Theorem 1]{lin}, if $b^{+}_{2}(W)>0$, then $\mathfrak{s}_{\omega}$ is self-conjugate.
Moreover, we have either of the following two cases:

(Type I)\;\; $b^{+}_{2}(W)=1$, $b^{-}_{2}(W)=4d(Y,\mathfrak{s})+9$, and $HF^{+}(Y)=\mathcal{T}^{+}_{d}\oplus \mathbb{F}_{d}$,

(Type II)\;\; $b^{+}_{2}(W)=2$, $b^{-}_{2}(W)=4d(Y,\mathfrak{s})+10$, and $HF^{+}(Y)=\mathcal{T}^{+}_{d}\oplus \mathbb{F}_{d-1}$.


Thus, if $d(Y,\mathfrak{s})<-2.5$, then $b^{+}_{2}(W)=0$; if $d(Y,\mathfrak{s})<-2.25$ and $HF^{+}(Y)=\mathcal{T}^{+}_{d}\oplus \mathbb{F}_{d}$, then $b^{+}_{2}(W)=0$. However, since all correction terms of $Y$ are negative, Proposition~\ref{proposition: d0} implies  $b^{+}_{2}(W)>0$. We arrive at a contradiction, and this proves the first two cases of the proposition.

On the other hand, we can apply \cite[Theorem 7]{lin1} to the spin cobordism $W \setminus B^4$ between $S^3$ and $Y$.  Note that $\alpha(S^3)=\beta(S^3)=\gamma(S^3)=0$.  If $b^{+}_{2}(W)=1$, then $\alpha(Y,\mathfrak{s})\geq \frac{1}{8}(b_2^-(W)-1)=\frac{d(Y,\mathfrak{s})}{2}+1$ since $b^{-}_{2}(W)=4d(Y,\mathfrak{s})+9$ for (Type I) manifolds; if $b^{+}_{2}(W)=2$, then $\alpha(Y,\mathfrak{s})\geq \frac{1}{8}(b_2^-(W)-2)=\frac{d(Y,\mathfrak{s})}{2}+1$ since  $b^{-}_{2}(W)=4d(Y,\mathfrak{s})+10$ for (Type II) manifolds. So we cannot have $b^{+}_{2}(W)>0$ when $\alpha(Y,\mathfrak{s})< \frac{d(Y,\mathfrak{s})}{2}+1$. The only remaining possibility is $b^{+}_{2}(W)=0$.  When all correction terms of $Y$ are negative, this again contradicts to Proposition~\ref{proposition: d0} and proves the last case of the proposition.
\end{proof}


\begin{example}
Let $T$ be a torus with one boundary component, and $$\phi=(xy)^{9}xy^{-a_1}xy^{-a_2}\cdots xy^{-a_n}$$  a self-diffeomorphism of $T$ which fixes the boundary $\partial T$, where $x,y$ are two right handed Dehn twists along a pair of two simple closed curves on $T$ which transversely intersects in a point,  $a_{i}$ is a nonnegative integer and some $a_{i}$ is positive. Let $(T,\phi)$ be the open book decomposition which supports a contact 3-manifold $(M_{T,\phi}, \xi)$.  Let $\mathfrak{s}_\xi$ be the Spin$^c$ structure of $M_{T,\phi}$ to which the contact structure $\xi$ belongs, then $\mathfrak{s}_\xi$ is self-conjugate \cite[Lemma 6.1]{eo}.  By \cite[Theorem 6.2]{b}, $$HF^{+}(M_{T,\phi}, \mathfrak{s}_\xi)\cong HF^{+}(S^{3}_{-1}(T(2,3))\{d\},$$ where $HF^{+}(S^{3}_{-1}(T(2,3))\cong\mathcal{T}^{+}_{0}\oplus \mathbb{F}_{-1}$, and $d=\frac{1}{4}(n+4-\sum\limits_{i=1}^{n}a_{i})$. So $$rk(HF^{+}_{red}(M_{T,\phi}))=1,$$ and the d-invariant $$d(M_{T,\phi}, \mathfrak{s}_\xi)=\frac{1}{4}(n+4-\sum\limits_{i=1}^{n}a_{i}).$$ By Proposition~\ref{proposition:Main5}, if $$\frac{1}{4}(n+4-\sum\limits_{i=1}^{n}a_{i})<-2.5,$$ i.e., $$\sum\limits_{i=1}^{n}a_{i}>n+14,$$ then a Stein filling of a contact structure on  $M_{T,\phi}$ with underlying Spin$^{c}$ structure $\mathfrak{s}_\xi$ must be negative-definite.
\end{example}

\section{Existence of Stein fillable contact structures} \label{sec:existence}

\subsection{Sufficiently large Dehn surgeries along  knots} \label{sec:knot}

In this subsection, we show that the sufficiently large Dehn surgeries along several families of knots bound Stein domains. This gives much evidence for Question~\ref{ques:highsurgery} and Conjecture~\ref{conj:highsurgery}.

First, we consider surgeries along twisted torus knots and their connected sums.

\begin{proposition}\label{proposition:twisttorus}
The $r$-surgeries along twisted torus knots $T(u,um+1;u-1,l)$ and $T(u,um+u-1;u-1,l)$ bound Stein domains if $r$ is sufficiently large, where $u\geq3$, $m\geq1$ and $l\geq1$.
\end{proposition}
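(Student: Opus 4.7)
The plan is to apply Gompf's criterion (Proposition~\ref{char1}) to a Legendrian surgery presentation of $S^3_r(K)$, extending the strategy behind Theorem~\ref{thm:closed3braids} (which handles the 3-braid case $u=3$) to general braid index $u$. The starting point is to realize $K=T(u,um+1;u-1,l)$ as the closure of the positive $u$-braid
\[
\beta = (\sigma_1\sigma_2\cdots\sigma_{u-1})^{um+1}(\sigma_1\sigma_2\cdots\sigma_{u-2})^{(u-1)l},
\]
and analogously for $T(u,um+u-1;u-1,l)$ with $um+u-1$ in place of $um+1$. Drawing this braid closure as a Legendrian front with all positive crossings produces a Legendrian representative of $K$ whose Thurston--Bennequin number equals the total number of crossings of $\beta$ minus $u$. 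For any rational $r$ strictly below this Thurston--Bennequin number, Proposition~\ref{char1} applied directly to this single Legendrian $K$ immediately yields a Stein filling of $S^3_r(K)$, so the real content of the proposition is to handle $r$ beyond this bound.

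To treat $r$ above the Thurston--Bennequin threshold, I would enlarge the Legendrian link by adjoining auxiliary Legendrian components. Concretely, one adds a chain of Legendrian unknots (possibly together with a Legendrian push-off of $K$), each placed to link $K$ or a neighboring strand in a controlled way, and assigns rational surgery coefficients encoding a continued-fraction expansion of $r$ relative to $tb(K)$. After Rolfsen twists and slam-dunk reductions, the combined surgery on the multi-component Legendrian link collapses to a single $r$-surgery on $K$. By stabilizing each auxiliary Legendrian unknot so that its $tb$ is sufficiently negative, every integer entry of the continued-fraction expansion can be arranged to lie strictly below the Thurston--Bennequin number of the corresponding Legendrian component. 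Proposition~\ref{char1} then applies componentwise and certifies that $S^3_r(K)$ bounds a Stein domain.

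The main obstacle is the combinatorial bookkeeping required to realize every sufficiently large rational $r$ as such a continued-fraction decomposition compatible with the Legendrian framing inequalities $r_i<tb(K_i)$ on all components simultaneously; this likely requires a case analysis keyed to the interval in which $r$ lies, together with a careful count of the number of stabilizations needed on each auxiliary unknot. Once the Legendrian diagram is assembled, Stein fillability of $S^3_r(K)$ follows directly from Proposition~\ref{char1}. The parallel family $T(u,um+u-1;u-1,l)$ is treated by the identical argument after substituting the corresponding braid word, which only modifies the crossing count and hence the intrinsic Thurston--Bennequin number of the braid-closure representative.
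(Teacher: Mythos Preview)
Your plan has a genuine gap, and it is not a matter of bookkeeping. Keeping $K$ as a component of the Legendrian link and adjoining meridional chains of unknots cannot produce $r$-surgery on $K$ for $r>tb(K)$ while satisfying Gompf's inequality on every component. Concretely: if $K$ carries coefficient $a<tb(K)$ and $U_1,\dots,U_k$ is a chain of meridional unknots with integer coefficients $c_i$, slam-dunking collapses the picture to surgery on $K$ with coefficient $a-\dfrac{1}{[c_1,c_2,\dots,c_k]}$. Each $U_i$ is an unknot in $S^3$, so $tb(U_i)\le -1$ and the constraint $c_i<tb(U_i)$ forces $c_i\le -2$; the resulting negative continued fraction then lies in $(-\infty,-1]$, so the effective coefficient on $K$ stays in $(a-1,a]\subset(-\infty,tb(K))$. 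No amount of stabilization helps---stabilizing \emph{lowers} $tb$ and therefore tightens, rather than loosens, the inequality $c_i<tb(U_i)$. Allowing rational $c_i$, push-offs, or higher linking does not change the conclusion: the effective framing on $K$ remains bounded above by $tb(K)$, so large $r$ is unreachable this way.

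The paper's argument avoids this obstruction by abandoning the idea of keeping $K$ itself in the diagram and, crucially, by introducing a $1$-handle. One first unwinds the $m$ full twists on $u$ strands and the $l$ full twists on $u-1$ strands by inserting two auxiliary unknots with coefficients $-\tfrac{1}{m}$ and $-\tfrac{1}{l}$; this turns $r$-surgery on $K$ into $\bigl(-\tfrac{1}{m},-\tfrac{1}{l},r'\bigr)$-surgery on a three-component link with $r'=r-mu^{2}-l(u-1)^{2}$. After isotopy, a $0$-framed unknot appears and is traded for a dotted circle, so the attaching curves now sit in $\partial(D^4\cup\text{1-handle})=S^1\times S^2$. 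There the relevant Legendrian components achieve $tb=0$ and $tb=1$, which is impossible for unknots in $S^3$; the coefficients $-\tfrac{1}{m},-\tfrac{1}{l},-\tfrac{1}{r'}\in(-1,0)$ are then strictly below these $tb$ values, and Proposition~\ref{char1} applies. The $1$-handle is the essential ingredient your outline is missing.
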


\begin{figure}[htb]
\begin{overpic}
{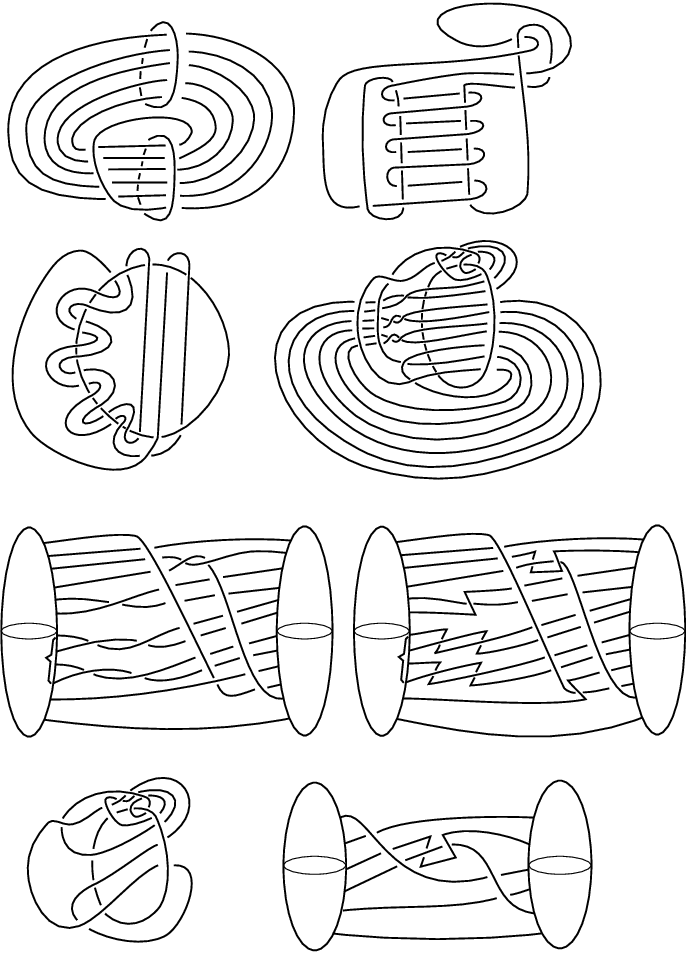}

\put(10, 440){(1)}
\put(280, 440){(2)}

\put(60, 450){$-\frac{1}{l}$}
\put(70, 348){$-\frac{1}{m}$}
\put(0, 420){$r'$}

\put(190, 350){$-\frac{1}{l}$}
\put(260, 380){$-\frac{1}{m}$}
\put(200, 431){$r'$}

\put(10, 330){(3)}
\put(280, 330){(4)}

\put(0, 240){$-\frac{1}{m}$}
\put(50, 285){$-\frac{1}{l}$}
\put(100, 320){$r'$}

\put(10, 210){(5)}
\put(280, 210){(6)}

\put(80, 205){$-\frac{1}{m}$ }
\put(240, 205){$-\frac{1}{m}$ }
\put(78, 118){$-\frac{1}{l}$ }
\put(240, 111){$-\frac{1}{l}$ }

\put(40, 98){$-\frac{1}{r'}$}
\put(210, 96){$-\frac{1}{r'}$}
\put(210, -3){$-\frac{1}{r'}$}

\put(10, 70){(7)}
\put(283, 70){(8)}

\put(180, 70){$-\frac{1}{m}$ }
\put(230, 15){$-\frac{1}{l}$ }

\put(125, 400){$\cdot$ }
\put(127, 400){$\cdot$ }
\put(123, 400){$\cdot$ }

\put(210, 375){$\cdot$ }
\put(210, 373){$\cdot$ }
\put(210, 371){$\cdot$ }

\put(192, 297){$\cdot$ }
\put(192, 301){$\cdot$ }
\put(192, 299){$\cdot$ }

\put(35, 264){$\cdot$ }
\put(35, 268){$\cdot$ }
\put(35, 266){$\cdot$ }

\put(50, 139){$\cdot$ }
\put(50, 142){$\cdot$ }
\put(50, 136){$\cdot$ }

\put(290, 153){$\cdot$ }
\put(290, 150){$\cdot$ }
\put(290, 156){$\cdot$ }
\end{overpic}
\caption{Surgery on twisted torus knot $T(u,um+1;u-1,l)$. Both of the two braces indicate $u-4$ two-strand braids each of which consists of a left-handed full twist.}
\label{figure:bigsur21}
\end{figure}

\begin{proof}
The $r$-surgery along $T(u,um+1;u-1,l)$ is equivalent to the $(-\frac{1}{m}, -\frac{1}{l}, r')$-surgery along the link shown in Figure~\ref{figure:bigsur21}-(1), where $r'=r-mu^2-l(u-1)^2$.  If $r$ is sufficiently large, then so is $r'$. By isotopy, we can transform Figure~\ref{figure:bigsur21}-(1) to Figure~\ref{figure:bigsur21}-(3).

If  $u>3$, we can transform the surgery diagram Figure~\ref{figure:bigsur21}-(3) to the surgery diagram Figure~\ref{figure:bigsur21}-(4), and the 4-dimensional handlebody diagram with a one-handle Figure~\ref{figure:bigsur21}-(5). Furthermore, Figure~\ref{figure:bigsur21}-(5) can be transformed to a Stein handlebody diagram Figure~\ref{figure:bigsur21}-(6).
If $u=3$, we can transform the surgery diagram Figure~\ref{figure:bigsur21}-(3) to the surgery diagram Figure~\ref{figure:bigsur21}-(7), and the Stein handlebody diagram Figure~\ref{figure:bigsur21}-(8).

In each of these two Stein handlebody diagrams, there are three Weinstein two-handles. Their attaching spheres are three Legendrian knots. The Thurston-Bennequin invariants are $0$, $1$ and $0$, respectively, all of which are greater than the corresponding smooth surgery coefficients $-\frac{1}{m}$, $-\frac{1}{l}$ and $-\frac{1}{r'}$, respectively.
So the $r$-surgery along $T(u,um+1;u-1,l)$ bounds a Stein domain.

Similarly, the $r$-surgery along $T(u,um+u-1;u-1,l)$ is equivalent to $(-\frac{1}{m}, -\frac{1}{l}, r-mu^2-l(u-1)^2)$-surgery along the link shown in Figure~\ref{figure:bigsur22}-(1). It is equivalent to $(-\frac{1}{m}, -\frac{1}{l+1}, r')$-surgery along the link shown in Figure~\ref{figure:bigsur22}-(2), where $r'=r-mu^2-(l+1)(u-1)^2$.
By isotopy, we can transform Figure~\ref{figure:bigsur22}-(2) to Figure~\ref{figure:bigsur22}-(6). The latter can be transformed to a Stein handlebody diagram Figure~\ref{figure:bigsur22}-(7).

In this Stein handlebody diagram, there are three Weinstein two-handles. Their attaching spheres are three Legendrian knots. The Thurston-Bennequin invariants are all greater than the corresponding smooth surgery coefficients.
So the $r$-surgery along $T(u,um+u-1;u-1,l)$ bounds a Stein domain.
\end{proof}

\begin{figure}[htb]
\begin{overpic}
{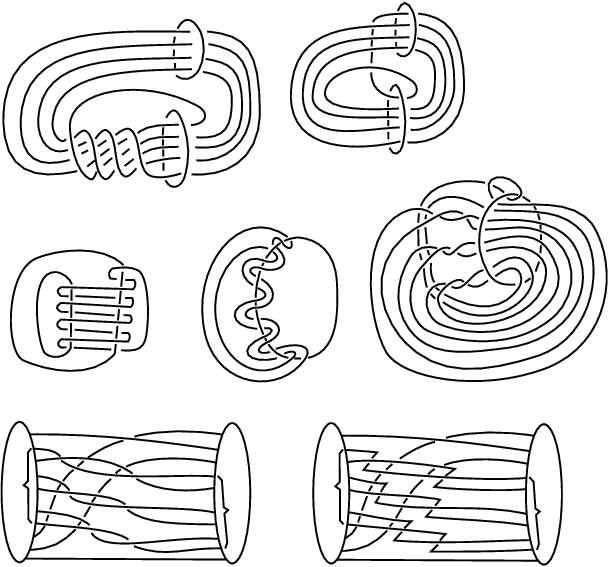}
\put(30, 265){(1)}
\put(85, 265){$-\frac{1}{l}$}
\put(80, 178){$-\frac{1}{m}$ }

\put(160, 265){(2)}
\put(196, 272){$-\frac{1}{l+1}$}
\put(183, 193){$-\frac{1}{m}$ }
\put(140, 250){$r'$}
\put(170, 206){$\cdot$ }
\put(170, 204){$\cdot$ }
\put(170, 202){$\cdot$ }

\put(117, 107){$\cdot$ }
\put(117, 111){$\cdot$ }
\put(117, 109){$\cdot$ }

\put(26, 107){$\cdot$ }
\put(26, 111){$\cdot$ }
\put(26, 109){$\cdot$ }

\put(30, 160){(3)}
\put(0, 155){$-\frac{1}{m}$ }
\put(75, 120){$r'$}

\put(150, 165){(4)}
\put(110, 170){$-\frac{1}{m}$ }
\put(150, 130){$r'$}

\put(280, 180){(5)}
\put(220, 193){$-\frac{1}{m}$ }
\put(220, 146){$\cdot$ }
\put(220, 143){$\cdot$ }
\put(220, 140){$\cdot$ }

\put(30, 70){(6)}
\put(270, 70){(7)}

\put(80, 72){$-\frac{1}{m}$ }

\put(230, 72){$-\frac{1}{m}$ }

\put(260, 180){$r'$}

\put(3, 212){$\cdot$ }
\put(5, 212){$\cdot$ }
\put(7, 212){$\cdot$ }

\put(65, 13){$\cdot$ }
\put(65, 17){$\cdot$ }
\put(65, 15){$\cdot$ }

\put(220, 18){$\cdot$ }
\put(220, 15){$\cdot$ }
\put(220, 12){$\cdot$ }

\put(30, -8){$-\frac{1}{r'}$}
\put(190, -8){$-\frac{1}{r'}$}
\end{overpic}
\caption{Surgery on twisted torus knot $T(u,um+u-1;u-1,l)$. Both of the two braces indicate $u-2$ two-strand braids each of which consists of a left-handed full twist. In the diagrams (3)-(7), the knots without labels all have surgery coefficients $-\frac{1}{l+1}$. }
\label{figure:bigsur22}
\end{figure}

\begin{proposition}\label{proposition:twisttorus1}
Suppose $K_1$, $K_2$, $\cdots$, $K_n$ are twisted torus knots of the form $T(u,um+1;u-1,l)$ or $T(u,um+u-1;u-1,l)$, where $u\geq3$, $m\geq1$ and $l\geq1$. Then $S^{3}_{r}(K_1\# K_2\# \cdots \# K_n)$ bounds a Stein domain for all sufficiently large $r$.
\end{proposition}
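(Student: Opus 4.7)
The plan is to construct a Stein handlebody diagram for $S^{3}_{r}(K_{1}\#\cdots\#K_{n})$ by merging the Stein diagrams of Proposition~\ref{proposition:twisttorus} for the individual summands. Concretely, I would first realize the $r$-surgery on the connected sum as a rational surgery on a link obtained by placing the three-component surgery link from the proof of Proposition~\ref{proposition:twisttorus} inside a disjoint ball for each summand $K_{i}$, and then joining these $n$ pieces together by $n-1$ auxiliary $0$-framed unknots (equivalently, canceling 1--2 handle pairs for each connect-sum band) that realize the connect-sum operation. The surgery coefficient on the ``main'' component in each piece is adjusted so that the total framing encodes $r$; for $r$ sufficiently large, the relevant integer coefficients $r'_{i}$ can each be made arbitrarily large.

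Next, I would apply, on each summand region, the same sequence of Kirby moves used in the proof of Proposition~\ref{proposition:twisttorus}, transforming each piece into the local Stein handlebody form of Figure~\ref{figure:bigsur21}-(6) or Figure~\ref{figure:bigsur22}-(7). Because these moves are supported in disjoint balls, they can be carried out in parallel. The auxiliary $0$-framed connecting unknots either cancel against dotted-circle 1-handles introduced along the way, or are left in place; either way, a routine Legendrian realization places all components into Legendrian standard form inside some $\#_{N}(S^{1}\times S^{2})$.

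Finally, I would verify the Stein condition via Proposition~\ref{char1}: the local surgery coefficients $-\frac{1}{m_{i}}$, $-\frac{1}{l_{i}}$, and $-\frac{1}{r'_{i}}$ remain strictly less than the Thurston-Bennequin invariants of the corresponding Legendrian components, exactly as in the proof of Proposition~\ref{proposition:twisttorus}, while any Legendrian connected sum induced by globalizing the connect-sum operation only increases the relevant Thurston-Bennequin invariants via the formula $tb(L_{1}\#L_{2}) = tb(L_{1})+tb(L_{2})+1$, so all Stein inequalities are preserved. The main obstacle is the Kirby-calculus bookkeeping needed to check that the assembled global diagram genuinely presents $r$-surgery on $K_{1}\#\cdots\#K_{n}$, and that the Legendrian positioning of the connecting components can be arranged without upsetting the local Stein inequalities coming from Proposition~\ref{proposition:twisttorus}; once this is done, the Stein fillability of the surgered 3-manifold follows immediately.
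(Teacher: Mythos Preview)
Your outline points in the right direction but is more roundabout than the paper, and the ``Kirby-calculus bookkeeping'' you flag as the main obstacle is exactly what the paper sidesteps. The paper's proof is essentially one move: each Stein diagram from Proposition~\ref{proposition:twisttorus} contains a single 1-handle, and one merges all $n$ of these into \emph{one} 1-handle, threading every local piece onto it together with a single $-\tfrac{1}{r'}$-framed meridian (Figure~\ref{figure:comb1ha}); reversing the moves of Proposition~\ref{proposition:twisttorus} along the one underlying unknot then directly produces $K_{1}\#\cdots\#K_{n}$ with coefficient $r$. Your scheme---disjoint balls joined by $n-1$ auxiliary $0$-framed unknots, then local moves, then cancellation---could in principle be completed (cancelling each auxiliary unknot against a local 1-handle merges them pairwise, leaving parallel $-\tfrac{1}{r'_i}$ meridians on a single 1-handle, which is already a valid Stein picture), but you have not actually carried this out, and the Legendrian connected-sum $tb$ formula you invoke plays no role since no Legendrian connected sum is ever performed. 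The paper's direct 1-handle merge eliminates the auxiliary components, the splitting of $r$ among separate $r'_i$, and all of the bookkeeping in one stroke.
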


\begin{proof}
As shown in Figures~\ref{figure:bigsur21} and ~\ref{figure:bigsur22}, there is a one-handle in the Stein handlebody diagram for the Stein domain bounded by $S^{3}_{r}(K_i)$ for $i=1,2,\cdots, n$. We combine these one-handles to a single one-handle as shown in Figure~\ref{figure:comb1ha}. According to the proof of Proposition~\ref{proposition:twisttorus}, the resulting Stein handlebody fills $S^{3}_{r}(K_1$ $\# K_2$ $\# \cdots \# K_n)$.
\end{proof}

\begin{figure}[htb]
\begin{overpic}
{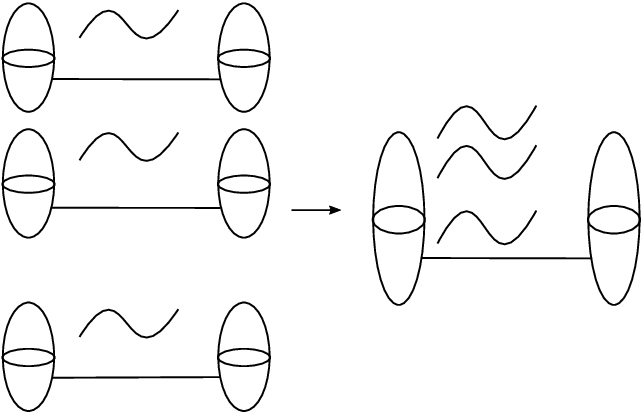}
\put(60, 70){$\vdots$ }
\put(230, 105){$\vdots$ }

\put(250, 60){$-\frac{1}{r'}$ }
\end{overpic}
\caption{Combination of one-handles. The left side consists of $n$ Stein handlebody diagrams as shown in Figures~\ref{figure:bigsur21} and ~\ref{figure:bigsur22}. Each $\sim$ stands for a part of the Stein handlebody diagram. }
\label{figure:comb1ha}
\end{figure}

\begin{proof}[Proof of Theorem~\ref{thm:closed3braids}]
By \cite[Theorem 1]{LeeVa}, a 3-braid L-space knot is either $K_{n,m}$ or $K'_{n,m}$, which corresponds to the twisted torus knot $T(3, 3m+2; 2, n-2)$ or $T(3, 3m+1; 2, n-2)$, respectively.  Thus, the theorem is implied by Proposition~\ref{proposition:twisttorus}
and Proposition~\ref{proposition:twisttorus1}. Moreover, according to the proof of Proposition~\ref{proposition:twisttorus}, the 3-manifold $S^{3}_{r}(K_{n,m})$ (resp. $S^{3}_{r}(K'_{n,m})$) admits a Stein fillable contact structure if $r\geq 9m+4n-4$ (resp. if $r\geq 9m+4n-8$).
\end{proof}

\begin{remark}
Suppose $K$ is a knot in Theorem~\ref{thm:closed3braids}, then Conjecture~\ref{conj:highsurgery} holds for the knot $K$. On the other hand, according to the main result of \cite{eh2}, there exists a Legendrian representative $L$ of $K$ which satisfies the condition of \cite[Theorem 1.1]{ls3} or \cite[Theorem 1.3]{mt}. So one can conclude that  Conjecture~\ref{conj:highsurgery} holds for the knot $K$ via these two theorems.
\end{remark}

\begin{figure}[htb]
\begin{overpic}
{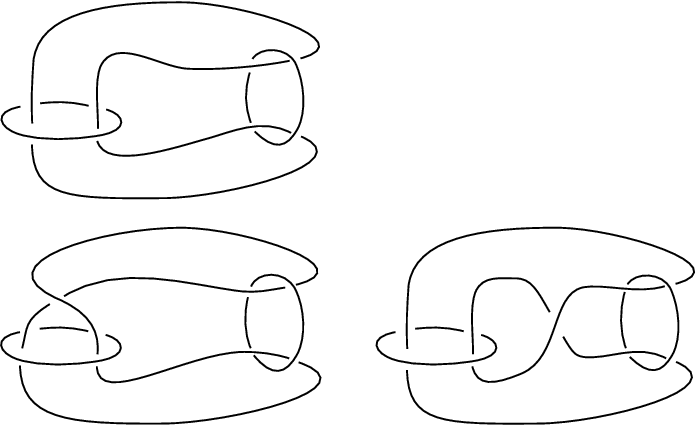}
\put(150, 110){$(1)$ }
\put(0, 162){$\frac{1}{q_1}$ }
\put(148, 160){$\frac{1}{q_2}$ }
\put(10, 190){$r'$ }

\put(150, 0){$(2)$ }
\put(0, 53){$\frac{1}{q_1}$ }
\put(148, 53){$\frac{1}{q_2}$ }
\put(10, 80){$r'$ }

\put(330, 0){$(3)$ }
\put(180, 53){$\frac{1}{q_2}$ }
\put(330, 53){$\frac{1}{q_1}$ }
\put(190, 80){$r'$ }
\end{overpic}
\caption{Surgery on two-bridge knot $K(a_1, a_2)$.  }
\label{figure:2bridge}
\end{figure}

\begin{figure}[htb]
\begin{overpic}
{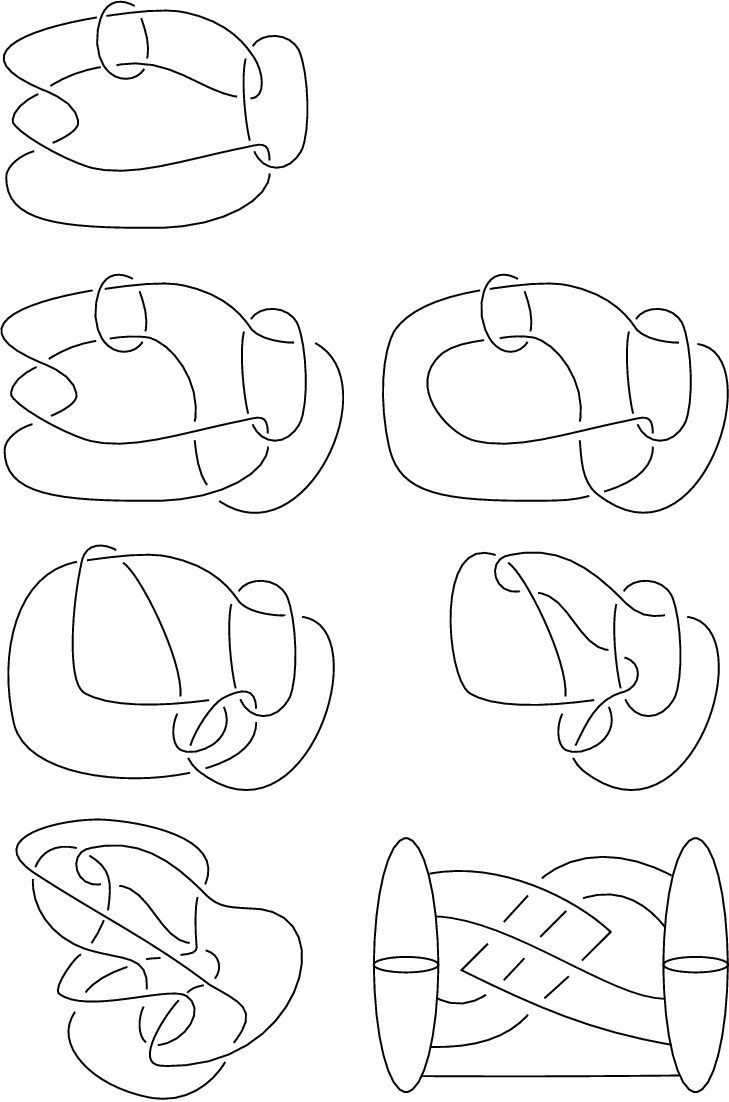}

\put(0, 520){(1)}
\put(62, 482){$\frac{1}{q_{1}}$ }
\put(152, 480){$\frac{1}{q_{2}}$}
\put(120, 420){$r'$ }

\put(0, 390){(2)}
\put(61, 350){$\frac{1}{q_{1}}$ }
\put(145, 385){$\frac{1}{q_{2}}$}
\put(140, 280){$r'$ }

\put(200, 390){(3)}
\put(240, 350){$\frac{1}{q_{1}+1}$ }
\put(330, 385){$\frac{1}{q_{2}+1}$}
\put(330, 280){$r'$ }

\put(0, 260){(4)}
\put(38, 220){$\frac{1}{q_{1}+1}$ }
\put(141, 250){$\frac{1}{q_{2}+1}$}
\put(140, 150){$r'$ }

\put(200, 260){(5)}
\put(220, 220){$\frac{1}{q_{1}+1}$ }
\put(325, 250){$\frac{1}{q_{2}+1}$}
\put(330, 150){$r'$ }

\put(0, 130){(6)}
\put(9, 20){$\frac{1}{q_{2}+1}$ }
\put(0, 80){$\frac{1}{q_{1}+1}$ }
\put(140, 90){$r'$ }

\put(200, 130){(7)}
\put(298, 63){$\frac{1}{q_{2}+1}$ }
\put(228, 120){$\frac{1}{q_{1}+1}$ }
\put(270, -2){$-\frac{1}{r'}$ }
\end{overpic}
\caption{Surgery on two-bridge knot $K(a_1, a_2)$, where $a_1$ is odd and $a_2$ is even.  }
\label{figure:2bridge1}
\end{figure}

Now we consider surgeries along some two-bridge knots.

\begin{proof}[Proof of Theorem~\ref{thm:2bridgeknot}]
Since $K$ is a knot,  $a_1$ and $a_2$ cannot be both odd.

Case 1, both $a_1$ and $a_2$ are even. Let $a_1=-2q_1$ and $a_2=-2q_2$.
The $r$ surgery along $K$ is equivalent to the a surgery along a Borromean ring with coefficients $\frac{1}{q_1}$, $\frac{1}{q_2}$ and $r$. See Figure~\ref{figure:2bridge}-(1). Since $q_i$ is an integer,  $-1\leq\frac{1}{q_i}\leq1$, for $i=1,2$.

If $\frac{1}{q_1}=\frac{1}{q_2}=1$, and $r>0$. It is an $r$-surgery along the right handed trefoil. This case has been treated in Theorem~\ref{thm:torusknots}. If both $\frac{1}{q_1}$ and $\frac{1}{q_2}$ are positive, either $\frac{1}{q_1}$ or $\frac{1}{q_2}$ is less than $1$, and $r>0$, then it bounds a Stein domain with diagram \cite[Figure 49]{g}. If either $\frac{1}{q_1}$ or $\frac{1}{q_2}$ less than $0$, and $r>0$, then it bounds a Stein domain with diagram \cite[Figure 53]{g}.

Case 2, $a_1$ is odd and $a_2$ is even. Let $a_1=-2q_{1}-1$ and $a_2=-2q_{2}$. The $r$ surgery along $K$ is equivalent to the a surgery along a link shown in Figure~\ref{figure:2bridge}-(2) with coefficients $\frac{1}{q_1}$, $\frac{1}{q_2}$ and $r'=r\pm4q_2$, where $\pm$ is determined by the sign of $q_2$.

Suppose both $q_1$ and $q_2$ are nonzero, and $r$ is sufficiently large. Figure~\ref{figure:2bridge}-(2) can be isotoped to Figure~\ref{figure:2bridge1}-(1). We isotope Figure~\ref{figure:2bridge1}-(1) to Figure~\ref{figure:2bridge1}-(2), and then transform to Figure~\ref{figure:2bridge1}-(3). It can be isotoped to Figure~\ref{figure:2bridge1}-(6), and transformed to a Stein handlebody diagram Figure~\ref{figure:2bridge1}-(7).   In this Stein handlebody diagram, both Legendrian knots have Thurston-Bennequin invariants $1$. The smooth surgery coefficient $\frac{1}{q_{i}+1}$ for the Legendrian knot is either $\infty$ or less than $1$, where $i=1,2$. 


If $q_{2}=0$, then $K$ is the unknot, and the $S^{3}_{r}(K)$ bounds a Stein domain for any $r$.

If $q_{1}=0$, then $K$ is a torus knot which is considered in Theorem~\ref{thm:torusknots}.

Case 3, $a_1$ is even and $a_2$ is odd. Let $a_1=-2q_{1}$ and $a_2=-2q_{2}-1$. The $r$-surgery along $K$ is equivalent to the surgery along the link shown in Figure~\ref{figure:2bridge}-(3) with coefficients $\frac{1}{q_1}$, $\frac{1}{q_2}$ and $r'=r\pm4q_2$. By isotopy, this surgery yields the same 3-manifold as that in Figure~\ref{figure:2bridge}-(2). The argument for the previous case applies.
\end{proof}

\begin{remark}
Suppose $K$ is a twist knot $K(-2q_1, -2)$, where $q_1\leq-3$ is an integer. By Theorem~\ref{thm:2bridgeknot}, Conjecture~\ref{conj:highsurgery} holds for $K$. On the other hand, by \cite[Theorem 1.1]{env}, any Legendrian representative $L$ of $K$ satisfies that $tb(L)-rot(L)\leq q_{1}-1$. Moreover, the genus of $K$ is $1$. So $|\tau(K)|=0$ or $1$, and hence $tb(L)-rot(L)<2\tau(K)-1$. So one cannot conclude that Conjecture~\ref{conj:highsurgery} holds for the twist knot $K$ via \cite[Theorem 1.1]{ls3} or \cite[Theorem 1.2]{mt}.
\end{remark}




\begin{figure}[htb]
\begin{overpic}
{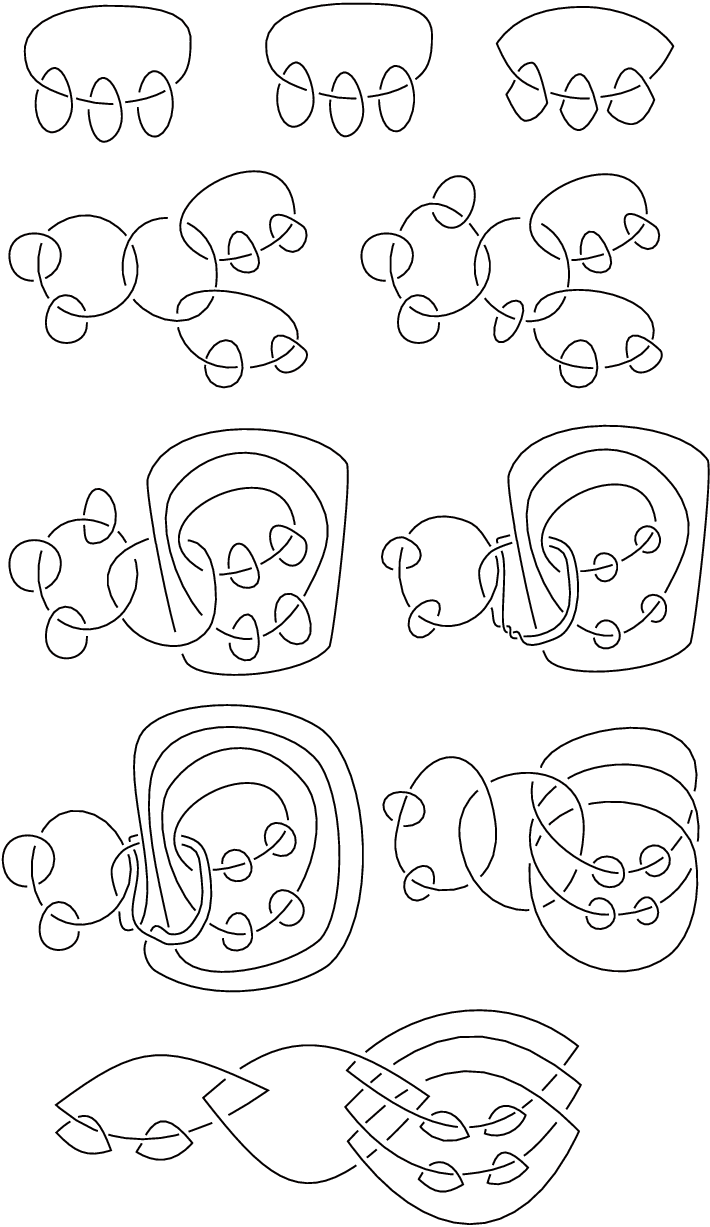}

\put(0, 580){(1)}
\put(50, 580){$1$}
\put(30, 524){$r_1$}
\put(55, 520){$r_2$}
\put(80, 525){$r_3$}

\put(110, 580){(2)}
\put(160, 580){$-2$}
\put(130, 520){$\frac{-r_1}{r_{1}-1}$}
\put(160, 515){$\frac{-r_2}{r_{2}-1}$}
\put(192, 520){$\frac{-r_3}{r_{3}-1}$}

\put(230, 580){(3)}
\put(275, 578){$-2$}
\put(240, 523){$\frac{-r_1}{r_{1}-1}$}
\put(270, 519){$\frac{-r_2}{r_{2}-1}$}
\put(310, 525){$\frac{-r_3}{r_{3}-1}$}

\put(0, 490){(4)}
\put(40, 478){$1$}
\put(120, 498){$1$}
\put(115, 440){$1$}
\put(72, 442){$r'$}
\put(0, 448){$r^{1}_{1}$}
\put(40, 420){$r^{1}_{2}$}
\put(125, 460){$r^{2}_{1}$}
\put(148, 470){$r^{2}_{2}$}
\put(115, 400){$r^{n}_{1}$}
\put(146, 410){$r^{n}_{2}$}
\put(107, 453){$\vdots$}

\put(170, 490){(5)}
\put(277, 455){$\vdots$}
\put(168, 440){$\frac{-r^{1}_1}{r^{1}_{1}-1}$}
\put(190, 410){$\frac{-r^{1}_2}{r^{1}_{2}-1}$}
\put(210, 445){$-2$}
\put(230, 500){$-1$}
\put(240, 475){$2$}
\put(240, 417){$r''$}
\put(280, 495){$-1$}
\put(280, 440){$-1$}
\put(293, 458){$\frac{-r^{2}_1}{r^{2}_{1}-1}$}
\put(320, 475){$\frac{-r^{2}_2}{r^{2}_{2}-1}$}
\put(250, 400){$\frac{-r^{n}_1}{r^{n}_{1}-1}$}
\put(320, 415){$\frac{-r^{n}_2}{r^{n}_{2}-1}$}

\put(0, 360){(6)}
\put(110, 295){$\vdots$}

\put(195, 360){(7)}
\put(282, 300){$\vdots$}
\put(180, 335){$\tilde{r^{1}_1}$}
\put(200, 270){$\tilde{r^{1}_2}$}
\put(285, 330){$\tilde{r^{2}_1}$}
\put(310, 312){$\tilde{r^{2}_2}$}
\put(290, 295){$\tilde{r^{n}_1}$}
\put(310, 280){$\tilde{r^{n}_2}$}
\put(210, 331){$-2$}
\put(224, 310){$2$}
\put(253, 270){$1$}
\put(310, 260){$r''$}
\put(290, 347){$-1$}
\put(290, 364){$-1$}

\put(0, 230){(8)}
\put(110, 155){$\vdots$}

\put(195, 230){(9)}
\put(280, 160){$\vdots$}
\put(180, 213){$\tilde{r^{1}_1}$}
\put(200, 145){$\tilde{r^{1}_2}$}
\put(285, 184){$\tilde{r^{2}_1}$}
\put(309, 188){$\tilde{r^{2}_2}$}
\put(285, 132){$\tilde{r^{n}_1}$}
\put(310, 135){$\tilde{r^{n}_2}$}
\put(310, 115){$r''-1$}
\put(335, 230){$-2$}
\put(335, 195){$-2$}
\put(240, 210){$-2$}
\put(210, 215){$-2$}

\put(0, 70){(10)}
\put(230, 35){$\vdots$}
\put(20, 30){$\tilde{r^{1}_1}$}
\put(70, 18){$\tilde{r^{1}_2}$}
\put(213, 59){$\tilde{r^{2}_1}$}
\put(229, 60){$\tilde{r^{2}_2}$}
\put(216, 8){$\tilde{r^{n}_1}$}
\put(233, 9){$\tilde{r^{n}_2}$}
\put(70, 70){$-2$}
\put(280, 90){$-2$}
\put(280, 70){$-2$}
\put(140, 78){$-2$}
\put(280, 40){$r''-1$}
\end{overpic}
\caption{Surgeries on torus knots and connected sum of torus knots. In (7)-(10), $\tilde{r^{i}_{j}}=\frac{-r^{i}_{j}}{r^{i}_{j}-1}$ for $i=1,2\cdots, n$ and $j=1,2$.}
\label{figure:connec2}
\end{figure}

Next, we consider surgeries along torus knots and their connected sums.
Let $p>q>1$ be two coprime integers, $T(p,q)$ be a positive torus knot, and $T(-p,q)$ be a negative torus knot.

\begin{proof}[Proof of Theorem~\ref{thm:torusknots}]
Case 1, $K=T(p,q)$ is a positive torus knot. By \cite[Lemma 4.4]{OwSt1}, $S^{3}_{r}(T(p,q))$ is a small Seifert fibered space $M(2;-\frac{q^{\ast}}{p}, -\frac{p^{\ast}}{q}, -1-\frac{1}{r-pq}),$  where $qq^{\ast}\equiv 1\pmod p$,  $1\leq q^{\ast}<p$, and $pp^{\ast}\equiv 1\pmod q$, $1\leq p^{\ast}<q$.

We claim that $pq-pp^{\ast}-qq^{\ast}=-1$. Suppose $pp^{\ast}-1=qs$, then $0<s<p$. Since $q(p-s)-1=qp-qs-1=qp-pp^{\ast}$, we have $p|(q(p-s)-1)$. So $q^{\ast}=p-s$ and $pp^{\ast}-1=q(p-q^{\ast})$.

Also, by the classification of Seifert fibered spaces, we have

\begin{align*}
S^{3}_{r}(T(p,q))&=M(1;-\frac{q^{\ast}}{p}, -\frac{p^{\ast}}{q}, -\frac{1}{r-pq})\\
&=M(-1; 1-\frac{q^{\ast}}{p}, 1-\frac{p^{\ast}}{q}, -\frac{1}{r-pq}).
\end{align*}

We denote $r_{1}=\frac{p}{q^{\ast}}$, $r_{2}=\frac{q}{p^{\ast}}$, and $r_{3}=r-pq$, then
\begin{align*}
 S^{3}_{r}(T(p,q))&=M(1;-\frac{1}{r_1}, -\frac{1}{r_2}, -\frac{1}{r_3})\\
 &=M(-1; \frac{r_{1}-1}{r_{1}}, \frac{r_{2}-1}{r_{2}}, -\frac{1}{r_{3}})\\
 &=M(-2; \frac{r_{1}-1}{r_{1}}, \frac{r_{2}-1}{r_{2}}, \frac{r_{3}-1}{r_{3}}).
\end{align*}

See Figure~\ref{figure:connec2}-(1) and Figure~\ref{figure:connec2}-(2).
Note that $r_1, r_2>1$. If $r>pq+1$, then $\frac{-r_i}{r_{i}-1}<-1$ for $i=1,2,3$. So $S^{3}_{r}(K)$ bounds a Stein domain if $r>pq+1$. The Stein domain has a handlebody diagram shown in  Figure~\ref{figure:connec2}-(3).

Case 2, $K=T(-p,q)$ is a negative torus knot. Then
\begin{align*}
S^{3}_{r}(T(-p,q))&=-S^{3}_{-r}(T(p,q))\\
&=-M(2;-\frac{q^{\ast}}{p}, -\frac{p^{\ast}}{q}, -1-\frac{1}{-r-pq})\\
&=M(-2;\frac{q^{\ast}}{p}, \frac{p^{\ast}}{q}, 1-\frac{1}{r+pq})\\
&=M(1; -\frac{p-q^{\ast}}{p}, -\frac{q-p^{\ast}}{q}, -\frac{1}{r+pq}).
\end{align*}

We denote $r_{1}=\frac{p}{p-q^{\ast}}$, $r_{2}=\frac{q}{q-p^{\ast}}$, and $r_{3}=r+pq$. Then
\begin{align*}
S^{3}_{r}(T(-p,q))&=M(1;-\frac{1}{r_1}, -\frac{1}{r_2}, -\frac{1}{r_3})\\
&=M(-1; \frac{r_{1}-1}{r_{1}}, \frac{r_{2}-1}{r_{2}}, -\frac{1}{r_{3}})\\
&=M(-2; \frac{r_{1}-1}{r_{1}}, \frac{r_{2}-1}{r_{2}}, \frac{r_{3}-1}{r_{3}}).
\end{align*}
See Figure~\ref{figure:connec2}-(1) and Figure~\ref{figure:connec2}-(2) again.
Note that $r_1,r_2>1$. If $r>-pq+1$, then $\frac{-r_i}{r_{i}-1}<-1$ for $i=1,2,3$. So $S^{3}_{r}(K)$ bounds a Stein domain if $r>-pq+1$. The Stein domain also has a handlebody diagram, as shown in  Figure~\ref{figure:connec2}-(3).  

Case 3, $K=T(u_1,v_1)\# T(u_2,v_2)\#\cdots
\# T(u_n,v_n)$ is the connected sum of $n$ torus knots. Here we assume that $|u_i|>v_i>1$.

Suppose that $S^{3}_{r}(T(u_i,v_i))$ has a surgery diagram shown in Figure~\ref{figure:connec2}-(1) with surgery coefficients $1$, $r^{i}_1$, $r^{i}_2$ and $r^{i}_3$, where $i=1,2,\cdots, n$. We claim that the $r$-surgery along $K$, $S^{3}_{r}(K)$, has a surgery diagram as shown in Figure~\ref{figure:connec2}-(4), where 
$r'=r-u_{1}v_{1}-u_{2}v_{2}-\cdots-u_{n}v_{n}$. This can be shown as follows. The surgered manifold $S^{3}_{r}(T(p,q))$ has a surgery diagram Figure~\ref{figure:connec2}-(1). If we remove the component in Figure~\ref{figure:connec2}-(1) with surgery coefficient $r_{3}=r-pq$, then the remaining surgery diagram yields the 3-sphere. In fact, using slam-dunk, the remaining surgery diagram is a $(\frac{p}{q^{\ast}},\frac{q-p^{\ast}}{q})$-surgery along the Hopf link. Since $pq-pp^{\ast}-qq^{\ast}=-1$, this surgery yields a 3-sphere. So this operation reduces the surgery diagram in Figure~\ref{figure:connec2}-(1) to a $\hat{r}$-surgery along some knot $\hat{K}\subset S^{3}$, where $\hat{r}$ and $r-pq$ differ by some integer. For homological reasons, $\hat{r}=r$.  By \cite[Theorem 1.3]{nz}, any sufficiently large rational is a characterizing slope for the knot $T(p,q)$. So $\hat{K}=T(p,q)$. The same thing holds for a negative torus knot $T(-p,q)$. Therefore, the surgery diagrams consist of the three-component chain links with surgery coefficients $1$, $r^{i}_1$ and $r^{i}_2$ in Figure~\ref{figure:connec2}-(4) all yield the 3-sphere, and these operations together reduce Figure~\ref{figure:connec2}-(4) to the $r$-surgery along the composite knot $T(u_1,v_1)\# T(u_2,v_2)\#\cdots
\# T(u_n,v_n)$.

Recall that $M(-2; \frac{r_{1}-1}{r_{1}}, \frac{r_{2}-1}{r_{2}}, \frac{r_{3}-1}{r_{3}})=M(-1; \frac{r_{1}-1}{r_{1}}, \frac{r_{2}-1}{r_{2}}, -\frac{1}{r_{3}})$. Let $r''$ satisfy $r'=2-\frac{1}{r''}$.  Figure~\ref{figure:connec2}-(4) can be transformed to Figure~\ref{figure:connec2}-(5) by two slam-dunks. If $r$ is sufficiently large, then $r''<0$. We isotope Figure~\ref{figure:connec2}-(5) to Figure~\ref{figure:connec2}-(6). We slide a two-handle with framing $-1$ along a two-handle with framing $2$, and get Figure~\ref{figure:connec2}-(7). We isotope Figure~\ref{figure:connec2}-(7) to Figure~\ref{figure:connec2}-(8). We blow down the two-handle with framing $1$, and obtain Figure~\ref{figure:connec2}-(9).  This can be transformed to a Stein handlebody diagram as shown in Figure~\ref{figure:connec2}-(10). In this Stein handlebody diagram, the attaching Legendrian knot of each Weinstein two-handle has Thurston-Bennequin invariant $-1$, and the smooth surgery coefficients are all smaller than $-1$. So $S^{3}_{r}(K)$ bounds a Stein domain.
\end{proof}

At last, we consider surgeries along some pretzel knots.

\begin{proof}[Proof of Theorem~\ref{thm:pretzel}]
The $r$-surgery along $P(-2l-1,2m+1,2n+1)$ is equivalent to $(\frac{1}{l}, -\frac{1}{m+1}, -\frac{1}{n}, r)$-surgery along the link shown in Figure~\ref{figure:P(-2l-1)}-(1). We isotope Figure~\ref{figure:P(-2l-1)}-(1) to Figure~\ref{figure:P(-2l-1)}-(4), and represent it by the boundary of a handlebody in Figure~\ref{figure:P(-2l-1)}-(7). We transform the handlebody to a Stein handlebody in Figure~\ref{figure:P(-2l-1)}-(8). In the Stein handlebody diagram, both the Thurston-Bennequin invariants are $0$, and $ -\frac{1}{m+1}, -\frac{1}{n}<0$.
\end{proof}

\begin{figure}[htb]
\begin{overpic}
{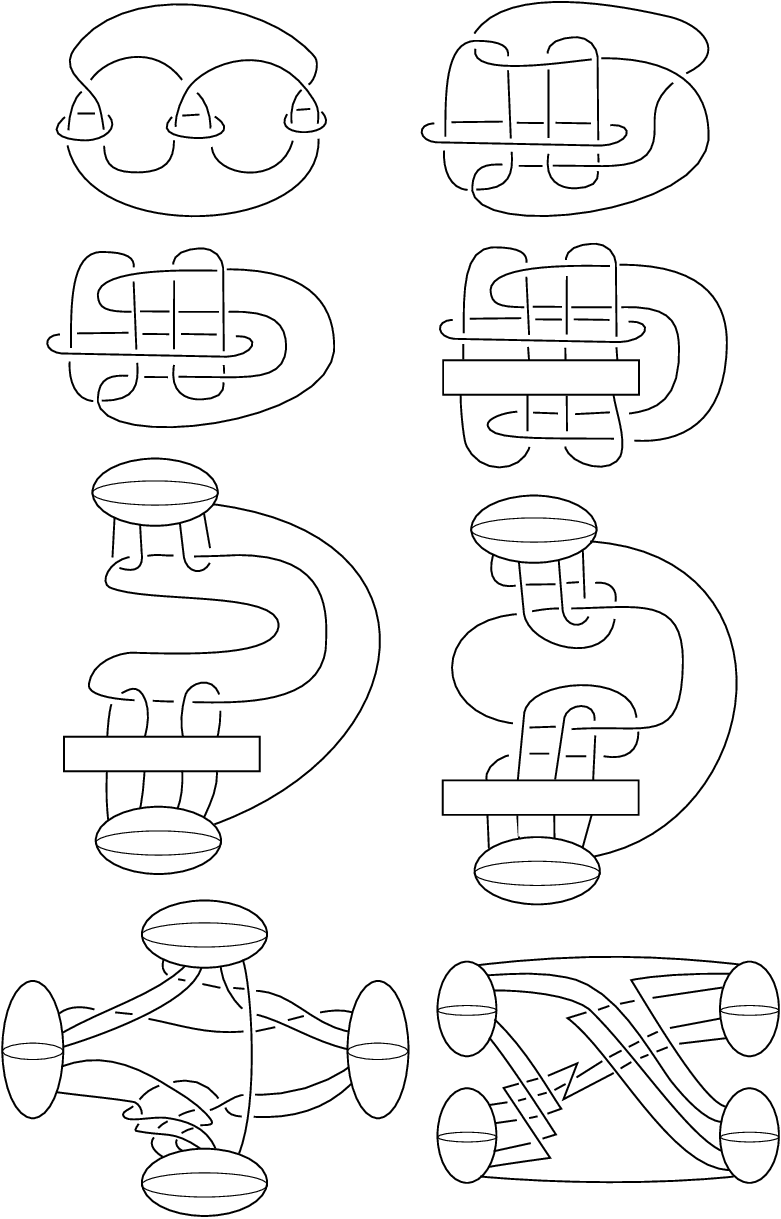}

\put(0, 580){(1)}
\put(130, 580){$r$ }
\put(-2, 525){$-\frac{1}{m+1}$ }
\put(62, 525){$-\frac{1}{n}$ }
\put(160, 525){$\frac{1}{l}$ }

\put(200, 580){(2)}
\put(330, 580){$r$ }
\put(185, 546){$-\frac{1}{m+1}$ }
\put(290, 544){$-\frac{1}{n}$ }
\put(195, 520){$\frac{1}{l}$ }

\put(0, 460){(3)}
\put(150, 420){$r$ }
\put(15, 420){$\frac{1}{l}$ }
\put(15, 470){$-\frac{1}{m+1}$ }
\put(100, 466){$-\frac{1}{n}$ }

\put(200, 460){(4)}
\put(340, 420){$r$ }
\put(205, 430){$\frac{1}{l}$ }
\put(200, 360){$-\frac{1}{m+1}$ }
\put(295, 360){$-\frac{1}{n}$ }

\put(0, 330){(5)}
\put(100, 327){$-\frac{1}{n}$ }
\put(25, 325){$-\frac{1}{m+1}$ }
\put(145, 280){$r$ }
\put(142, 200){$-l$ }

\put(200, 330){(6)}
\put(210, 300){$-\frac{1}{m+1}$ }
\put(320, 260){$r$ }
\put(320, 180){$-l$ }
\put(282, 183){$-\frac{1}{n}$ }

\put(0, 130){(7)}
\put(150, 106){$-\frac{1}{r}$ }
\put(128, 72){$-\frac{1}{m+1}$ }
\put(150, 47){$-\frac{1}{n}$ }
\put(120, 35){$-l$ }

\put(200, 130){(8)}
\put(320, 8){$-l$ }
\put(320, 132){$-\frac{1}{r}$ }
\put(330, 70){$-\frac{1}{m+1}$ }
\put(305, 33){$-\frac{1}{n}$ }

\end{overpic}
\caption{Surgery on the pretzel knot $P(-2l-1,2m+1,2n+1)$. Each box represents a left-handed half twist.  }
\label{figure:P(-2l-1)}
\end{figure}


\subsection{Stein fillable coefficients}
Question~\ref{ques:highsurgery} is equivalent to the following question.

\begin{question}
Suppose $K$ is a knot in $S^3$, is $Sfc(K)<\infty?$
\end{question}

Obviously, the Stein fillable  coefficient of the unknot is its maximal Thurston-Bennequin invariant $-1$.

Suppose $K=T(p,q)$ is a positive torus knot, where $p>q>1$.  Consider the continued fraction expansion
\begin{equation*}
\frac{p}{q}=c_1+\cfrac{1}{c_2+\cfrac{1}{\ddots\cfrac{\ddots}{c_{n-1}+\cfrac{1}{c_n}}}},
\end{equation*}
where $c_{i}\geq2$.
Then \cite[Theorem 2]{OwSt1} and the last paragraph of \cite{OwSt1} imply that $$Sfc(K)=m(T(p,q))=\begin{cases}
pq-\frac{q}{p^{\ast}}& \text{if}~n~ \text{is even},\\
pq-\frac{p}{q^{\ast}}& \text{if}~n~ \text{is odd}.
\end{cases}$$


\begin{proof}[Proof of Proposition~\ref{sfcandm}]
If $r$ is a positive rational and $r\geq Sfc(K)$, then $S^{3}_{r}(K)$ is an L-space and bounds a Stein domain $W$. By Theorem~\ref{thm: negdefinite}, $W$ must be negative-definite. So  $r\geq m(K)$. Therefore, $Sfc(K)\geq m(K)$.
\end{proof}

\begin{example}\label{example:torusknot}
If $K$ is a negative torus knot $T(-p,q)$, then $Sfc(K)=-pq$.
\end{example}

\begin{proof}

In \cite{LeLi}, Lecuona and Lisca classified all Seifert fibered spaces that are Stein fillable. In particular, we apply their result to the manifold $$S^{3}_{r}(T(-p,q))=M(-2;\frac{q^{\ast}}{p}, \frac{p^{\ast}}{q}, 1-\frac{1}{r+pq}).$$ 


If $r\in (-pq+1, \infty)$, then $1-\frac{1}{r+pq}\in(0,1)$; \cite[Theorem 1.5] {LeLi} implies that $S^{3}_{r}(T(-p,q))$ bounds a Stein domain. If $r= -pq+1$, then $S^{3}_{r}(T(-p,q))$ is a lens space which bounds a Stein domain. If $r\in(-pq, -pq+1)$, then $1-\frac{1}{r+pq}<0$; again, \cite[Theorem 1.5] {LeLi} implies that $S^{3}_{r}(T(-p,q))$ bounds a Stein domain.  If $r=-pq$, then $S^{3}_{r}(T(-p,q))$ is a connected sum of two lens spaces which bounds a Stein domain. So $Sfc(T(-p,q))\leq -pq$, which is the maximal Thurston-Bennequin invariant of $T(-p,q)$. By definition, $Sfc(T(-p,q))=-pq$.
\end{proof}

\begin{example}
Let $K$ be the two-bridge knot $K(-2q_{1}, -2q_{2})$, where $q_{1}, q_{2}>0$, then $Sfc(K(-2q_{1}, -2q_{2}))=1$.
\end{example}

\begin{proof}
It follows from the proof of  Proposition~\ref{thm:2bridgeknot} that $S^{3}_{r}(K)$ bounds a Stein domain for $r\geq1$. On the other hand, it is easy to find a Legendrian representative of $K$ with Thurston-Bennequin invariant $1$. Moreover, the genus of $K$ is $1$. So the maximal Thurston-Bennequin invariant $TB(K(-2q_{1}, -2q_{2}))=1$. Thus $Sfc(K(-2q_{1}, -2q_{2}))=1$.
\end{proof}

\subsection{Sufficiently large Dehn surgeries along links} \label{sec:link}
In this subsection, we give some evidence for Question~\ref{ques:link}.

Any surgery along the Hopf link yields a lens space which is known to bound a Stein domain.
According to \cite[Figure 52]{g}, the $(r_1,r_2,r_3)$-surgery along the Borromean ring bounds a Stein domain if $r_1\geq 1$, $r_2>0$ and $r_3\geq 4$.  Using \cite[Figures 49 and 52]{g}, one can show that the $(r_1,r_2)$-surgery along the Whitehead link (and its mirror image) bounds a Stein domain if $r_1, r_2$ are sufficiently large. This is because it is equivalent to some surgery along the Borromean ring.

\begin{proof}[Proof of Theorem~\ref{thm:biglink}]
The $(r_1,r_2)$-surgery along $\mathbb{L}_n$ is equivalent to the $(r_1, r'_{2}, -\frac{1}{n+1})$-surgery along the link in Figure~\ref{figure:BigLn}-(1), where $r'_{2}=r_{2}-4n-4$. We isotope Figure~\ref{figure:BigLn}-(1) to Figure~\ref{figure:BigLn}-(5). It bounds a 4-dimensional handlebody diagram Figure~\ref{figure:BigLn}-(6).  We isotope Figure~\ref{figure:BigLn}-(6) to Figure~\ref{figure:BigLn}-(7). By wrapping one strand around the upper left attaching ball,  we can transform Figure~\ref{figure:BigLn}-(7) to a Stein handlebody diagram Figure~\ref{figure:BigLn}-(8).  The Thurston-Bennequin invariant is $0$, and the smooth surgery coefficient is $-\frac{1}{n+1}<0$.
\end{proof}

\begin{figure}[htb]
\begin{overpic}
{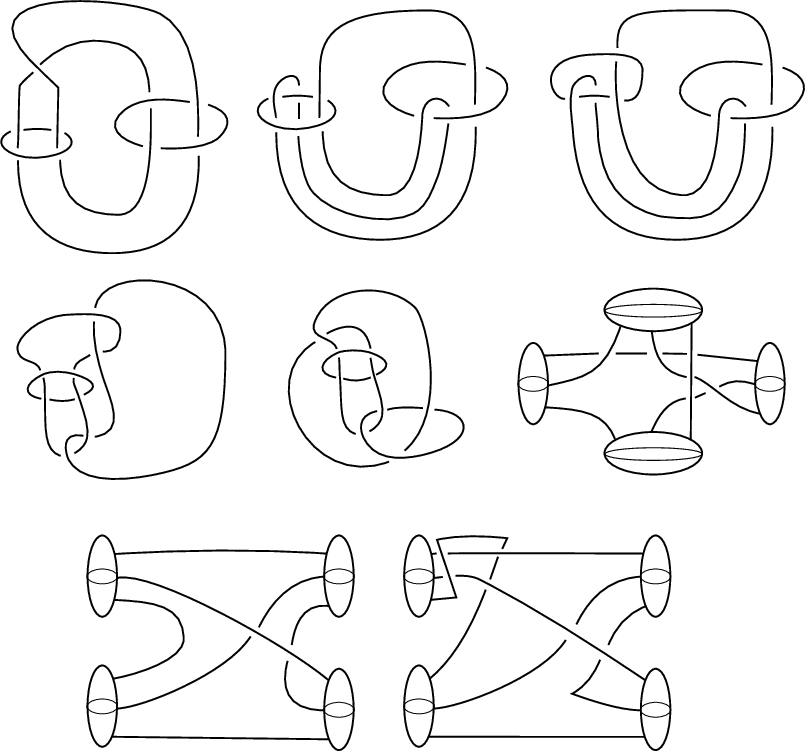}
\put(0, 360){(1)}
\put(150, 360){(2)}
\put(290, 360){(3)}
\put(90, 250){$r_{2}'$ }
\put(50, 314){$r_{1}$ }
\put(-27, 295){$-\frac{1}{n+1}$}

\put(0, 210){(4)}
\put(135, 210){(5)}
\put(272, 210){(6)}
\put(218, 250){$r_{2}'$ }
\put(180, 332){$r_{1}$ }

\put(360, 250){$r_{2}'$ }
\put(320, 330){$r_{1}$ }

\put(96, 132){$r_{2}'$ }
\put(0, 170){$r_{1}$ }

\put(150, 172){$r_{1}$ }
\put(220, 139){$r_{2}'$ }

\put(345, 200){$-\frac{1}{r_{1}}$ }
\put(333, 158){$-\frac{1}{r_{2}'}$ }

\put(120, 105){$-\frac{1}{r_{1}}$ }
\put(90, 17){$-\frac{1}{r_{2}'}$ }

\put(280, 105){$-\frac{1}{r_{1}}$ }
\put(240, 17){$-\frac{1}{r_{2}'}$ }
\put(188, 50){$-\frac{1}{n+1}$}

\put(24, 100){(7)}
\put(330, 100){(8)}
\end{overpic}
\caption{Surgery on the link $\mathbb{L}_n$. In diagrams (2)-(7), the knots without labels all have surgery coefficients $-\frac{1}{n+1}$. }
\label{figure:BigLn}
\end{figure}

\begin{proof}[Proof of Theorem~\ref{thm:biglink1}]

We assume that $a_{i}=2q_{i}+1$ for $i=1,2$. Then the $(r_1, r_2)$-surgery along $\mathbb{L}$ is equivalent to the surgery diagram in Figure~\ref{figure:BigLn1}-(1), where $r'_{i}=r_{i}-q_{1}-q_{2}$ for $i=1,2$. By isotopy, we obtain the surgery diagram in Figure~\ref{figure:BigLn1}-(2).

If either $q_1$ or $q_2$ is $-1$ or $0$, then the $(r_1, r_2)$-surgery along $\mathbb{L}$ yields a lens space, a connected sum of two lens space, or a small Seifert fibered space. The former two cases certainly admit Stein fillable contact structures. By the main result in \cite{LeLi}, such small Seifert fibered spaces admit Stein fillable contact structures.

Suppose both $q_1$ and $q_2$ are smaller than $-1$.  Performing isotopy and slam-dunk, we obtain the surgery diagram in Figure~\ref{figure:BigLn1}-(4), where $2-\frac{1}{r_{1}''}=r_{1}'$. Since $r_1$ is sufficiently large, $r_{1}''<0$. Similar to the proof of Theorem~\ref{thm:torusknots}, we slide a 2-handle with framing $-1$ to the 2-handle with framing $2$, and by blowing down a resulting 2-handle with framing $1$, we obtain the surgery diagram in Figure~\ref{figure:BigLn1}-(5). By slam-dunk, we obtain the surgery diagram in Figure~\ref{figure:BigLn1}-(6), where where $2-\frac{1}{r_{2}''}=r_{2}'$ and $r_{2}''<0$. Using the same operation as before, we obtain the surgery diagram in Figure~\ref{figure:BigLn1}-(7). It can be adapted to a Stein handlebody diagram as in Figure~\ref{figure:BigLn1}-(8).

Now we consider the remaining cases. Suppose for example $q_{1}>0$.  Let $-\frac{1}{q_1}=-1-\frac{1}{-\frac{q_1}{q_{1}-1}}$. Repeating the previous argument with a slight modification, we can obtain a Stein handlebody diagram.  In this case, the surgery diagram in Figure~\ref{figure:BigLn1}-(3) has different coefficients. The left unknot with coefficient $0$ should have coefficient $-1$. The unknot with coefficient $q_1$ should have coefficient $-\frac{q_1}{q_{1}-1}$. The coefficients in other surgery diagrams are modified accordingly.
\end{proof}

\begin{figure}[htb]
\begin{overpic}
{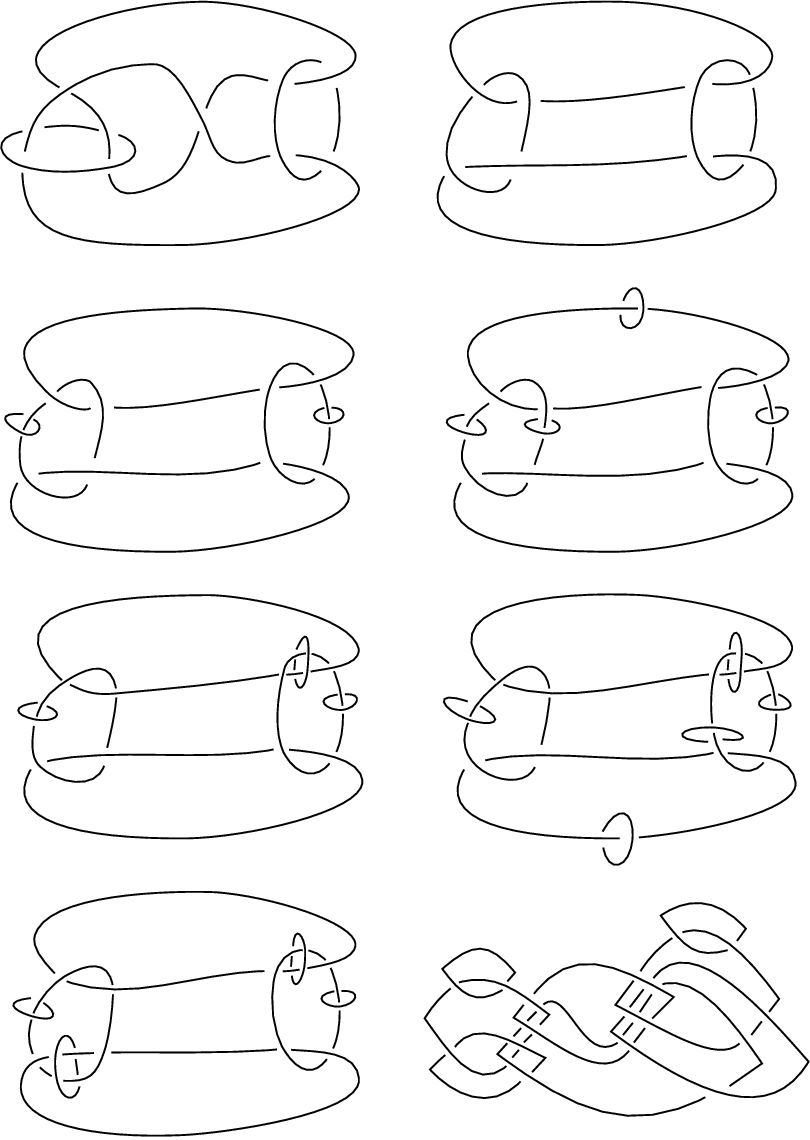}
\put(0, 540){(1)}
\put(150, 540){$r_{1}'$ }
\put(150, 430){$r_{2}'$ }
\put(-7, 493){$-\frac{1}{q_1}$}
\put(165, 490){$-\frac{1}{q_2}$}

\put(210, 540){(2)}
\put(354, 540){$r_{1}'$ }
\put(350, 430){$r_{2}'$ }
\put(200, 494){$-\frac{1}{q_1}$}
\put(365, 490){$-\frac{1}{q_2}$}

\put(0, 400){(3)}
\put(40, 370){$0$}
\put(140, 305){$0$}
\put(0, 355){$q_1$ }
\put(170, 350){$q_2$ }
\put(150, 288){$r_{2}'$ }
\put(150, 395){$r_{1}'$ }

\put(210, 400){(4)}
\put(250, 370){$-1$}
\put(350, 305){$0$}
\put(270, 335){$-1$}
\put(210, 353){$q_1$ }
\put(382, 350){$q_2$ }
\put(360, 288){$r_{2}'$ }
\put(310, 410){$r_{1}''$ }
\put(360, 395){$2$ }

\put(0, 260){(5)}
\put(-1, 210){$q_1$ }
\put(174, 210){$q_2$ }
\put(110, 240){$r_{1}''-1$ }
\put(150, 255){$-2$ }
\put(50, 230){$-1$ }
\put(140, 167){$-1$ }
\put(150, 145){$r_{2}'$ }

\put(210, 260){(6)}
\put(250, 230){$-1$}
\put(350, 168){$-2$}
\put(314, 192){$-1$}
\put(320, 240){$r_{1}''-1$ }
\put(302, 130){$r_{2}''$ }
\put(210, 217){$q_1$ }
\put(383, 210){$q_2$ }
\put(360, 145){$2$ }
\put(360, 255){$-2$ }

\put(0, 110){(7)}
\put(0, 70){$q_1$ }
\put(172, 70){$q_2$ }
\put(110, 95){$r''_{1}-1$}
\put(40, 15){$r''_{2}-1$}
\put(150, 4){$-2$}
\put(150, 110){$-2$}
\put(48, 83){$-2$}
\put(140, 25){$-2$}

\put(210, 110){(8)}
\put(240, 92){$q_1$ }
\put(350, 113){$q_2$ }
\put(360, 10){$r''_{1}-1$}
\put(210, 10){$r''_{2}-1$}
\put(280, 0){$-2$}
\put(188, 50){$-2$}
\put(375, 70){$-2$}
\put(280, 88){$-2$}

\end{overpic}
\caption{Surgery on the link $\mathbb{L}$.  }
\label{figure:BigLn1}
\end{figure}

If the answer to Question~\ref{ques:link} is true, then by Proposition~\ref{char1} the $(r_{1}, r_{2},\cdots, r_{n})$-surgery along a link $K_{1}\cup K_{2}\cdots\cup K_{n}\subset S^3$ yields a 3-manifold which admits a Stein fillable contact structure as long as each $r_{i}$ is either sufficiently large or sufficiently small (or negative), where $i=1,2,\cdots,n$.

However, if there are some $r_{i}$ that are neither sufficiently large nor sufficiently negative, then the surgery may does not exist a symplectic fillable contact structure. For example,  for any integer $n\geq2$, the $(-2n+3, -\frac{1}{n})$-surgery along the link shown in Figure~\ref{figure:plink1} does not admit a symplectic fillable contact structure, since it is equivalent to the $(2n+3)$-surgery along the pretzel knot $P(-2,3,2n+1)$ and Theorem~\ref{Theorem:Main1} applies. Here one coefficient can be arbitrarily negative, while the other coefficient lies in the interval $(-1,0)$.

\begin{figure}[htb]
\begin{overpic}
{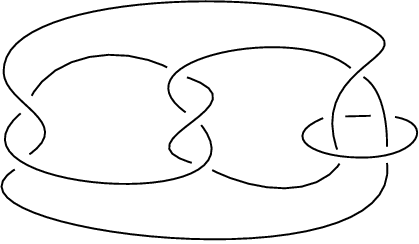}
\put(170, 110){$-2n+3$ }
\put(200, 60){$-\frac{1}{n}$}

\end{overpic}
\caption{Surgery on a link.  }
\label{figure:plink1}
\end{figure}

\end{document}